\documentclass[a4paper,11pt]{article}

\usepackage{authblk}
\usepackage[nointlimits]{amsmath}
\usepackage{amsfonts}
\usepackage{amssymb}
\usepackage{amsmath}
\usepackage{amsthm} 
\usepackage{mathrsfs}
\usepackage{mathtools}
\usepackage{bbm}
\usepackage{cancel}
\usepackage{comment}
\usepackage{hyperref}
\usepackage[capitalise]{cleveref}
\usepackage{latexsym}
\usepackage{graphicx}
\usepackage{layout}
\usepackage[english]{babel}
\usepackage{fancyhdr}

\usepackage{lscape} 

\usepackage{amscd}
\usepackage{color}

\usepackage{tablefootnote}

\newtheorem{lemma}     {Lemma}[section]
\newtheorem{thm}   [lemma]{Theorem}
\newtheorem{teorema1}   [lemma]{Theorem}

\newtheorem{coro}    [lemma]{Corollary}
\newtheorem{cong1}      [lemma]{Conjecture}
\newtheorem{remark1}    [lemma]{Remark}
\newtheorem{defin}     [lemma]{Definition}
\newtheorem{assmp}     [lemma]{Assumption}

\newenvironment{Remark}[1][]
{\begin{remark1}[#1]\begin{samepage}}{\end{samepage}\end{remark1}}

\numberwithin{equation}{section}

\newcommand{\R}{\mathbb R}

\newcommand{\dis}{\displaystyle}

\newcommand{\mmmintone}[1]{{\dis{\int\kern -.43cm
-}}_{\kern-.21cm\substack{#1}}\;\;}
\newcommand{\mmmintwo}[2]{{\dis{\int\kern -.43cm
-}}_{\kern-.21cm\substack{#1}}^{\substack{#2}}\;\;}
\newcommand{\submint}{{\scriptstyle{\int\kern -.66em -}}}
\newcommand{\submintone}[1]{{\scriptstyle{\int\kern -.66em
-}}_{\scriptscriptstyle{\kern-.21em\substack{#1}}}}
\newcommand{\fracmint}{{\textstyle{\int\kern -.88em -}}}
\newcommand{\fracmintone}[1]{{\textstyle{\int\kern -.88em
-}}_{\scriptscriptstyle{\kern-.21em\substack{#1}}}\;}

\newcommand{\la}{\lambda}

\newcommand{\nada}[1]{}

\DeclareFontFamily{U}{BOONDOX-calo}{\skewchar\font=45 }
\DeclareFontShape{U}{BOONDOX-calo}{m}{n}{
<-> s*[1.05] BOONDOX-r-calo}{}
\DeclareFontShape{U}{BOONDOX-calo}{b}{n}{
<-> s*[1.05] BOONDOX-b-calo}{}
\DeclareMathAlphabet{\mathcalboondox}{U}{BOONDOX-calo}{m}{n}
\SetMathAlphabet{\mathcalboondox}{bold}{U}{BOONDOX-calo}{b}{n}
\DeclareMathAlphabet{\mathbcalboondox}{U}{BOONDOX-calo}{b}{n}

\usepackage{bbm}

\usepackage{setspace}
\usepackage{mathrsfs} 
\usepackage{amsmath}
\usepackage{graphicx}
\usepackage{multirow}
\usepackage{eurosym}
\usepackage{amstext}
\usepackage{hyperref}
\usepackage{grffile} 
\usepackage{color}
\usepackage{mathabx}
\usepackage{float}
\usepackage{pdfpages}
\usepackage{pifont}
\usepackage{multirow}
\usepackage[dvipsnames]{xcolor}

\usepackage[linewidth=1pt]{mdframed}
\usepackage{lipsum}

\usepackage[T1]{fontenc}
\usepackage[utf8]{inputenc} 
\usepackage{lmodern}
\usepackage{booktabs}
\usepackage{makecell}

\usepackage[normalem]{ulem}

\newcommand{\kh}[1]{{\color{teal} #1}}

\newcommand{\ch}[1]{{\color{blue} #1}}

\title{
Degree-preserving conservative processes and a unified approach for their hydrodynamics 
}

\date{}

\author[1]{Chiara Franceschini}
\author[2]{Patr\'{i}cia Gon\c{c}alves}
\author[3]{Kohei Hayashi}
\author[4]{Makiko Sasada}
\affil[1]{Dipartimento di Scienze Fisiche, Informatiche e Matematiche, Universit\`{a} degli Studi di Modena e
Reggio Emilia}
\affil[2]{Departamento de Matem\'atica, Instituto Superior T\'ecnico, Universidade de Lisboa}
\affil[3]{Department of Mathematics, Graduate School of Science, The University of Osaka}
\affil[3]{RIKEN Center for Interdisciplinary Theoretical and Mathematical Sciences~(iTHEMS)} 
\affil[4]{Graduate School of Mathematical Sciences, The University of Tokyo}
\affil[4]{RIKEN Center for Advanced Intelligence Project~(AIP)}

\begin{document}

\maketitle

\begin{abstract} 
We investigate a broad class of large-scale one-dimensional interacting systems characterized by a single conservation law and satisfying the "degree-preserving property". Under mild and natural assumptions, we establish a unified framework for the analysis of both invariant measures and hydrodynamic limits. In particular, we prove that when the generator preserves the degree of polynomials of the state variables up to order two, the marginals of any product invariant measure must belong to a family of six specific distributions. This classification is shown to be consistent with a classical result on univariate natural exponential families due to Morris~\cite{morris1982natural}, which we apply here for the first time in the context of microscopic stochastic systems. As a consequence, we construct a new interacting particle system whose invariant measure is given by the generalized hyperbolic secant distribution. 
Furthermore, we prove that, despite the generality of the dynamics, the macroscopic behavior of all models in this class is governed by the classical heat equation, with a diffusion coefficient depending explicitly on the underlying microscopic interactions. 
\end{abstract}

\section{Introduction}
A major question in statistical mechanics is to understand the large-scale behavior of the conserved quantities of microscopic random systems. In this respect, two important scaling limits are natural questions of interest. The first concerns the space-time evolution of the conserved quantities of the system. This is known in the realm of interacting particle systems as the hydrodynamic limit. The second is related to the description of the fluctuations of the random system around the typical hydrodynamical behavior of the system. Then one can also investigate non-typical profiles, which are described by large deviation principles. While many results in this context are known for specific models, developing unified approaches that apply across broad classes of systems remains a challenging task, and this work is a first step in that direction. 
\medskip

Here we focus on a class of interacting stochastic systems whose Markov generators exhibit an algebraic structure: they preserve the degree of polynomials in the state variables up to order two. We refer to such models as degree-preserving processes. In addition, we assume that the dynamics admits spatially homogeneous product invariant measures and satisfies a set of mild structural conditions (see Assumptions \ref{assumption:second_order_polymonial} and \ref{assmp:generator_element_basic}), which are met by a broad family of models of interest.
Under these assumptions, we obtain two main results. First, we show that all possible marginals of invariant product measures are highly constrained: they must belong to one of six explicit distributions. To the best of our knowledge, such a classification has not previously been established in the context of large-scale interacting stochastic systems, and it coincides with the theory of natural exponential families with quadratic variance derived by Morris in \cite{morris1982natural}.
However, note that our assertion is not just a direct application of the classical result of \cite{morris1982natural}, but it is a conceptual leap. 
Second, we provide a unified and robust derivation of the hydrodynamic limit for all models in this class, thereby placing a wide variety of systems within a common analytical framework. 
A further contribution of this work is the construction of a new model in this class of degree-preserving processes, whose invariant measure is a product of generalized hyperbolic secant distributions (GHS): we refer to it as the GHS model. 
Moreover, we also present several representative examples, illustrating each of the admissible invariant measures and their associated hydrodynamic limits. 
\medskip

To make our results precise, we begin by defining the general setting.
Throughout the present paper, we consider Markov processes with state-space $\mathscr X_N\coloneqq S^{\mathbb T_N}$ where  $S\subseteq \mathbb R$  and $\mathbb T_N=\mathbb Z/N\mathbb Z\cong \{1,2,\ldots,N\}$ denotes the one-dimensional discrete torus, and we denote by $\eta=(\eta_x)_{x\in\mathbb T_N}$ an element of the set $\mathscr X_N$,   which we refer to as a state variable.   
For simplicity, we consider the one-dimensional  one-dimensional setting with periodic boundary condition. We expect that the hydrodynamic limit results extend to models evolving on the $d$-dimensional discrete torus; however, this would require estimates for higher-dimensional random walks, and we prefer to work in one dimension in order to simplify the presentation. 
\medskip

Degree-preserving systems arise naturally and classical examples include the simple exclusion process (SEP), independent random walks (IRW), the symmetric inclusion process (SIP), as well as energy exchange models such as the Kipnis–Marchioro–Presutti (KMP) model and the Ginzburg–Landau dynamics with quadratic potential. 
See Table \ref{tab:esempio3} for the list of examples in this class.
From a microscopic viewpoint, many of these models exhibit additional algebraic structures that can be used to establish duality relations that enable explicit computations \cite{dualitybook}. In particular, models with product binomial invariant measures are associated with the $\mathfrak{su}(2)$ Lie algebra; models with product negative-binomial, Gamma, or Gaussian invariant measures are associated with the $\mathfrak{su}(1,1)$ Lie algebra; and models with product Poisson invariant measures are associated with the Heisenberg algebra. As shown in \cite{dualitybook}, different representations of these algebras give rise to different processes connected through duality. It would be interesting to investigate whether the new GHS model can also be framed within such a representation.
\medskip

From a macroscopic perspective, scaling limits for energy exchange models have been studied less extensively than those for particle systems or interacting diffusions. Our assumptions guarantee that these models belong to the class described by Macroscopic Fluctuation Theory (see Remark~\ref{MFT remark}), and it would be interesting to uncover their connections with classical integrable systems, as in \cite{Mallick2024Exact}. 
While hydrodynamic limits are well understood for many interacting particle systems, significantly less is known for energy exchange models. In the earlier work \cite{franceschini2026hydrodynamic}, hydrodynamic limits were established for several such models using attractiveness arguments. However, many degree-preserving systems remained beyond reach and  one of the contributions of the present work is to provide a robust proof of their hydrodynamic limits using mild assumptions on their Markov generators. 
\medskip

Our approach is based on a unified structural analysis with the help of the degree-preserving property.    
As in \cite{morris1982natural}, we show that the invariant product measures with quadratic variance functions are restricted to six families: normal, Poisson, gamma, binomial, negative-binomial, and generalized hyperbolic secant (GHS) distributions. 
While stochastic processes corresponding to the first five cases are known, we construct a new redistribution-type model admitting the GHS distribution as its invariant measure. Surprisingly, there is no interacting diffusion with such invariant measure, see Remark \ref{rem:GHS_not_exists}.
On the dynamical side, we prove that all degree-preserving processes satisfying our assumptions are of gradient type, i.e. their microscopic current writes as a gradient of a local function, which for the models under investigation is just a multiple of the state variable $\eta_x$. As a consequence they all share the same hydrodynamic behavior: the empirical measure converges to the solution of the classical heat equation. In other words, this means that the empirical measure associated to the state variable $\eta_x$ converges to a deterministic measure whose density is the unique weak solution of the heat equation. Since our state variables can take both positive and negative values, we consider the empirical measure process acting on test functions belonging to suitable Sobolev spaces. This universality is particularly striking given the diversity of microscopic dynamics encompassed by our framework.
\medskip

A key technical challenge is the derivation of uniform moment bounds without relying on attractiveness. Our strategy is based on the analysis of a suitable (see equation~\eqref{eq:redefine}) average of the two-point correlation function, whose evolution can be related to that of an auxiliary random walk, whose jumping rates are model dependent. By exploiting this connection and controlling the associated local times via Duhamel’s formula, we establish the necessary bounds in a robust and model-independent way. This method is flexible and applies uniformly across the class of degree-preserving processes. As a consequence, the proof of the hydrodynamic limit holds generally within such class  of models.
\medskip

Several natural directions remain open. A particularly important one is the analysis of fluctuations, i.e., establishing a central limit theorem describing deviations from the hydrodynamic limit, which is a natural and non-trivial question. We believe that our proof of moment bounds can be carried over to higher moments without many difficulties, albeit we leave this to a future work.
Another interesting question is whether processes with the degree preserving property can be fully characterized: even when the invariant distribution is fixed, there are multiple models satisfying these properties and it would be natural to determine how large the class is. 
\medskip

\begin{table}[ht]
  \centering
  \caption{List of examples of degree-preserving processes for each marginal distribution.}
  \label{tab:esempio3}
  \begin{tabular}{|l|l|l|l|l|l|l|}
    \hline
   & \makecell{Normal} & \makecell{Poisson} & \makecell{Gamma} & \makecell{Binomial} & \makecell{NBinomial} & \makecell{GHS} \\
    \hline
    \makecell{ Interacting \\ systems} & \makecell{Quadratic \\ Ginzburg- \\ Landau } & \makecell{Indep. \\ Random \\ Walkers} & \makecell{ Brownian \\ Energy} & \makecell{ Partial \\ Exclusion  } & \makecell{ Symmetric \\ Inclusion } &  \makecell{(Unknown)}  \\
    \hline
    \makecell{ Redistribution \\ systems} & \makecell{Therm.\\GL} & \makecell{Therm.\\IRW} & \makecell{Kipnis-\\Marchioro-\\ Presutti }  & \makecell{Therm.\\PEP} & \makecell{ Discrete \\KMP} & \makecell{GHS \\ Model}\\
    \hline
   \makecell{Other \\ models}  &   &  &   \makecell{ Continuous \\ Harmonic }  &  & \makecell{ Harmonic } &  \\
    \hline
  \end{tabular}
\end{table}

\subsection*{Organization of the paper}  
In \cref{sec:result}, we give the precise description of our models, assumptions and state the two main results - the first one is the classification of invariant measures into six distributions (\cref{thm:classification_main}), whereas the other one is the hydrodynamic limit (\cref{thm:hydrodynamic_general_main}). In \cref{thm:hydrodynamic_general_main}, we have imposed that under the initial measures, the average of second moments of the state variables is uniformly bounded in $N$. This is used in order to have the same bounds at any time $t$. We note that this condition is natural as it appears, for example, in the assumption of the hydrodynamic limit for the zero-range process, see \cite{kipnis1999scaling}. In \cref{sec:inv_measure_and_process}, we list examples of known models satisfying Assumptions \ref{assmp:generator_element_basic} and \ref{assumption:second_order_polymonial}, as well as a new model whose invariant measures have GHS marginals. 
The processes in the list include: interacting particles, interacting diffusions and other processes with redistribution-type interactions. 
The results of Section \ref{sec4} obtained as consequences of 
Assumptions \ref{assmp:generator_element_basic} and \ref{assumption:second_order_polymonial} together with the results of \cref{sec:proof_completion} are devoted to complete the proof of \cref{thm:hydrodynamic_general_main}. 
In subsection \ref{subsec4.1}, we show that under our assumptions all the models are of gradient form, see Lemma \ref{lem:lap}, and we also show that the quadratic variation of the martingale associated to the empirical measure is of quadratic form, see Lemma \ref{lem:qua_var}. In particular, in order to show that the limiting equation is deterministic, which boils down to showing that the aforementioned martingale vanishes as $N\to+\infty$, we need some uniform second moment bound, and the decay of the two-point correlation function.   
This is presented in subsections \ref{sebsec:main_estimates} and \ref{subsec4.2.1}.  
In \cref{sec:proof_completion}, we complete the proof of the hydrodynamic limit, namely  \cref{thm:hydrodynamic_general_main}.
Finally in Appendix \ref{sec:construction_dynamics} we prove that the models construed as redistribution-type are well-defined in terms of their Markov generators, while in Appendix \ref{sec:degree-preserving_verification} we show that the models satisfy the martingale properties of Assumption \ref{assumption:second_order_polymonial}.

\section{Models and results}
\label{sec:result}
\subsection{General setting} 
Here let us describe a class of systems that we are interested in. 
Throughout the present paper, let $\mathbb T_N=\mathbb Z/N\mathbb Z\cong \{1,\ldots, N\}$ be the discrete torus with $N$ points and whose elements are identified with modulo $N$ where $N\in\mathbb N$ is a divergent scaling parameter. 
Moreover, let $\mathbb T=\mathbb R/\mathbb Z\cong [0,1)$ be the continuous torus of length $1$. 
In what follows, let $S$ be a measurable and locally compact subset in $\mathbb R$, which in fact turns out to be $\mathbb R$, $\mathbb R_+\coloneqq [0,+\infty)$, $\mathbb N_0\coloneqq \{0,1,\ldots\}$ or $\{0,1,\ldots,\kappa\}$ with some $\kappa\in \mathbb N$, and we consider a model on the configuration space $\mathscr X_N=S^{\mathbb T_N}$. 
We denote by $\eta=\{\eta_x\}_{x\in\mathbb T_N}$ an element in $\mathscr X_N$ where $\eta_x$ stands for the state at site $x\in\mathbb T_N$. 
For each $x\in\mathbb Z$, let $\tau_x$ be the canonical shift $\tau_x\eta_z=\eta_{z+x}$ for any $z\in\mathbb T_N$.
In what follows, let $C(\mathscr X_N)$ denote the space of real-valued continuous functions on $\mathscr X_N$ when $\mathscr X_N$ is compact, and the space of real-valued continuous functions vanishing at infinity on $\mathscr X_N$ when $\mathscr X_N$ is locally compact.
The space $C(\mathscr X_N)$ is endowed with the uniform norm $  
\|f\| = \sup_{\eta \in \mathscr X_N} |f(\eta)|$,
which makes $C(\mathscr X_N)$ a Banach space.
Moreover, let us define a shift on any function on the configuration space by $\tau_x f(\eta)=f(\tau_x\eta)$. 
Now, to define a dynamics, let $L$ be an operator with the domain $\mathcal D(L)$ given by 
\begin{equation}
\label{eq:generator_definition}
L
=\sum_{x \in \mathbb{T}_N}L_{x,x+1}  
\end{equation}
where $L_{x,x+1}$ is defined by $L_{x,x+1}=\tau_x L_{0,1} \tau_{-x}$ and $L_{0,1}$ is a linear operator and assume that the domain of each operator $L_{x,x+1}$ is the same as the one of $L$.  
Throughout the paper, let $T>0$ be a fixed time horizon and assume that there is a Feller process $(\eta_x(t))_{t\ge0,x\in\mathbb T_N} $ on the space $D([0,T],\mathscr X_N)$ with Markovian generator $L$ as in \eqref{eq:generator_definition} which satisfies all items in the forthcoming assumptions (Assumptions \ref{assmp:generator_element_basic} and \ref{assumption:second_order_polymonial}). 
Above, $D([0,T],\mathscr X_N)$ denotes the space of all right-continuous trajectories with left-hand limits, taking values on the configuration space $\mathscr X_N$ and being endowed with the Skorohod topology. 
Let $\mu_N$ be a probability measure on $\mathscr X_N$ and we denote by $\mathbb P_{\mu_N}$ the probability measure on $D([0,T],\mathscr X_N)$ associated to the process $\{\eta_x(t)\}_{t\in [0,T],x\in\mathbb T_N}$, provided the distribution of $\eta(0)$ is given by $\mu_N$. 
Moreover, let $\mathbb E_{\mu_N}$ denote the expectation with respect to $\mathbb P_{\mu_N}$.  
Since the initial distribution $\mu_N$ will be fixed throughout the paper, we use the short-hand notation $\mathbb P_N=\mathbb P_{\mu_N}$ as well as $\mathbb E_N=\mathbb E_{\mu_N}$, if no confusion arises.  
Moreover, we also need to introduce the notion of the natural exponential family. A family of probability measures $\{\overline{\mu}
_{\lambda}\}_{\lambda \in I}$ on $S$ is said to be a natural exponential family (provided that $I \subset \mathbb{R}$ is not a singleton) if $\overline\mu_{\la}(ds)=(1/Z_\lambda) e^{\la s}\mu^0(ds)$ for a non-Dirac probability measure $\mu^0$ on $S$ and $I$ is the set of $\lambda \in \R$ satisfying $Z_\lambda\coloneqq \int_S e^{\lambda s}\mu^0(ds)$ is finite. Note that $I$ is a (possibly infinite) interval.

\begin{assmp}[Basic assumptions]
\label{assmp:generator_element_basic}
Assume that the operator $L_{0,1}$ satisfies: 
\begin{enumerate}
\item[\textnormal{(A1)}] Its kernel includes the constant function $1$. 

\item[\textnormal{(A2)}]
$L_{0,1} (FG)=F (L_{0,1}G)$ if $F,G, FG \in \mathcal D(L)$ and $F$ is a function of $\{\eta_y\}_{y\in\mathbb T_N\setminus \{0,1\}}$ and $\eta_0+\eta_{1}$. 

\item[\textnormal{(A3)}] $L_{0,1}F=\sigma_{0,1} (L_{0,1} \sigma_{0,1}F)$ for any $F\in \mathcal D(L)$ where $\sigma_{0,1}F(\eta)=F( \sigma_{0,1}\eta)$ and $\sigma_{0,1}\eta$ is the configuration obtained from $\eta$ by exchanging $\eta_0$ and $\eta_1$, which formally means $L_{0,1}=L_{1,0}$. 

\item[\textnormal{(A4)}] There exists a family of invariant probability measures $\{\overline{\nu}_{\la}\}_{\lambda\in I}$ for the dynamics of $\{\eta(t)\}_{t\ge 0}$, which are spatially homogeneous non-Dirac product measures whose common marginals are given by a natural exponential family. Namely, $\overline{\nu}_{\la}(d\eta)=\prod_{x \in \mathbb{T}_N}\overline{\mu}_{\la}(d\eta_x)$ where $\{\overline{\mu}
_{\lambda}\}_{\lambda \in I}$ is a natural exponential family.

\item[\textnormal{(A5)}] $E_{\overline\nu_\lambda}[F (-LF)] \ge 0$ for any $F \in \mathcal D(L)$ and for the invariant probability measure $\overline\nu_\lambda$ given in \textnormal{(A4)}.
\end{enumerate}
\end{assmp}

Next, we further introduce the stronger conditions on the model, which are crucial for deriving our two main results.
Let $\mathcal P_k$ be the space of all polynomials of $\eta_1,\ldots,\eta_N$ up to degree $k$, and we use a convention that a constant function is regarded as a degree-zero polynomial.  
We will consider the following processes for given $f\in \mathcal P_k$:
\begin{equation}
\label{eq:polynomial_martingale}
M_f(t)
\coloneqq 
f(\eta(t))
- f(\eta(0)) 
-\int_0^t Lf(\eta(s))ds
\end{equation}
and 
\begin{equation}
\label{eq:polynomial_martingale_qv}
N_f(t)\coloneqq 
M_f(t)^2 - \int_0^t
\big(Lf(\eta(s))^2 - 2 f(\eta(s))Lf(\eta(s))\big) ds,
\end{equation} 
where we impose that they are well-defined under the following assumption.

\begin{assmp}[Degree-preserving property] 
\label{assumption:second_order_polymonial}
Assume that the operator $L_{0,1}$ can act also on polynomials in $\mathcal P_2$, and it satisfies the following relations:  
\begin{itemize}
\item[\textnormal{(A6.1)}]  
$L_{0,1}\eta_0=\mathsf{p}  \eta_0+ \mathsf{q}  \eta_1 + \mathsf{r} $ for some $\mathsf{p} ,\mathsf{q} ,\mathsf{r}  \in \mathbb R$ and $L_{0,1}\eta_0 \not\equiv 0 $, and for any $f\in\mathcal P_1$, the processes $M_f(t)$ and $N_f(t)$ are martingales with respect to the natural filtration of the processes. 

\item[\textnormal{(A6.2)}] 
$L_{0,1}(\eta_0\eta_1)=\mathsf{a} (\eta_0^2+\eta_1^2)+\mathsf{b}   \eta_0\eta_1 + \mathsf{c}  (\eta_0+\eta_1)+\mathsf{d}$ for some $\mathsf{a} , \mathsf{b},\mathsf{c}  ,\mathsf{d} \in\mathbb R$ and $L_{0,1}(\eta_0\eta_1) \not\equiv  0$, and for any $f\in\mathcal P_2\setminus \mathcal P_1$, the process $M_f(t)$ is a martingale. 

\end{itemize}
\end{assmp}

It is straightforward to check that under the conditions \textnormal{(A1)}, \textnormal{(A2)} and \textnormal{(A6.1)}, we have $L_{0,1}(\sum_{x \in \mathbb{T}_N}\eta_x)=L_{0,1}(\eta_0+\eta_1)=0$ and thus $L(\sum_{x \in \mathbb{T}_N}\eta_x)=0$. 
Namely, we have at least one conservation law. 

We remark that under condition \textnormal{(A3)}, it follows automatically that $L_{0,1}(\eta_0\eta_1)$ is symmetric in $\eta_0$ and $\eta_1$. Therefore, the condition \textnormal{(A6.2)} is equivalent to requiring that $L_{0,1}(\eta_0\eta_1)$ be a polynomial in $\eta_0$ and $\eta_1$ of degree at most two.

\begin{Remark}
The condition \textnormal{(A4)} may be relaxed to the existence of a single homogeneous product invariant measure in a suitable setting. In fact, suppose that there exists an invariant probability measure $\nu^0(d\eta)=\prod_{x \in \mathbb{T}_N}\mu^0(d\eta_x)$ for the dynamics $(\eta(t))_{t \ge 0}$, which is a spatially homogeneous product measure.
Then, by assumption \textnormal{(A2)}, for 
functions $F: \R \to \R$ and $f: \mathscr X_N \to \mathbb R$, in a certain class, we have 
\begin{equation*}
L\bigg\{ F\Big(\sum_{x \in \mathbb{T}_N} \eta_x\Big) f\bigg\} 
= F\Big(\sum_{x \in \mathbb{T}_N} \eta_x\Big) Lf. 
\end{equation*}
Consequently, the expectation with respect to the measure $\nu^0$ of the last display is zero. 
Now, if $F(a)$ can be chosen to be $e^{\lambda a}$ for $\lambda$ satisfying $Z_{\lambda}< +\infty$, we obtain
$ E_{\overline{\nu}_{\lambda}}[Lf]=0$.  
Therefore, assumption \textnormal{(A4)} can in fact be weakened to the existence of a single spatially homogeneous product invariant measure under a mild condition, for instance when $S$ is finite, but we do not pursue it here. 
\end{Remark}

\begin{lemma}
\label{lem:gradient_condition}
Under Assumptions \ref{assmp:generator_element_basic} and \ref{assumption:second_order_polymonial}, $L_{0,1}\eta_0=D(\eta_1-\eta_0)$ for some $D>0$.
\end{lemma}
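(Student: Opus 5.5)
We need $\mathsf p=-D$, $\mathsf q=D$, $\mathsf r=0$ and $D>0$. The plan is to derive each relation from a different part of the assumptions.

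\emph{Step 1: $\mathsf p=-\mathsf q$ and $\mathsf r=0$.} By (A3), $L_{0,1}\eta_1=\sigma_{0,1}(L_{0,1}\eta_0)=\mathsf p\eta_1+\mathsf q\eta_0+\mathsf r$. By (A2) with $G=1$ and $F=\eta_0+\eta_1$, together with (A1), we get $L_{0,1}(\eta_0+\eta_1)=(\eta_0+\eta_1)L_{0,1}1=0$. Hence
\[
(\mathsf p+\mathsf q)(\eta_0+\eta_1)+2\mathsf r\equiv 0 .
\]
Since $\overline\mu_\lambda$ is non-Dirac, $S$ contains at least two points, so $\eta_0+\eta_1$ takes at least two distinct values. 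This forces $\mathsf p+\mathsf q=0$ and $\mathsf r=0$. Setting $D:=\mathsf q$, we obtain $L_{0,1}\eta_0=D(\eta_1-\eta_0)$, and (A6.1) (namely $L_{0,1}\eta_0\not\equiv0$) gives $D\neq 0$.

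\emph{Step 2: $D>0$, via the Dirichlet form (A5) applied to linear functions.} Take a linear $f=\sum_x g_x\eta_x$. Using $L=\sum_x L_{x,x+1}$ and the shift structure, each bond contributes $D(\eta_{x+1}-\eta_x)(g_x-g_{x+1})$, so
\[
Lf=D\sum_x(g_{x+1}-2g_x+g_{x-1})\eta_x .
\]
Now compute $E_{\overline\nu_\lambda}[f(-Lf)]$ under the product measure. Write $m$ and $v>0$ for the mean and variance of $\overline\mu_\lambda$ ($v>0$ because the measure is non-Dirac; the moments are finite since $\lambda$ lies in the interior of $I$, or we restrict to such $\lambda$). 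Because $\sum_x(\Delta g)_x=0$, the mean terms cancel, and we get
\[
E_{\overline\nu_\lambda}[f(-Lf)]=-Dv\sum_x g_x(\Delta g)_x=Dv\sum_x (g_{x+1}-g_x)^2 .
\]
Choosing $g$ nonconstant (e.g.\ $g=\mathbf 1_{\{0\}}$) makes the sum strictly positive. Then (A5) forces $D\ge0$, and combined with $D\ne0$ this gives $D>0$.

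\emph{Main obstacle: justifying (A5) for $f$.} Linear polynomials need not lie in $\mathcal D(L)$ (for example when $S$ is unbounded). If $\mathcal D(L)$ does not contain $\mathcal P_1$, the plan is to use the martingale structure in (A6.1) instead. Start the process from $\overline\nu_\lambda$, which is stationary. Stationarity gives
\[
E[f(\eta(t))^2]-E[f(\eta(0))^2]=0 .
\]
On the other hand, since both $M_f$ and $N_f$ are martingales, expanding $f(\eta(t))=f(\eta(0))+M_f(t)+\int_0^t Lf\,ds$ and using the quadratic variation from $N_f$ yields the identity
\[
E[(Lf)^2-2fLf]\,t+2t\,E[fLf]=0 ,
\]
which is consistent and does not determine the sign. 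A better route is to use $E[M_f(t)^2]\ge0$: by $N_f$,
\[
E[M_f(t)^2]=t\,E_{\overline\nu_\lambda}\big[L(f^2)-2fLf\big]\ge0,
\]
where the left-hand generator expression is interpreted through (A6.2). Since $E_{\overline\nu_\lambda}[L(f^2)]=0$ by invariance (obtained from the martingale $M_{f^2}$), this gives $E[-fLf]\ge0$, which is exactly the quantity computed above. So in this case the positivity comes from variance nonnegativity rather than directly from (A5), and either way $D>0$ follows.
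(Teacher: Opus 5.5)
Your proof is correct and follows the paper's argument. First, (A3) together with $L_{0,1}(\eta_0+\eta_1)=0$ from (A1)--(A2) gives $\mathsf r=0$ and $\mathsf p+\mathsf q=0$. Then (A5) applied to a linear function gives $D\,E_{\overline\nu_\lambda}[(\eta_0-\eta_1)^2]\ge 0$; your choice $g=\mathbf 1_{\{0\}}$ is exactly the paper's test function $\eta_0$, and $D\neq 0$ comes from $L_{0,1}\eta_0\not\equiv 0$ in (A6.1).

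Your fallback is valid and covers a domain point the paper passes over, namely that $\eta_0$ may not lie in $\mathcal D(L)$. It uses $E[M_f(t)^2]\ge 0$ for the process started from $\overline\nu_\lambda$, together with $E_{\overline\nu_\lambda}[L(f^2)]=0$ from the martingale $M_{f^2}$, to obtain the Dirichlet-form inequality for linear $f$. This requires the martingale properties in (A6.1)--(A6.2) to hold under that initial law. Also, the discarded intermediate identity should involve $L(f^2)$, not $(Lf)^2$.
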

\begin{proof}
Conditions \textnormal{(A6.1)} and  condition \textnormal{(A3)} imply $L_{0,1}\eta_1=\mathsf{p}  \eta_1+\mathsf{q}  \eta_0 +\mathsf{r} $. Then, since $L_{0,1}(\eta_0+\eta_1)=0$ by \textnormal{(A1)} and \textnormal{(A2)}, we have $\mathsf{r} =0$ and $\mathsf{p} +\mathsf{q} =0$. 
Finally, \textnormal{(A5)} and \textnormal{(A4)} imply 
\begin{equation*}
\begin{aligned}
0 &\le E_{\overline{\nu}_{\lambda}}[\eta_0 (-L\eta_0)]=E_{\overline{\nu}_{\lambda}}[\eta_0 (-L_{0,1}\eta_0)] + E_{\overline{\nu}_{\lambda}}[\eta_0 (-L_{-1,0}\eta_0)] \\
&= E_{\overline{\nu}_{\lambda}}[\eta_0(\mathsf{p} \eta_1-\mathsf{p} \eta_0)] +E_{\overline{\nu}_{\lambda}}[\eta_0 (\mathsf{p} \eta_{-1}-\mathsf{p} \eta_0)]=-\mathsf{p} E_{\overline{\nu}_{\lambda}}[(\eta_0-\eta_1)^2].    
\end{aligned}
\end{equation*} 
Letting $D=-\mathsf{p} $, we conclude the proof. 
\end{proof}
The above lemma guarantees that all the models are of gradient type, see Lemma \ref{lem:lap}. 

\subsection{Classification of invariant measures} 
Our first main theorem is that, under Assumptions \ref{assmp:generator_element_basic} and \ref{assumption:second_order_polymonial}, we can completely characterize possible distributions of invariant measures of the dynamics.
To see that, observe that under assumption \textnormal{(A4)}, for any $\lambda \in I^{o}$, the interior of $I$, $E_{\overline{\nu}_{\lambda}}[|\eta_0|^k] <\infty$ for any $k \in \mathbb{N}$. Moreover, $\overline{\rho}: \lambda \to \overline{\rho}(\lambda)\coloneqq E_{\overline{\nu}_{\lambda}}[\eta_0]$ is strictly increasing since $\overline{\rho}'(\lambda)= \mathrm{Var}_{\overline\nu_\lambda}[\eta_0]>0$. Hence, for any $\rho \in \overline{\rho}(I^{o})$, there exists a unique $\lambda=\lambda(\rho)$ satisfying $E_{\overline\nu_{\lambda}}[\eta_0]=\rho$, in which case we use the short-hand notation $\nu_{\rho}=\overline{\nu}_{\la(\rho)}$. With this notation in place, we can state our first main theorem.

\begin{thm}
\label{thm:classification_main}
Under Assumptions \ref{assmp:generator_element_basic} and \ref{assumption:second_order_polymonial}, we have $\mathsf{a} \neq 0$, and there exist constants $v_0, v_1, v_2 \in\mathbb R$ such that $\mathrm{Var}_{\nu_\rho}[\eta_0]=v_2\rho^2+v_1\rho+v_0$ for any $\rho \in \overline{\rho} (I^{o})$, satisfying the relations 
\begin{equation*}
v_2= -\mathsf{b}/(2\mathsf{a})-1 ,\quad 
v_1=  -{\mathsf{c}}/{\mathsf{a}} \quad\text{ and }\quad 
v_0= -{\mathsf{d}}/({2\mathsf{a}}), 
\end{equation*}
for $\mathsf{a},\mathsf{b},\mathsf{c}, \mathsf{d}$ given in \textnormal{(A6.2)}.
Moreover, up to constant shifts and scaling (that is, up to an affine transformation $A\eta_0+B$ with constants $A$ and $B$), the marginal distribution of $\eta_0$ is given by either normal, Poisson, gamma, binomial, negative-binomial or the generalized hyperbolic secant distribution where the parameters $v_0,v_1$ and $v_2$ are given by 
$
(v_2,v_1,v_0)=(0,0,1), (0,1,0), (s,0,0),(-1/{\kappa},1,0), (s,1,0)$ or $(s,0,1),
$
respectively, where $s>0$ and $\kappa \in \mathbb N$. 
\end{thm}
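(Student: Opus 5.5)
The plan is to extract the variance function from invariance identities for degree-two polynomials, and then to invoke Morris' classification of natural exponential families with quadratic variance function \cite{morris1982natural}.

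\textbf{Step 1: the basic identity.} Fix $\lambda\in I^o$, so that all moments of $\overline\nu_\lambda$ are finite. Invariance gives $E_{\overline\nu_\lambda}[Lf]=0$ for $f\in\mathcal P_2$; this uses the martingale property in (A6.2), a localization argument, and the stationarity of $\overline\nu_\lambda$. I would apply it to $f=\eta_0\eta_1$. Writing $L=\sum_x L_{x,x+1}$ and using (A2) and (A6.1) together with Lemma \ref{lem:gradient_condition}, the terms $L_{x,x+1}$ with $\{x,x+1\}\cap\{0,1\}=\emptyset$ vanish. The terms $L_{-1,0}$ and $L_{1,2}$ act linearly on a single coordinate, for instance $L_{1,2}(\eta_0\eta_1)=\eta_0 D(\eta_2-\eta_1)$, and their expectations under the product measure are $D(\rho^2-m_2)$ each, where $m_2=E[\eta_0^2]$. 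The remaining term is $L_{0,1}(\eta_0\eta_1)$, whose expectation is $2\mathsf a m_2+\mathsf b\rho^2+2\mathsf c\rho+\mathsf d$. So
$$0=2\mathsf a m_2+\mathsf b\rho^2+2\mathsf c\rho+\mathsf d+2D(\rho^2-m_2).$$
This identity does not pin down $m_2$ unless we know more about $\mathsf a$ relative to $D$.

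\textbf{Step 2: a second relation from $f=\eta_0^2$.} I would also use $f=\eta_0^2$. Since $L_{0,1}(\eta_0+\eta_1)^2=0$ by (A1) and (A2), and $L_{0,1}\eta_0^2=L_{0,1}\eta_1^2$ by (A3), we get $L_{0,1}\eta_0^2=-L_{0,1}(\eta_0\eta_1)$. Then $E[L\eta_0^2]=-2E[L_{0,1}(\eta_0\eta_1)]=0$, so $E[L_{0,1}(\eta_0\eta_1)]=0$. Substituting this into Step 1 gives $D(\rho^2-m_2)=0$, which contradicts non-degeneracy.

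So something is off, and the carré du champ should be used instead. The natural source of the extra relation is $N_f$ in (A6.1) with $f=\eta_0$. Its compensator is $\Gamma(\eta_0)=L\eta_0^2-2\eta_0L\eta_0$, and
$$L_{0,1}\eta_0^2-2\eta_0L_{0,1}\eta_0=-L_{0,1}(\eta_0\eta_1)-2D\eta_0(\eta_1-\eta_0).$$
I would redo the bookkeeping carefully. The invariance identities should give the symmetric-product relation $E[L_{0,1}(\eta_0\eta_1)]=0$, namely
$$2\mathsf a m_2+\mathsf b\rho^2+2\mathsf c\rho+\mathsf d=0.$$
This is consistent with the claimed formulas: the product measure is exchangeable and $E[L_{0,1}\eta_0^2]=0$ for the pair dynamics. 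Solving gives $m_2=-(\mathsf b\rho^2+2\mathsf c\rho+\mathsf d)/(2\mathsf a)$. Hence
$$\mathrm{Var}=m_2-\rho^2=\Big(-\frac{\mathsf b}{2\mathsf a}-1\Big)\rho^2-\frac{\mathsf c}{\mathsf a}\rho-\frac{\mathsf d}{2\mathsf a},$$
which matches the statement.

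\textbf{Step 3: showing $\mathsf a\neq0$.} If $\mathsf a=0$, the relation above reads $\mathsf b\rho^2+2\mathsf c\rho+\mathsf d=0$ on the open interval $\overline\rho(I^o)$. Then $\mathsf b=\mathsf c=\mathsf d=0$, so $L_{0,1}(\eta_0\eta_1)\equiv0$, contradicting (A6.2).

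\textbf{Step 4: classification.} $\{\overline\mu_\lambda\}$ is a natural exponential family whose variance function on $\overline\rho(I^o)$ is quadratic. By Morris' theorem, up to affine maps $A\eta_0+B$, it is one of the six families. After normalizing, the parameters are $(0,0,1)$ for normal, $(0,1,0)$ for Poisson, $(s,0,0)$ for gamma, $(-1/\kappa,1,0)$ for binomial, $(s,1,0)$ for negative binomial, and $(s,0,1)$ for GHS. Morris' theorem also shows that the variance function determines the family.

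\textbf{Main obstacle.} I expect the main difficulty to be justifying the invariance identities $E_{\overline\nu_\lambda}[L f]=0$ for the unbounded polynomials $f\in\mathcal P_2$. My plan is to take expectations of $M_f(t)$ started from $\overline\nu_\lambda$ and use stationarity, with the finite moments on $I^o$ ensuring integrability. A second, related task is to get the correct combination of terms in Step 1 via exchangeability. I would check this combination against a known example: for independent random walks, $\mathsf a=D$, $\mathsf b=-2D$ and $\mathsf c=\mathsf d=0$ should yield the Poisson variance $\rho$.
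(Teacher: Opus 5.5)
Your overall route is the paper's. You apply invariance to $f=\eta_0\eta_1$ and reduce it to $E_{\nu_\rho}[L_{0,1}(\eta_0\eta_1)]=0$. From this you read off $E_{\nu_\rho}[\eta_0^2]$ and hence the variance, rule out $\mathsf a=0$ via the polynomial identity on $\overline\rho(I^o)$, and finish with Morris.

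As written, however, the key identity is never derived. You obtain a contradiction between Steps 1 and 2, leave it unresolved, and then assert that the invariance identities ``should give'' $2\mathsf a m_2+\mathsf b\rho^2+2\mathsf c\rho+\mathsf d=0$.

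The contradiction comes from an error in Step 1. The product $\eta_0\eta_1$ involves two distinct sites, so under the product measure $E[\eta_0\eta_1]=\rho^2$, not $m_2$. Hence $E[L_{1,2}(\eta_0\eta_1)]=D\,E[\eta_0(\eta_2-\eta_1)]=D(\rho^2-\rho^2)=0$. Likewise $E[L_{-1,0}(\eta_0\eta_1)]=D\,E[\eta_1(\eta_{-1}-\eta_0)]=0$. With this correction, Step 1 gives $0=E[L(\eta_0\eta_1)]=E[L_{0,1}(\eta_0\eta_1)]$ directly. That is exactly the paper's argument, and no contradiction arises.

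Step 2 is then a valid alternative derivation of the same identity, but only in expectation. (A3) gives $L_{0,1}\eta_1^2=\sigma_{0,1}(L_{0,1}\eta_0^2)$, not $L_{0,1}\eta_1^2=L_{0,1}\eta_0^2$. Pointwise, $L_{0,1}\eta_0^2=D(\eta_1^2-\eta_0^2)-L_{0,1}(\eta_0\eta_1)$. Your carré du champ formula is missing this same term $D(\eta_1^2-\eta_0^2)$, and that detour is not needed anyway. Exchangeability and translation invariance of $\overline\nu_\lambda$ remove $E[D(\eta_1^2-\eta_0^2)]$, so $E[L\eta_0^2]=-2E[L_{0,1}(\eta_0\eta_1)]$ and the conclusion stands.

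Your justification of $E_{\overline\nu_\lambda}[Lf]=0$ for $f\in\mathcal P_2$ is fine. You use the martingale property from a stationary start, with all moments finite on $I^o$, and this supplies a point the paper leaves implicit.

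Your proposed sanity check uses a wrong constant. For independent random walks, $L_{0,1}(\eta_0\eta_1)=D(\eta_0-\eta_1)^2-D(\eta_0+\eta_1)$, so $\mathsf c=-D$ rather than $0$. With $\mathsf c=0$ the formula would give zero variance instead of $\rho$.
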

\begin{proof}
In order to prove the first claim of the theorem, we note that $E_{\nu_{\rho}}[L(\eta_0\eta_{1})]=0$ implies that $E_{\nu_{\rho}}[L_{0,1}(\eta_0\eta_{1})]=0$. 
This follows from the fact that, using \textnormal{(A2)},
(A3) and \cref{lem:gradient_condition}, one gets that 
$E_{\nu_{\rho}}[L_{-1,0}(\eta_0\eta_{1})]=E_{\nu_{\rho}}[L_{1,2}(\eta_0\eta_{1})] = 0$.
Using condition \textnormal{(A6.2)} we have that 
\begin{equation*}
2\mathsf{a}  E_{\nu_{\rho}}[\eta_0^2] 
+\mathsf{b}   \rho^2 
+2\mathsf{c}   \rho 
+\mathsf{d}   =0 . 
\end{equation*}

If $\mathsf{a}  = 0$, then $L_{0,1} (\eta_0 \eta_1) \equiv 0 $ since $\mathsf{b}   \rho^2 +2\mathsf{c}   \rho +\mathsf{d}   =0$ holds for $\rho$ in a certain interval, which implies $\mathsf{b}  =\mathsf{c}  =\mathsf{d}   =0$. 
In \textnormal{(A6.2)} it is assumed $L_{0,1} (\eta_0 \eta_1) \not\equiv  0 $ and thus we have $\mathsf{a}  \neq 0$.
Then 
\begin{equation*}
E_{\nu_{\rho}}[\eta_0^2] 
= - \frac{\mathsf{b}  }{2\mathsf{a} } \rho^2 - \frac{\mathsf{c}  }{\mathsf{a} } \rho - \frac{\mathsf{d}    }{ 2\mathsf{a} },  
\end{equation*}
which means that
\begin{equation*}
\mathrm{Var}_{\nu_\rho}[\eta_0] 
= - \Big(1 + \frac{\mathsf{b}  }{2\mathsf{a} } \Big) \rho^2 
- \frac{\mathsf{c}  }{\mathsf{a} } \rho 
- \frac{\mathsf{d}   }{2\mathsf{a} }, 
\end{equation*}
leading to 
\begin{equation} \label{systemvs}
v_2 = -1 - \frac{\mathsf{b}  }{2\mathsf{a} }, \quad
v_1 = - \frac{\mathsf{c}  }{\mathsf{a} }, \quad
v_0 = - \frac{\mathsf{d}   }{2 \mathsf{a} } .
\end{equation}
For the second claim, in fact, in \cite[Section 4]{morris1982natural}, the author characterized all natural exponential families~(NEFs) with the quadratic variance function (see \cite[Section 2]{morris1982natural} about the terminology), namely the variance is given by a quadratic function of the mean, and it turns out that only six distributions given in the assertion are allowed up to an affine transformation.  
\end{proof}

\begin{Remark}\label{MFT remark}
In the context of applying macroscopic fluctuation theory (MFT) to the microscopic dynamics, one first computes the diffusivity $D(\rho)$ and the mobility $\sigma(\rho)$.
Under our Assumptions \ref{assmp:generator_element_basic} and \ref{assumption:second_order_polymonial}, the diffusivity and mobility of the dynamics are, respectively, a constant and a quadratic function of $\rho$. It has been noted that such a class of models can be treated in a unified manner from the MFT perspective (cf.\cite{Derrida2009Current,Mallick2024Exact}). In particular, \cite{Mallick2024Exact} pointed out that the transformation introduced in \cite{Mallick2022ExactSEP} can be applied to the MFT equations for all models with quadratic mobility, and that by relating the MFT equations to a classical integrable system via this transformation, one can obtain an exact solution of the MFT equations. That is, the examples discussed in this paper fall entirely within the scope of the method of \cite{Mallick2022ExactSEP}. For our examples, it remains an important open problem to rigorously establish the dynamical large deviation principle~(LDP) and to verify whether the results coincide with those predicted by the method of \cite{Mallick2022ExactSEP, Mallick2024Exact}. 
\end{Remark}

\subsection{Hydrodynamics}
Our interest concerning hydrodynamics is to know the macroscopic behavior of the empirical measure associated with the conserved quantity of the system, which is defined below. 
Here, given that the state on each site can also be negative, we define the empirical measure taking values in Sobolev spaces as in the setting of \cite[Section 11]{kipnis1999scaling}, see also \cite{gonccalves2025stochastic}.  
For each integer $z$, let a function $h_z$ be defined {at $u\in\mathbb T$} by 
\begin{equation}
\label{eq:sobolev_CONS_definition}
h_z(u)
= \begin{cases}
\begin{aligned}
& \sqrt 2\cos(2\pi zu) &&\text{ if } z>0, \\
& \sqrt 2\sin(2\pi zu) &&\text{ if } z<0, \\
& 1 &&\text{ if } z=0.
\end{aligned}
\end{cases}
\end{equation}
Then the set $\{h_{z},z\in\mathbb Z\}$ is an orthonormal basis of $L^{2}(\mathbb T)$. 
Consider in $L^{2}(\mathbb{T})$ the operator $1-\Delta$, which is linear, symmetric and positive. A simple computation shows that
$(1-\Delta) h_{z}=\gamma_{z}h_{z}$ where $\gamma_{z}=1+4\pi^2 z^2$.
For an integer $m\geq{0}$, denote by $H^m(\mathbb T)$ the Hilbert space induced by $C^\infty(\mathbb T)$ and the scalar product $\langle\cdot,\cdot\rangle_{m}$ defined by $\langle f,g\rangle_{m}=\langle f,(1-\Delta)^{m}g\rangle$, where $\langle\cdot,\cdot\rangle$ denotes the inner product of $L^{2}(\mathbb{T})$ and denote by $H^{-m}(\mathbb T)$ the dual of $H^m(\mathbb T)$ relatively to this inner product $\langle\cdot,\cdot\rangle_m$. 
{Note that the corresponding norm is naturally induced by this inner product.} 
Denote by $D(\mathbb R_+,H^{-m}(\mathbb T))$  the space of $H^{-m}(\mathbb T)$-valued functions, which are right-continuous and  with left limits, endowed with the Skorohod topology. 
Now, let $\pi^N_t(du)$ be  
defined by 
\begin{equation}
\label{eq:empirical_measure_definition}
\pi^N_t(du)
= \frac{1}{N}\sum_{x\in\mathbb T_N}\eta_x(N^2t) \delta_{x/N}(du)
\end{equation} 
for each $t\ge 0$, where $\delta_{x/N}$ denotes the Dirac measure with total mass on $x/N$.
Moreover, for any $G\in C(\mathbb T)$, we denote $\langle \pi^N_t,G\rangle=(1/N)\sum_{x\in\mathbb T_N}\eta_x(N^2t)G(x/N)$. 
The measure $\pi^N_t$, which is possibly a signed one, is referred to as the empirical measure. 
Hereinafter we do not specify if it is signed or not.  
Then, let us denote by $Q^{N}_m$ the measure on the Skorohod space $D([0,T],H^{-m}(\mathbb T))$ associated with the empirical measure $\pi^{N}$, which is interpret as a measure-valued function on $\mathscr X_N$, i.e., 
$
Q^{N}_m=\mathbb P_N\circ(\pi^{N})^{-1}  .
$
We show that $\pi_t^{N}$ converges, in a sense to be precised later, to a deterministic measure $\pi_t(du)$, which is absolutely continuous with respect to the Lebesgue measure, i.e., $\pi_t(du)=\rho(t,u)du$ and $\rho(t,u)$ is a solution to the heat equation.
But first we need to impose the following condition on the initial distribution of the system. 

\begin{defin}
Let $\{\mu_N\}_{N\in\mathbb N}$ be a sequence of probability measures in the state-space $\mathscr X_N$ and let {$\rho_0:\mathbb T\to\mathbb R$ be an integrable function}.
The sequence $\{\mu_N\}_{N\in\mathbb N}$ is associated to $\rho_0$ if for every $G\in C(\mathbb T)$ and for every $\delta>0$ we have that 
\begin{equation*}
\lim_{N\to\infty}\mu_N\bigg( \eta\in\mathscr X_N \,;\,  
\Big| \frac{1}{N} \sum_{x\in\mathbb T_N} \eta_x G\Big(\frac xN\Big) - \int_{\mathbb T} G(u) \rho_0(u)du \Big| > \delta\bigg) =0.
\end{equation*}
\end{defin}

Next, let us recall the notion of weak solution of the classical heat equation. 

\begin{defin}
\label{def:weak_solu}
Let $\rho_0\in L^2(\mathbb T)$. A measurable function $\rho: [0,T] \times \mathbb{T} \to \mathbb R$ is a weak solution to the heat equation with initial profile $\rho_0$
\begin{equation}  
\label{eq:HDL_heat_equation}
\begin{cases}
\begin{aligned}
& \partial_t \rho(t,u) = D\Delta \rho(t,u)
&& (t,u) \in {(0,T)} \times \mathbb{T},
\\
& \rho(0,u) = \rho_0(u) && u \in \mathbb{T}
\end{aligned}
\end{cases}
\end{equation}
if $\rho \in L^2 ([0,T]\times \mathbb T)$ and for all $t \in [0, T]$ and  $H \in C^{1,2}([0, T] \times \mathbb{T}) $ it holds
\begin{equation}\label{eq:int_sol}
\int_{\mathbb T} \rho(t,u)H(t,u) du
- \int_{\mathbb T} \rho_0(u)H(0,u) du 
- \int_0^t \int_{\mathbb T} \rho(s,u)(\partial_s + \Delta) H(s,u)du ds
= 0. 
\end{equation}
\end{defin}

Above, the space $C^{1,2}([0,T]\times \mathbb T)$ denotes the set of real-valued functions defined on $[0,T]\times \mathbb T$  that are of class $C^1$ on the first variable and $C^2$ on the second variable. 
Existence and uniqueness of weak solution of the  heat equation \eqref{eq:HDL_heat_equation} are classical and well-known. 
Indeed, we can construct a function $\rho(t,u)$ which satisfies the weak form of \eqref{eq:HDL_heat_equation} by using the heat kernel for any square integrable initial function, and it is easy to check that $\rho(t,u)$ is in $L^2([0,T]\times \mathbb T)$.  
On the other hand, uniqueness follows from an analogous argument as in \cite[Theorem A2.4.4]{kipnis1999scaling}. 
Though they assume boundedness of the initial profile, it is easy to extend the result to $L^2(\mathbb T)$ in the linear case.

Now, let us state our main theorem for the hydrodynamic limit. 

\begin{thm}
\label{thm:hydrodynamic_general_main}
Assume that the process $\{\eta(t)\}_{t\ge 0}$ satisfies all items in both Assumptions \ref{assmp:generator_element_basic} and \ref{assumption:second_order_polymonial}. 
Let $\rho_0\in L^2(\mathbb T)$ and  
let $\{\mu_N\}_{N\in\mathbb N}$ be a sequence of probability measures on $\mathscr X_N$ associated to  $\rho_0$.
Moreover, assume that  
\begin{equation}
\label{assump:initial_condition_moment} 
\sup_{N\in\mathbb N}
E_{\mu_N}\bigg[\frac{1}{N} \sum_{x\in\mathbb T_N} \eta_x^2 \bigg] < C 
\end{equation}
for some constant $C>0$. 
Then, for every $t\in [0,T]$, the sequence of empirical measures $\{\pi^{N}_t\}_{N\in\mathbb N}$ converges in probability to the absolute continuous measure $\rho(t,u)du$, that is, for any $\delta>0$ and for any $G\in C^2(\mathbb{T})$ we have 
\begin{equation*}
\lim_{N\to\infty}
Q^{N}_m 
\bigg(\Big|\langle \pi^{N}_{t},G\rangle-\int_{\mathbb T} \rho(t,u)G(u)du \Big|>\delta\bigg) =0 
\end{equation*}
where $\rho(t,u)$ is the unique weak solution to the heat equation \eqref{eq:HDL_heat_equation} provided $m>5/2$, where recall that the measure $Q^{N}_m$ was defined  for the process taking values in Sobolev space $H^{-m}(\mathbb T)$.    
\end{thm}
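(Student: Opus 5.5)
The plan is the standard martingale approach in Sobolev spaces, made possible by the gradient structure of Lemma \ref{lem:gradient_condition}.

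\textbf{Step 1: martingale decomposition.} For $G\in C^2(\mathbb T)$ apply (A6.1) to the linear function $f=\langle\pi^N,G\rangle$, sped up by $N^2$. By Lemma \ref{lem:gradient_condition} and (A3), $L\eta_x=D(\eta_{x+1}+\eta_{x-1}-2\eta_x)$. A summation by parts then gives
\begin{equation*}
\langle\pi^N_t,G\rangle=\langle\pi^N_0,G\rangle+\int_0^t\langle\pi^N_s,D\Delta_NG\rangle ds+M^N_t(G),
\end{equation*}
where $\Delta_N$ is the discrete Laplacian multiplied by $N^2$. This identity is exact: there is no replacement lemma, because the equation is linear.

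\textbf{Step 2: control of the martingale.} By the $N_f$ statement in (A6.1), the quadratic variation of $M^N_t(G)$ equals $N^2\int_0^t\big(L(f^2)-2fLf\big)(\eta(s))\,ds$. Here $L(f^2)$ is computed using (A6.2) together with $L_{0,1}\eta_0^2$. The latter is obtained from $L_{0,1}(\eta_0+\eta_1)^2=0$, which follows from (A2), so $L_{0,1}\eta_0^2=-L_{0,1}(\eta_0\eta_1)$, since by (A3) both squares give the same contribution. Hence the carré du champ is a quadratic polynomial in $(\eta_x,\eta_{x+1})$. Its coefficients are $O(N^{-2}\|\nabla G\|_\infty^2)$ per bond, so
\begin{equation*}
E_N[M^N_t(G)^2]\lesssim \frac{t}{N}\sup_{s\le t}E_N\Big[\frac1N\sum_x(1+\eta_x(s)^2)\Big].
\end{equation*}

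\textbf{Step 3: uniform second-moment bound.} This is where the main obstacle lies. We need $\sup_{s\le N^2T}E_N[N^{-1}\sum_x\eta_x(s)^2]\le C$ without attractiveness. By (A6.2) and the computation of $L_{0,1}\eta_0^2$ above, the family of second moments $E[\eta_x\eta_y]$ and $E[\eta_x]$ is closed under the dynamics. The two-point function $\phi(x,y)$ therefore evolves as a two-particle random walk, with an extra diagonal term coming from the coefficients $\mathsf a,\mathsf b,\mathsf c,\mathsf d$ (related to $v_2,v_1,v_0$ via Theorem \ref{thm:classification_main}), together with a source term linear in the one-point function. The first moments solve the discrete heat equation and are bounded in $\ell^2$ by the initial bound. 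I would pass to the averaged quantity $\sum_x\phi(x,x+z)$ as a function of the relative distance $z$. This quantity solves a one-dimensional random-walk equation with modified rates at $z=0$ and $z=\pm1$. I would then use Duhamel's formula to express the diagonal value $\sum_x E[\eta_x^2]$ through the local time at $0$ of the relative walk over $[0,N^2T]$. That local time is of order $N$, which matches the normalization $1/N$. Because the diagonal perturbation may be of either sign, I would close the estimate with a Gronwall or renewal argument on the local time. The sign constraint coming from (A5), via Lemma \ref{lem:gradient_condition} and $\mathrm{Var}\ge0$, should be what keeps the feedback bounded over the diffusive time window. Combined with Step 2, this gives $E_N[M^N_t(G)^2]=O(1/N)$, and Doob's inequality makes the bound uniform in $t$.

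\textbf{Step 4: tightness and identification.} For $m>5/2$, I would show tightness of $Q^N_m$ in $D([0,T],H^{-m})$ using Mitoma/Aldous criteria applied to the basis $h_z$. The summability $\sum_z\gamma_z^{-m}(1+z^4)<\infty$ controls the drift term through $\|\Delta_Nh_z\|_\infty\lesssim z^2$ and the moment bound of Step 3; this summability is where $m>5/2$ enters. From Step 3, $\sup_t E\|\pi^N_t\|_{-m}^2$ is bounded. The moment bound also gives, via lower semicontinuity, that any limit point is concentrated on trajectories with $\int_0^T\|\pi_t\|_{L^2}^2dt<\infty$, by an energy-type estimate obtained from the two-point function, i.e.\ on densities $\rho\in L^2$. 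Passing to the limit in Step 1, using that the initial measures $\mu_N$ are associated to $\rho_0$ and that the martingale vanishes by Step 2, shows that limit points solve the weak form \eqref{eq:int_sol} with time-dependent $H$, obtained by the same computation applied to $H(s,\cdot)$. By uniqueness of weak solutions, $Q^N_m$ converges to $\delta_{\rho}$, and convergence in probability at fixed $t$ follows, since the limit is continuous in time.
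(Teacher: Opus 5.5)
Your Steps 1, 2 and 4 follow the paper. The skeleton of Step 3 is also the paper's: an averaged two-point function indexed by distance, a walk with modified rates near the diagonal, Duhamel's formula, and a local time of order $N$ on $[0,N^2T]$. However, Step 3 is the only non-routine step, and the mechanism you propose for it does not close. In your variables $\Phi(t,z)=\sum_x\mathbb E_N[\eta_x(t)\eta_{x+z}(t)]$, condition (A6.2) together with the relations \eqref{systemvs} gives
\begin{align*}
\tfrac{d}{dt}\Phi(t,0)&=-4\mathsf a\,\Phi(t,0)+4\mathsf a(v_2+1)\,\Phi(t,1)+4\mathsf a K_N,\\
\tfrac{d}{dt}\Phi(t,1)&=2\mathsf a\,\Phi(t,0)-2\mathsf a(v_2+1)\,\Phi(t,1)+2D\big(\Phi(t,2)-\Phi(t,1)\big)-2\mathsf a K_N,
\end{align*}
with $K_N=v_1\sum_x\rho_x+v_0N$. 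The homogeneous part has row sums $4\mathsf a v_2$ at $z=0$ and $-2\mathsf a v_2$ at $z=1$. So whenever $v_2\neq0$ there is a positive potential at one of these two sites. A Gronwall or Feynman--Kac bound that treats these terms as a perturbation can only give a factor of order $e^{cN}$, because the local time at $\{0,1\}$ is of order $N$.

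The forcing causes the same kind of problem. It is generically of order $N$ per unit time, so "Duhamel times a local time of order $N$" yields $\sum_x\mathbb E_N[\eta_x^2]=O(N^2)$, not $O(N)$. The matching with the normalization $1/N$ that you claim is therefore not there. Nor can (A5) supply the missing sign: it only gives $D>0$ (\cref{lem:gradient_condition}), while the two potentials have opposite signs. What keeps things bounded is an exact cancellation between $z=0$ and $z=1$, not a sign condition.

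The missing idea is the renormalization \eqref{eq:redefine}. Keep the centered covariances for $i\ge1$, but on the diagonal subtract the invariant second moment $(v_2+1)\rho_x^2+v_1\rho_x+v_0$ and divide by $v_2+1$. Because $\mathsf b=-2\mathsf a(v_2+1)$, $\mathsf c=-\mathsf a v_1$ and $\mathsf d=-2\mathsf a v_0$, the system becomes \eqref{eq:correlation_function_time_evolution}, that is, $\frac{d}{dt}\phi=\mathscr L\phi+\mathfrak g$. This is where the product invariant measures are really used. Here $\mathscr L$ is a \emph{conservative} nearest-neighbour generator on $\{0,\dots,\lfloor N/2\rfloor\}$, with rates $4\mathsf a$ from $0$ to $1$, $2\mathsf a(v_2+1)$ from $1$ to $0$, and $2D$ elsewhere. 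These rates are nonnegative given $v_2>-1$ (\cref{lem:special_case_condition_bernoulli}) and $\mathsf a>0$. The forcing $\mathfrak g$ is supported on $\{0,1\}$ and is only of size $\|\nabla^+\rho(t)\|^2_{\ell^2(\mathbb T_N)}$.

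Duhamel's formula and the maximum principle for $\mathscr L$ then bound $\phi$ with no Gronwall step at all. The paper uses the local-time lemma here. In fact the energy identity $\int_0^\infty\sum_x(\rho_{x+1}(s)-\rho_x(s))^2\,ds\le(2D)^{-1}\sum_x\rho_x(0)^2=O(N)$ already gives $\|\phi(t,\cdot)\|_\infty\le\|\phi(0,\cdot)\|_\infty+O(N)$. This suffices for \eqref{eq:moment_estimate} under \eqref{assump:initial_condition_moment}.

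There is also a minor slip in Step 2: (A3) does not give $L_{0,1}\eta_0^2=L_{0,1}\eta_1^2$. Applying (A2) to $\eta_0(\eta_0+\eta_1)$ gives $L_{0,1}\eta_0^2=D(\eta_1^2-\eta_0^2)-L_{0,1}(\eta_0\eta_1)$. This is still quadratic, so your conclusion in Step 2 survives.
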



\begin{Remark}
Regarding the condition $L_{0,1}(\eta_0\eta_1) \not\equiv  0$ in \textnormal{(A6.2)}, we remark that if $L_{0,1}(\eta_0\eta_1) \equiv 0$, then $L (\sum_{x \in \mathbb{T}_N}\eta_x^2)=0$ holds under Assumption \ref{assmp:generator_element_basic}. 
Indeed, note that 
$$
L_{0,1} \bigg( \sum_{x \in \mathbb{T}_N}\eta_x^2\bigg)
= L_{0,1}(\eta_0^2 +\eta_1^2)=  L_{0,1}((\eta_0+\eta_1)^2 - 2\eta_0\eta_1)=0.
$$
Hence, if $L_{0,1}(\eta_0\eta_1) \equiv 0$, under only Assumption \ref{assmp:generator_element_basic}, \cref{thm:hydrodynamic_general_main} holds by the same strategy in \cref{sec:proof_completion}, whereas \cref{thm:classification_main} does not.  
In particular, if $L_{0,1}f(\eta)=f(\sigma_{0,1}\eta)-f(\eta)$, then Assumptions \ref{assmp:generator_element_basic} and \ref{assumption:second_order_polymonial} hold except for the condition $L_{0,1}(\eta_0\eta_1)\not\equiv  0$, and for any $S$, any spatially homogeneous product measures on $S^{\mathbb{T}_N}$ are invariant for the dynamics.
\end{Remark}

\section{Examples of interacting systems satisfying our assumptions} 
\label{sec:inv_measure_and_process}
Recall from \cref{thm:classification_main} that the variance of the state variable with respect to the invariant measure is given as a degree-two polynomial of the parameter $\rho$.    
Moreover, again by Theorem \ref{thm:classification_main} only six distributions having quadratic variance functions - normal, Poisson, gamma, binomial, negative binomial~(NB), generalized hyperbolic secant~(GHS) distributions, are allowed as invariant measures of each process.  
To denote one of these six distributions, we use the notation $\sigma\in \{ \mathrm{Normal}, \mathrm{Poisson}, \mathrm{Gamma}, \mathrm{Binomial}, \mathrm{NB}, \mathrm{GHS} \}$ and let $V^\sigma$ be the corresponding variance function, which takes the form of
\begin{equation*}
V^\sigma(\rho)=v_0+v_1\rho + v_2\rho^2
\end{equation*}
for some $v_0$, $v_1$ and $v_2$. 
In this part, we give examples of large-scale interacting systems which admit product invariant measure whose common marginal is one of the aforementioned six distributions, and satisfy the assumptions detailed above.  
Here we check the degree-preserving properties in assumptions \textnormal{(A6.1)} and \textnormal{(A6.2)} whereas the others can easily be verified. 
In particular, for \textnormal{(A4)} one can check the stationary condition or the detailed balance condition \eqref{eq:redistribution_detailed_balance} given below, while for the non-negativity of the Dirichlet form \textnormal{(A5)}, see ~\cite[A1.10 of Section]{kipnis1999scaling}.  
We show that redistribution-type interactions always define a process with the prerequisites, however this is not the case for interacting systems as there is no known corresponding model for the GHS distribution, see \cref{rem:GHS_not_exists}.  
A redistribution process on the state-space $\mathscr X_N=S^{\mathbb T_N}$ is generated by the following operator: 
\begin{equation}
\label{eq:generator_redistribution}
L^{\mathrm{Red}}
=
{\frac{1}{2} \sum_{x,y \in\mathbb T_N, |x-y|=1} 
\nabla_{x,y} }
\end{equation}
where $\nabla_{x,y}$ is defined by 
\begin{equation}\label{eq:nabla}
\nabla_{x,y} f(\eta)
= \int_{S} q_\alpha(\eta_x,\eta_y)
[f(\eta-\alpha \delta_x + \alpha \delta_y) - f(\eta )] d\alpha 
\end{equation}
for any $f\in C(\mathscr X_N)$ with some non-negative rate $q_\alpha:S\times S\to\mathbb R$ that we describe below. 
Above, $\delta_x$ denotes the configuration with unitary value at site $x\in \mathbb T_N$ and zero elsewhere.
Additionally, $d\alpha$ denotes the Lebesgue measure if $S=\mathbb R$ or $\mathbb R_+$, whereas if $S\subset \mathbb Z$ the integration in \eqref{eq:nabla} is read as summation .
Let $p^\sigma$ be the probability density function of one of the six distributions: $\sigma\in \{ \mathrm{Normal}, \mathrm{Poisson}, \mathrm{Gamma}, \mathrm{Binomial}, \mathrm{NB}, \mathrm{GHS} \}$. 
Now, let us take the rate $q_\alpha=q_\alpha^\sigma$ in \eqref{eq:generator_redistribution} as  
\begin{equation}
\label{eq:redistribution_jump_rate_without_indicator} 
\begin{aligned}
q^\sigma_\alpha(\eta_x,\eta_y)
&=c^\sigma_\alpha(\eta_x\eta_y) \mathbf{1}_{\{{\eta_x,\eta_y,}\eta_x-\alpha,\eta_y+\alpha \ \in \ S\}} =\frac{
p^\sigma(\eta_x-\alpha)  p^\sigma(\eta_y+\alpha)}{Z^\sigma(\eta_x+\eta_y) }
\mathbf{1}_{\{{\eta_x,\eta_y,}\eta_x-\alpha,\eta_y+\alpha \ \in \ S\}} 
\end{aligned}
\end{equation} 
with 
\begin{equation}
\label{eq:partition_function_redistribution_def}
Z^\sigma(\eta_x+\eta_y)
= \int_S p^\sigma(\eta_x-\alpha')  p^\sigma(\eta_y+\alpha')
\mathbf{1}_{\{{\eta_x,\eta_y,} \eta_x-\alpha',\eta_y+\alpha'\in S\} }
d\alpha',
\end{equation}
where we used the fact that the normalizing factor $Z^\sigma$ depends only on $\eta_x+\eta_y$, which is verified by a transformation $\alpha'\mapsto \eta_x-\alpha'$. 
In other words, the rate is normalized in such a way that 
\begin{equation}
\label{eq:redistribution_rate_normalization}
\int_S q^\sigma_\alpha(\eta_x,\eta_y) d\alpha=1
\end{equation}
for any $\eta_x,\eta_y\in S$. 
This means that the redistribution rate $q^\sigma_\cdot(\eta_x,\eta_y)$ defines another probability distribution, which becomes normal, binomial, beta, hypergeometric, negative-hypergeometric or GHS, provided the marginal is given by normal, Poisson, gamma, binomial, NB or GHS, respectively. 

Here, following a basic recipe to construct a Markov process from an operator, see \cite[Section 3.3]{liggett2010continuous}, we can show the existence of a Feller process $\{ \eta(t):t\ge 0\}$ on $\mathscr X_N$ with generator $L^{\mathrm{Red}}$.
For readers' convenience, we explain some details on this construction of the dynamics in \cref{sec:construction_dynamics}.

Additionally, we note that the process generated by $L^{\mathrm{Red}}$ admits the product invariant measure whose common marginal is given by one of the six distributions, due to the detailed balance condition: 
\begin{equation}
\label{eq:redistribution_detailed_balance}
q^\sigma_\alpha(\eta_x,\eta_y) 
p^\sigma(\eta_x) p^\sigma(\eta_y) 
= q^\sigma_\alpha(\eta_y + \alpha,\eta_x - \alpha) p^\sigma(\eta_x -\alpha) p^\sigma(\eta_y + \alpha). 
\end{equation}
Moreover, it is easy to see that $c_\alpha^\sigma$ and $q^\sigma_\alpha$ are invariant under the transformation $\alpha\mapsto \eta_x-\eta_y-\alpha$. 
Therefore, we have that 
\begin{equation}
\label{eq:redistribution_degree-one_preservation}
\begin{aligned}
\int_S \alpha q^\sigma_\alpha(\eta_x,\eta_y)
d\alpha
&=\int_S(\eta_x-\eta_y-\alpha) q^\sigma_\alpha(\eta_x,\eta_y)
d\alpha
=\frac{1}{2}(\eta_x-\eta_y)
\end{aligned}
\end{equation}
where we used \eqref{eq:redistribution_rate_normalization}, and thus the preservation of degree-one terms is straightforward.
This particularly yields $L\eta_x=\Delta \eta_x$ for all cases. 
Moreover, for the second-order polynomial, it is enough to compute the second moment with respect to the probability measure $q^\sigma_\alpha(\eta_x,\eta_y) d\alpha$ and see that it takes a quadratic form: 
\begin{equation}
\label{eq:adjoint_probability_second_moment_quadratic}
\begin{aligned}
M_2^\sigma(\eta_x,\eta_y)
&\coloneqq \int_S \alpha^2 q^\sigma_\alpha(\eta_x,\eta_y) d\alpha\\
&= \Gamma^\sigma_{2,0}\eta_x^2 + \Gamma^\sigma_{1,1}\eta_x\eta_y
+ \Gamma^\sigma_{0,2}\eta_y^2 
+ \Gamma^\sigma_{1,0}\eta_x + \Gamma^\sigma_{0,1}\eta_y
+\Gamma^\sigma_{0,0}.     
\end{aligned}
\end{equation}
Indeed, noting $L^{\textrm{Red}}_{x,x+1}=\nabla_{x,x+1}+\nabla_{x+1,x}$ and  
\begin{equation*}
\begin{aligned}
L^{\textrm{Red}}_{0,1}(\eta_0\eta_1)
&=\int_S 
{q^\sigma_\alpha(\eta_0,\eta_1)} [(\eta_0-\alpha)(\eta_1+\alpha)-\eta_0\eta_1]d\alpha 
\\&\quad
+ \int_S {q^\sigma_\alpha(\eta_1,\eta_0)} [(\eta_0+\alpha)(\eta_1-\alpha)-\eta_0\eta_1]d\alpha \\
&=(\eta_0-\eta_1)^2 
-M_2(\eta_0,\eta_1) - M_2(\eta_1,\eta_0)\\
&= (1-\Gamma^\sigma_{2,0}-\Gamma^\sigma_{0,2})(\eta_0^2+\eta_1^2) 
{- (2+2\Gamma^\sigma_{1,1})\eta_0\eta_1}
- (\Gamma^\sigma_{1,0}+\Gamma^\sigma_{0,1})(\eta_0+\eta_1)
-2\Gamma^\sigma_{0,0}, 
\end{aligned}
\end{equation*}
which satisfies \textnormal{(A6.2)}. 
Hence, it remains to show for each of the six distributions that \eqref{eq:adjoint_probability_second_moment_quadratic} holds. 

\begin{Remark} 
Note that each redistribution model corresponds to the thermalized version of its associated interacting system, whenever such version exists. It would be interesting to investigate whether the Harmonic model and the GHS model can also be obtained as thermalization limit of appropriate interacting systems.  We leave this to a future work.  
We also remark that interacting systems of discrete occupation variables, as for example  independent random walkers, the partial exclusion process and the inclusion process are special cases of misanthrope processes with decomposable rates. On the other hand the redistribution-type models of discrete occupation variables are mass migration processes, since more than one particle per site is allowed to jump \cite{fajfrova2016invariant}. Finally, the Harmonic model is also a mass migration process with special decomposable rate which only depend on the occupation number of the departure site, i.e. of zero-range type. 
\end{Remark}

\subsection{Normal distribution}
Take $S=\mathbb R$. Let $\nu_{\rho}^{\mathrm{Normal}}$ be the product Gaussian measure whose common marginal is given by the normal distribution: 
\begin{equation*}
\nu^{\mathrm{Normal}}_\rho(\eta_0\le z)
=\int_{-\infty}^z
p^{\mathrm{Normal}}(y)dy,\quad
p^{\mathrm{Normal}}(y)
=  (2\pi \sigma^2)^{-1/2}e^{-(y-\rho)^2/(2\sigma^2)} .
\end{equation*}
In this case, the variance function $V^{\mathrm{Normal}}(\rho)=\sigma^2$ becomes constant.

\subsubsection{Redistribution}
Here let us consider a process on $\mathscr X_N$ with infinitesimal generator given as in \eqref{eq:generator_redistribution} with $\nabla_{x,y}$ which is defined by  
\begin{equation} \label{redNormal}
\nabla_{x,y}f(\eta)
=\int_{-\infty}^\infty q^{\mathrm{Normal}}_\alpha(\eta_x,\eta_y) 
[f(\eta-\alpha\delta_x+\alpha \delta_y) - f(\eta)]d\alpha 
\end{equation}
for any $f:\mathscr X_N\to\mathbb R$ and in this case 
we have 
\begin{equation*}
\begin{aligned}
Z^{\mathrm{Normal}}(\eta_x+\eta_y)
&=\int_{-\infty}^\infty
p^{\mathrm{Normal}}(\eta_x-\alpha)
p^{\mathrm{Normal}}(\eta_y+\alpha) d\alpha\\
&= (4\pi \sigma^2)^{-1/2} 
\exp(-\tfrac{1}{4\sigma^2} (\eta_x+\eta_y-2\rho )^2 )
\end{aligned}
\end{equation*}
and the rate satisfies the normalization \eqref{eq:redistribution_rate_normalization}, and thus preservation of degree-one term is deduced from \eqref{eq:redistribution_degree-one_preservation}. Moreover,
the non-negative rate reads 
\begin{align*}
c^{\mathrm{Normal}}_\alpha(\eta_x,\eta_y)
&= \frac{1}{\sqrt{\pi\sigma^2}} 
e^{-\tfrac{1}{2\sigma^2}(\eta_x-\alpha-\rho)^2} 
e^{-\tfrac{1}{2\sigma^2}(\eta_y+\alpha-\rho)^2} 
e^{\tfrac{1}{4\sigma^2}(\eta_x+\eta_y-2\rho)^2} 
= \frac{1}{\sqrt{\pi\sigma^2}}e^{-\tfrac{1}{\sigma^2}\left(\alpha-\tfrac{\eta_x - \eta_y}{2}\right)^2}   
\end{align*}
Then, by definition, the process is reversible with respect to the measure $\nu_\rho^{\mathrm{Normal}}$ by the detailed balance condition \eqref{eq:redistribution_detailed_balance}. 
Now, let us check that \eqref{eq:adjoint_probability_second_moment_quadratic} holds, so that degree-two terms are preserved under the dynamics. 
This is straightforward since {it is the second  moment of a random variable distributed as $\mathcal{N}(\tfrac{\eta_x-\eta_y}{2}, \tfrac{\sigma^2}{2})$}
\begin{equation*}
\begin{aligned}
M_2^{\mathrm{Normal}}(\eta_x,\eta_y)
=\frac{1}{\sqrt{\pi \sigma^2}} \int_{-\infty}^\infty \alpha^2 \exp(-\tfrac{1}{\sigma^2}(\alpha-\tfrac{\eta_x-\eta_y}{2})^2) d\alpha 
= \frac{\sigma^2}{2} + \frac{1}{4}(\eta_x-\eta_y)^2 \;,
\end{aligned}
\end{equation*}
which is a quadratic polynomial in $\eta_x$ and $\eta_y$.

\subsubsection{Interacting diffusion} 
Here, let us give other examples for which the product Gaussian measure $\nu_\rho^{\mathrm{Normal}}$ is invariant under the dynamics. 
Typically, the normal distribution arises as an invariant measure of interacting Brownian motions, which is described by a system of stochastic differential equations. 
Let us firstly consider the following Ginzburg-Landau model whose infinitesimal generator is given by 
\begin{equation*}
L^{\mathrm{GL}}f(\eta)
= \frac{1}{4}\sum_{x,y\in\mathbb T_N,\, |x-y|=1}(\partial_{y}-\partial_x)^2 f(\eta)
-\frac{1}{4\sigma^2}\sum_{x,y\in\mathbb T_N,\, |x-y|=1}(\eta_{y}-\eta_x)(\partial_{y}-\partial_x)f(\eta) ,
\end{equation*}
for any smooth function $f:\mathscr X_N\to\mathbb R$, where $\partial_x=\partial/\partial\eta_x$ denotes the derivative with respect to $\eta_x$. 
Letting $\partial_x^*$ be the adjoint operator of $\partial_x$ with respect to the measure $\nu_{\rho}^{\mathrm{Normal}}$, a simple computation shows $\partial_x^*=-\partial_x +(\eta_x-\rho)/\sigma^2$. 
Then, we have that $L^{\mathrm{GL},*}=L^{\mathrm{GL}}$, which particularly means that the measure $\nu_\rho^{\mathrm{Normal}}$ is invariant under the dynamics of the Ginzburg-Landau model. 
One might wonder what is the thermalization limit of the Ginzburg-Landau diffusion. Considering the bond $(x,y)$, its reversible measure is $p^{\mathrm{Normal}}(\eta_x) p^{\mathrm{Normal}}(\eta_y)$ and computing the density of $\eta_x$ conditioning on having the total bond momenta $\eta_x + \eta_{y}$ fixed, one gets exactly the generator \eqref{redNormal} after performing a suitable change of variable $\alpha = u - \eta_y$.

\subsection{Poisson distribution} 
Next, we consider the case where the variance is a linear function of the density. 
This is only possible for the Poisson distribution, taking $S=\mathbb N_0$.  
Let $\nu_\rho^{\mathrm{Poisson}}$ be the product Poisson distribution given by 
\begin{equation*}
\nu_\rho^{\mathrm{Poisson}}(\eta_0=k)
= p^{\mathrm{Poisson}}(k)
=\frac{1}{k!}\rho^k e^{-\rho},
\end{equation*}
for each $k\in S$.  
Then the variance function is given as $V^{\mathrm{Poisson}}(\rho)=\rho$.

\subsubsection{Redistribution} 
In this case, the process  is generated by \eqref{eq:generator_redistribution} where $\nabla_{x,y}$ is defined by 
\begin{equation*}
\nabla_{x,y}f(\eta)
= \sum_{\alpha =-\eta_y}^{\eta_x} c^{\mathrm{Poisson}}_\alpha(\eta_x,\eta_y) 
[f(\eta-\alpha\delta_x + \alpha \delta_y) - f(\eta)] 
\end{equation*}

for any $f:\mathscr X_N\to\mathbb R$, and  from an elementary identity we get 
\begin{equation*}
Z^{\mathrm{Poisson}}(\eta_x+\eta_y)
=\sum_{\alpha=-\eta_y}^{\eta_x} 
p^{\mathrm{Poisson}} (\eta_x-\alpha) p^{\mathrm{Poisson}} (\eta_y+\alpha) 
= \frac{1}{(\eta_x+\eta_y)!} (2\rho)^{\eta_x+\eta_y} e^{-2\rho}.
\end{equation*} Moreover, the non-negative rate is given by 
\begin{equation*}
c^{\mathrm{Poisson}}_\alpha(\eta_x,\eta_y) 
= \frac{1}{2^{\eta_x+\eta_y}} 
\binom{\eta_x+\eta_y}{\eta_x-\alpha}
= \frac{1}{2^{\eta_x+\eta_y}} 
\binom{\eta_x+\eta_y}{\eta_y+\alpha}. 
\end{equation*}
Therefore, the rate $c_\alpha^{\mathrm{Poisson}}$ is normalized as \eqref{eq:redistribution_rate_normalization}, and thus we know from \eqref{eq:redistribution_degree-one_preservation} that the process preserves degree-one occupation variables. 
Additionally, to check \eqref{eq:adjoint_probability_second_moment_quadratic}, 
\begin{equation*}
\begin{aligned}
M_2^{\mathrm{Poisson}}(\eta_x,\eta_y)
&= \frac{1}{2^{\eta_x+\eta_y}} 
\sum_{\beta=0}^{\eta_x+\eta_y} 
(\eta_x-\beta)^2 
\binom{\eta_x+\eta_y}{\beta}\\
&= \frac{1}{2^{\eta_x+\eta_y}} 
\sum_{\beta=0}^{\eta_x+\eta_y} 
\big[\beta(\beta-1) 
-(2\eta_x-1)\beta 
+ \eta_x^2\big] 
\binom{\eta_x+\eta_y}{\beta}\\
&= \frac{1}{4}(\eta_x+\eta_y)(\eta_x+\eta_y-1) 
- \frac{1}{2}(\eta_x+\eta_y) (2\eta_x-1) 
+ \eta_x^2 \\
&= \frac{1}{4} (\eta_x^2-2\eta_x\eta_y+\eta_y^2 + \eta_x+\eta_y),
\end{aligned}
\end{equation*}
which yields the preservation of degree-two terms as well. 
Let us comment here that this model was originally introduced in \cite[Section 5]{carinci2013duality} as a model arising from independent random walkers after an instantaneous thermalization limit.

\subsubsection{Interacting particles} 
As another example of associated processes, let us consider the zero-range process with jump rate equal to the occupation variable $\eta_x$, which, in fact, turns out to be independent random walks. 
See \cite[Section 2]{kipnis1999scaling} for more expositions on zero-range processes.   
The infinitesimal generator of the process is given by 
\begin{equation*}
L^{\mathrm{IRW}}f(\eta) 
=\frac{1}{2}  \sum_{x,y\in\mathbb T_N,{|x-y|=1}} \eta_x [f(\eta^{x,y})- f(\eta)]
\end{equation*}
for any $f:\mathscr X_N\to\mathbb R$ where $\eta^{x,y}$ denotes a configuration where a particle jumps from $x$ to $y$:
\begin{equation*}
(\eta^{x,y})_z
= \begin{cases}
\begin{aligned}
& \eta_x-1 && \text{ if } z=x,\\
& \eta_y+1 && \text{ if } z=y,\\
& \eta_z && \text{ otherwise. }
\end{aligned}
\end{cases}
\end{equation*}
Then, it is not hard to check that $E_{\nu_\rho^{\mathrm{Poisson}}}[L^{\mathrm{IRW}}f(\eta)]=0$ for any $f:\mathscr X_N\to\mathbb R$, and thus the process of independent random walks is reversible under the measure $\nu_\rho^{\mathrm{Poisson}}$, and the process preserves the degree up to two.

\subsection{Gamma distribution}
Let $\nu_\rho^{\mathrm{Gamma}}$ be a product measure whose common marginal is given by a Gamma distribution with shape parameter $2\mathfrak s > 0$ and free scale parameter $\rho/(2\mathfrak s) > 0$: 
\begin{equation*}
\nu_{\rho}^{\mathrm{Gamma}}(\eta_0\le z) 
= \int_0^\infty p^{\mathrm{Gamma}}(y)dy
= \frac{1}{\Gamma(2\mathfrak s) (\rho/(2\mathfrak s))^{2\mathfrak s}} \int_0^z y^{2\mathfrak s-1} e^{-2\mathfrak sy / \rho } dy 
\end{equation*}
for any $z\ge 0$, where $\Gamma(z)=\int_0^\infty t^{z-1}e^{-t}dt$ is the Gamma function. 
Note that the measure is parametrized by density, i.e., $E_{\nu^{\mathrm{Gamma}}_\rho}[\eta_0]=\rho$ and the variance function is purely quadratic: $V^{\mathrm{Gamma}}(\rho)=\rho^2/(2\mathfrak s)$.

\subsubsection{Redistribution} 
Now, let us consider a Markov process on the configuration space $\mathscr X_N$ with redistribution-type interaction. 
The corresponding process is referred to as the generalized Kipnis-Marchioro-Presutti~(KMP) model, which was introduced in \cite{giardina2009duality} as a generalization of the original energy transport model \cite{kipnis1982heat}.  
The readers can find more expositions on this process in the previous work \cite[Section 2.1.1]{franceschini2026hydrodynamic}, but the parametrization of the measure is slightly different.  
In the definition of the generator \eqref{eq:generator_redistribution}, the operator $\nabla_{x,y}$ is defined for any $f:\mathscr X_N\to\mathbb R$ by  
\begin{equation*}
\nabla_{x,y}f(\eta)
= \int_{-\eta_y}^{\eta_x}  
c_\alpha^{\mathrm{Gamma}}(\eta_x,\eta_y)
[f(\eta-\alpha \delta_x+\alpha\delta_y) - f(\eta)]d\alpha.
\end{equation*}
From an elementary computation regarding the Beta distribution, we note that  
\begin{equation*}
Z^{\mathrm{Gamma}}(\eta_x+\eta_y)
=\int_{-\eta_y}^{\eta_x}
(\eta_x-\alpha)^{2\mathfrak s-1} 
(\eta_y+\alpha)^{2\mathfrak s-1} d\alpha
= (\eta_x+\eta_y)^{4\mathfrak s-1}\frac{\Gamma(2\mathfrak s)^2}{ \Gamma(4 \mathfrak s)},
\end{equation*} and then
the rate reads  as 
\begin{equation*}
c_\alpha^{\mathrm{Gamma}}(\eta_x,\eta_y)
= \frac{\Gamma(4\mathfrak s)}{\Gamma(2\mathfrak s)^2}    \dfrac{(\eta_x - \alpha)^{2\mathfrak s-1}(\eta_{y} + \alpha)^{2\mathfrak s-1}}{(\eta_x + \eta_{y})^{4\mathfrak s-1}} \;.
\end{equation*}
Note that the classical parametrization, as in \cite{kipnis1982heat}, is obtained under the change of variables $\alpha = \eta_x - u(\eta_x + \eta_y)$, with $u \in [0,1]$. 
By definition, the process is reversible with respect to the measure $\nu_\rho^{\mathrm{Gamma}}$.  
From the identities above it can be easily checked that the normalizing condition \eqref{eq:redistribution_rate_normalization} is satisfied.  
Therefore, as before, we know from \eqref{eq:redistribution_degree-one_preservation} that degree-one terms are preserved under the action of the generator. 
Moreover, for \eqref{eq:adjoint_probability_second_moment_quadratic}, note that 
\begin{equation*}
\begin{aligned}
M_2^{\mathrm{Gamma}}(\eta_x,\eta_y)
&= \int_{-\eta_y}^{\eta_x}
\alpha^2 c_\alpha^{\mathrm{Gamma}}(\eta_x,\eta_y)d\alpha \\
&= \frac{\Gamma(4\mathfrak s)}{\Gamma(2\mathfrak s)^2} 
(\eta_x+\eta_y)^2 
\int_0^1 \Big( \beta -\frac{\eta_x}{\eta_x+\eta_y}\Big)^2  
\beta^{2\mathfrak s-1} (1-\beta)^{2\mathfrak s-1} d\beta\\
&= \frac{\Gamma(4\mathfrak s)}{\Gamma(2\mathfrak s)^2} 
(\eta_x+\eta_y)^2 
\int_0^1 \Big( \beta-\frac{1}{2} 
- \frac{\eta_x-\eta_y}{2(\eta_x+\eta_y)} \Big)^2  
\beta^{2\mathfrak s-1} (1-\beta)^{2\mathfrak s-1} d\beta\\
&= \frac{(\eta_x+\eta_y)^2}{4(4\mathfrak s+1)} 
+ \frac{(\eta_x-\eta_y)^2}{4}
= \frac{2\mathfrak s+1}{2(4\mathfrak s+1)}  
(\eta_x^2+\eta_y^2) 
- \frac{2\mathfrak s}{4\mathfrak s+1} \eta_x\eta_y  
\end{aligned}
\end{equation*}
where we used the fact that the mean (resp. variance) of the Beta distribution with parameter $(2\mathfrak s,2\mathfrak s)$ is given by $1/2$ (resp. $1/[4(4\mathfrak s+1)]$).
Hence the generalized KMP model preserves degree-two terms.

\subsubsection{Interacting diffusion} 
Here, let us give an example {known as the} Brownian energy process~(BEP) in the class of interacting diffusions. 
The Brownian energy process was introduced in \cite{giardina2009duality} by considering an energy (i.e., the square of the variables) of the Brownian momentum process. 
The generator of the process is defined by 
\begin{equation*}
L^{\mathrm{BEP}}f(\eta) 
=\sum_{x,y\in\mathbb T_N,\,  |x-y|=1}  
\big( 2\eta_x\eta_{y}(\partial_{\eta_x}-\partial_{\eta_{y}})^2 
-\mathfrak{s} (\eta_x-\eta_{y})(\partial_{\eta_x}-\partial_{\eta_{y}} \big)
\big) f(\eta), 
\end{equation*}
which acts on smooth functions $f$ on $\mathscr X_N$ and every $\eta\in \mathscr X_N$.  
Then by \cite[Theorem 6.1]{giardina2009duality}, an invariant measure of the process is given by the product measure whose common marginal is the chi-square distribution $\chi_{\mathfrak s}^2$ with $\mathfrak s$-degree of freedom, and it is a special case of the gamma distribution with shape parameter $\mathfrak s/2$ and scale parameter $2$. 
Moreover, note that it is clear from the form of the generator that the  Brownian energy process preserves the degree of polynomials with arbitrary order. 

\subsubsection{Continuous harmonic model}
This family of energy models arise as the Markov duals of the Harmonic models presented in subsection \ref{harmonicmodel}, see \cite{franceschini2023integrable}.
We consider the dynamics on the one-dimensional discrete torus, which is conservative, noting that Proposition 2.4 of \cite{kim2025spectral} establishes that the model is well-defined on any finite graph.

The generator of the continuous harmonic models labeled by the spin parameter $\mathfrak s>0$ is given by \eqref{eq:generator_redistribution} with $\nabla_{x,y}=\nabla^{\mathrm{CHarm}}_{x,y}$ where $\nabla^{\mathrm{CHarm}}_{x,y}$ is defined for each $f:\mathscr X_N\to \mathbb R$ by  
\begin{equation*}
\nabla^{\mathrm{CHarm}}_{x,y} f(\eta) 
= \int_0^{\eta_x} \dfrac{1}{\alpha} \left( 1- \frac{\alpha }{\eta_x}\right)^{2\mathfrak{s} -1} \left[ f(\eta - \alpha \delta_x + \alpha \delta_y) - f(\eta) \right] \;.
\end{equation*}
In other words, an energy $\alpha > 0$ moves from site $x$ to site $y$ at rate 
\begin{equation}
c^{\mathrm{CHarm}}_\alpha (\eta_x,\eta_y)
= \dfrac{1}{\alpha} \left( 1- \frac{\alpha }{\eta_x}\right)^{2\mathfrak{s} -1}  \mathbf{1}_{\lbrace \alpha \leq \eta_x \rbrace} \;.
\end{equation}
The duality relation with the harmonic model of subsection \ref{harmonicmodel} was proved in \cite{franceschini2023integrable} and as a consequence allows to infer stationary (see Theorem 2.1 which can easily be adapted with our parametrization) by classifying all the moments. Moreover, having a product duality function of moments-type with a dual conservative interacting particle systems guarantees that the generator conserves any degree of the polynomials of state variables.

\subsection{Binomial distribution}
Let us take $S=\{0,1,\ldots,\kappa\}$ with $\kappa\in\mathbb N$, and let $\nu_\rho^{\mathrm{Binomial}}$ be the product measure whose common marginal is given by the binomial distribution with mean $\rho$:
\begin{equation*}
\nu_\rho^{\mathrm{Binomial}}(\eta_0=m) 
=p^{\mathrm{Binomial}}(m)=\binom{\kappa}{m} 
(\rho/\kappa)^m(1-\rho/\kappa)^{\kappa-m}, \quad {\textrm{for} \quad m=0,\cdots, \kappa}. \end{equation*}
Note that the variance function for this case is given by $V^{\mathrm{Binomial}}(\rho)=-\rho^2/\kappa + \rho$. 

\subsubsection{Redistribution}

In this case, the generator \eqref{eq:generator_redistribution} is acting on $f:\mathscr X_N\to\mathbb R$ as
\begin{equation*}
\nabla_{x,y}f(\eta)
= \sum_{\alpha=\max\{\eta_x-\kappa,-\eta_y\} }^{\min\{\eta_x, \kappa-\eta_y\} } 
c^{\mathrm{Binomial}}_\alpha(\eta_x,\eta_y) 
[f(\eta-\alpha\delta_x+\alpha\delta_y) - f(\eta)].
\end{equation*}
Note that by a change of variable $\beta=\eta_x-\alpha$, we know that  
\begin{equation*}
\begin{aligned}
Z^{\mathrm{Binomial}}(\eta_x+\eta_y)
&=(\rho/\kappa)^{\eta_x+\eta_y}(1-\rho/\kappa)^{2\kappa-(\eta_x+\eta_y)}\sum_{\alpha=\max\{\eta_x-\kappa,-\eta_y\} }^{\min\{\eta_x, \kappa-\eta_y\} } 
\binom{\kappa}{\eta_x-\alpha}
\binom{\kappa}{\eta_y+\alpha}\\
&= (\rho/\kappa)^{\eta_x+\eta_y}(1-\rho/\kappa)^{2\kappa-(\eta_x+\eta_y)}\sum_{\beta=\max\{0,\eta_x+\eta_y-\kappa\}}^{ \min\{\eta_x+\eta_y,\kappa \} } 
\binom{\kappa}{\beta}
\binom{\kappa}{\eta_x+\eta_y-\beta}
\\&=(\rho/\kappa)^{\eta_x+\eta_y}(1-\rho/\kappa)^{2\kappa-(\eta_x+\eta_y)} \binom{2\kappa}{\eta_x+\eta_y}.
\end{aligned}
\end{equation*}
Finally, 
\begin{equation*}
c^{\mathrm{Binomial}}_\alpha(\eta_x,\eta_y)
= \binom{\kappa}{\eta_x-\alpha}\binom{\kappa}{\eta_y+\alpha} 
\binom{2\kappa}{\eta_x+\eta_y}^{-1},
\end{equation*}
which follows from the fact that the probability mass function of the hypergeometric distribution is normalized to be one. 
As a consequence, the rate $c_\alpha$ is normalized as required in \eqref{eq:redistribution_rate_normalization}. 
Therefore, again we know that the process preserves the degree up to one by \eqref{eq:redistribution_degree-one_preservation}. 
On the other hand, notice that 
\begin{equation*}
\begin{aligned}
M_2^{\mathrm{Binomial}}(\eta_x,\eta_y)
&=\sum_{\alpha=\max\{\eta_x-\kappa,-\eta_y\} }^{\min\{\eta_x, \kappa-\eta_y\} } 
\alpha^2 c^{\mathrm{Binomial}}_\alpha(\eta_x,\eta_y) \\
&\quad= 
\binom{2\kappa}{\eta_x+\eta_y}^{-1} 
\sum_{\beta=\max\{0,\eta_x+\eta_y-\kappa\}}^{ \min\{\eta_x+\eta_y,\kappa \} }
(\beta-\eta_x)^2 
\binom{\kappa}{\beta}
\binom{\kappa}{\eta_x+\eta_y-\beta} 
\\
&\quad=
\frac{(\eta_x+\eta_y)(2\kappa-\eta_x-\eta_y)}{4(2\kappa-1)} 
- (\eta_x-\eta_y) \frac{\eta_x+\eta_y}{2} 
+ \eta_x^2 - \frac{(\eta_x+\eta_y)^2}{4} \\
&\quad=\frac{\kappa-1}{4\kappa-2}(\eta_x^2 
+\eta_y^2 )
-\frac{\kappa}{2\kappa-1} \eta_x\eta_y
+ \frac{\kappa}{4\kappa-2}(\eta_x+\eta_y) 
\end{aligned}
\end{equation*}
where we used a trivial identity 
\begin{equation*}
(\beta-\eta_x)^2 
= (\beta-\lambda)^2 -(\eta_x-\eta_y)\beta
+ \eta_x^2-\lambda^2 
\end{equation*}
with $\lambda=(\eta_x+\eta_y)/2$ and used the fact that the mean (resp. variance) of the hypergeometric distribution that is involved with the second line is given by $\lambda$ (resp. $\lambda(\kappa-\lambda)/(2\kappa-1)$).  
Hence, \eqref{eq:adjoint_probability_second_moment_quadratic} holds and degree-two polynomials are preserved as well. 
Here, let us comment that this model was originally introduced in \cite[Section 5]{carinci2013duality} as the thermalized version of the partial exclusion process, see below.

\subsubsection{Interacting particles} 
A  well known model that admits a binomial distribution as reversible measure is the so-called partial exclusion process (PEP) which has  the following Markov generator that acts on functions $f:\mathscr X_N\to\mathbb R$ as:
\begin{equation*}
L^{\mathrm{PEP}}f(\eta)
=\frac{1}{2} \sum_{x,y\in\mathbb T_N,\, |x-y|=1} 
\eta_x(\kappa-\eta_{y})
\big[ f(\eta+\delta_{y}-\delta_x) - f(\eta)\big]. 
\end{equation*}
Then, it is not hard to show that the product binomial distribution $\nu_\rho^{\mathrm{Binomial}}$ is an invariant measure of PEP, and by a direct computation, {it is easy to check} that the action of the generator preserves the degree up to two. 
Here, we note that the parameter $\kappa \in \mathbb{N}$ is interpreted as the maximum number of particles at each site.

\subsection{Negative-binomial distribution}
Let us take $S=\mathbb N_0$ for this case. 
Let $\mathfrak s>0$  and  $\nu_\rho^{\mathrm{NB}}$ be the product measure whose common marginal is given by the negative-binomial distribution with mean $\rho$ given by: 
\begin{equation*}
\nu_{\rho}^{\mathrm{NB}}(\eta_x=k) 
=p^{\mathrm{NB}}(k)
=
\frac{\Gamma(2\mathfrak s +k)}{k!\Gamma(2\mathfrak s)}   
\Big( \frac{2\mathfrak s}{2\mathfrak s+\rho}\Big)^{2\mathfrak s}
\Big( \frac{\rho}{2\mathfrak s+\rho}\Big)^{k} 
\end{equation*}
for each $k\in \mathbb N_0$. 
In this parametrization, we have $V^{\mathrm{NB}}(\rho)=\rho^2/(2\mathfrak s) + \rho$. 

\subsubsection{Redistribution} 
Now, let us describe the process with redistribution-type interaction associated with the negative binomial distribution. 
Then, the generator \eqref{eq:generator_redistribution} is given on $f:\mathscr X_N\to\mathbb R$ as 
\begin{equation*}
\nabla_{x,y}^{\mathrm{NB}} f(\eta)=
\sum_{\alpha=-\eta_y}^{\eta_x} 
c^{\mathrm{NB}}_\alpha(\eta_x,\eta_y) 
[f(\eta-\alpha \delta_x+\alpha\delta_y) - f(\eta)] .
\end{equation*}
Now, note that 
\begin{equation*}
\begin{aligned}
Z^{\mathrm{NB}}(\eta_x+\eta_y)
&= \sum_{\alpha=-\eta_y}^{\eta_x} p^{\mathrm{NB}}(\eta_x-\alpha)p^{\mathrm{NB}}(\eta_y+\alpha)
= \sum_{\beta=0}^{\eta_x+\eta_y} p^{\mathrm{NB}}(\beta)p^{\mathrm{NB}}(\eta_x+\eta_y-\beta)\\
&=\Big( \frac{2\mathfrak s}{2\mathfrak s+\rho}\Big)^{4\mathfrak s}
\Big( \frac{\rho}{2\mathfrak s+\rho}\Big)^{\eta_x+\eta_y} 
\sum_{\beta=0}^{\eta_x+\eta_y} \frac{\Gamma(2\mathfrak s+\beta)}{\beta!\Gamma(2\mathfrak s)} \frac{\Gamma(2\mathfrak s+\eta_x+\eta_y-\beta)}{(\eta_x+\eta_y-\beta)!\Gamma(2\mathfrak s)} \\
&=\Big( \frac{2\mathfrak s}{2\mathfrak s+\rho}\Big)^{4\mathfrak s}
\Big( \frac{\rho}{2\mathfrak s+\rho}\Big)^{\eta_x+\eta_y} 
\binom{\eta_x+\eta_y+4\mathfrak s-1}{\eta_x+\eta_y}
\end{aligned}
\end{equation*} and then  the non-negative rate takes the form 
\begin{equation*}
c^{\mathrm{NB}}_\alpha(\eta_x,\eta_y) 
= \frac{ B(2\mathfrak s+\eta_x-\alpha,2\mathfrak s+\eta_y+\alpha) }{B(2\mathfrak s,2\mathfrak s)}
\frac{(\eta_x+\eta_y)!}{(\eta_x-\alpha)!(\eta_y+\alpha)!} \;.
\end{equation*}

Therefore, the rate $c_\alpha^{\mathrm{NB}}$ is normalized to satisfy \eqref{eq:redistribution_rate_normalization}, so that it preserves degree-one terms by \eqref{eq:redistribution_degree-one_preservation}. 
For degree-two terms, note that 
\begin{equation*}
\begin{aligned}
M_2^{\mathrm{NB}}(\eta_x,\eta_y)
&= \sum_{\alpha=-\eta_y}^{\eta_x} \alpha^2 
c_\alpha^{\mathrm{NB}}(\eta_x,\eta_y) \\
&= \sum_{\beta=0}^{\eta_x+\eta_y} 
(\eta_x-\beta)^2 
\frac{ B(2\mathfrak s+\beta, 2\mathfrak s+\eta_x+\eta_y-\beta) }{B(2\mathfrak s,2\mathfrak s)}
\frac{(\eta_x+\eta_y)!}{\beta!(\eta_x+\eta_y-\beta)!}\\
&= \frac{2\mathfrak s+1}{2(4\mathfrak s+1)}(\eta_x^2 + \eta_y^2)
- \frac{2\mathfrak s}{4\mathfrak s+1} \eta_x\eta_y
+ \frac{\mathfrak s}{4\mathfrak s+1}(\eta_x+\eta_y).
\end{aligned}
\end{equation*}
Here we used the fact that the distribution of $q_\alpha d\alpha$ can be written after the change of variable as a Beta-Binomial distribution $\mathrm{BB}(n,a,b)$ with $n= \eta_x+\eta_y$ and $a=b= 2\mathfrak{s}$. 
Note that mean of this distribution is given by 
\begin{equation*}
    n \frac{a}{a+b} = 2(\eta_x + \eta_y)
\end{equation*}
and the second moment is given by 
\begin{equation*}
\frac{n a}{(a+b)^{2}}
\left(
    b\,\frac{a+b+n}{a+b+1}
    + n a
\right) = (\eta_x + \eta_y)^2 \dfrac{2\mathfrak{s} +1}{2(4\mathfrak{s} +1)} + (\eta_x + \eta_y) \dfrac{\mathfrak{s}}{4\mathfrak{s} +1}. 
\end{equation*} 
Hence, the assertion \eqref{eq:adjoint_probability_second_moment_quadratic} is satisfied and thus the generator preserves degree-two terms as well.

\subsubsection{Interacting particles} 
Here, as an interacting particle model, we consider the symmetric inclusion process~(SIP), which was introduced in~\cite{giardina2007duality} as a discrete dual of the Brownian momentum process. 
The generator of the process is given on functions $f:\mathscr X_N\to\mathbb R$ by 
\begin{equation*}
L^{\mathrm{SIP}}f(\eta)
=\frac{1}{2} 
\sum_{x,y\in\mathbb T_N,|x-y|=1}
\eta_x (2\mathfrak s+\eta_{y})\big[f(\eta-\delta_x+\delta_y) - f(\eta)\big] 
\end{equation*}
 where $\delta_x$ denotes the configuration such that there is one particle at site $x$, whereas no particle at others. 
Then, the process generated by $L^{\mathrm{SIP}}$ is reversible with respect to the product measure $\nu^{\mathrm{NB}}_\rho$ with spin parameter $\mathfrak s>0$.   
With some computations, it is straightforward to check that all the assumptions that we imposed in general for the degree-preserving process hold true for SIP.

\subsubsection{Harmonic model}
\label{harmonicmodel}
Lastly, we give another example of a family of particle systems of zero-range type which was originally introduced via a mapping of the non-compact Heisenberg XXX chain in one dimension with open boundary,  see \cite{frassek2019non}.
Its invariant measure is given by a product of negative-binomial distributions,  see also \cite[Section 2.2.2]{franceschini2026hydrodynamic}. 
The generator of the harmonic models labeled by the spin parameter $\mathfrak s>0$ is given by \eqref{eq:generator_redistribution} with $\nabla_{x,y}=\nabla^{\mathrm{Harm}}_{x,y}$ where $\nabla^{\mathrm{Harm}}_{x,y}$ is defined for each $f:\mathscr X_N\to \mathbb R$ by  
\begin{equation*}
\nabla^{\mathrm{Harm}}_{x,y} f(\eta) 
= \sum_{\alpha=1}^{\eta_x} \dfrac{\Gamma(\eta_x +1) \Gamma(\eta_x - \alpha + 2\mathfrak s)}{\alpha \Gamma(\eta_x - \alpha +1 ) \Gamma(\eta_x + 2\mathfrak s)}  
\big[ f(\eta -\alpha\delta_x + \alpha\delta_{y}) - f(\eta)\big] . 
\end{equation*}
In other words, $\alpha$ particles jump from site $x$ to site $y$ with rate 
\begin{equation}
c^{\mathrm{Harm}}_\alpha (\eta_x,\eta_y)
= \dfrac{\Gamma(\eta_x+1) \Gamma(\eta_x - \alpha + 2\mathfrak s)}{ \alpha \Gamma(\eta_x-\alpha+1) \Gamma(\eta_x + 2\mathfrak s)} \mathbf{1}_{\lbrace \alpha \leq \eta_x \rbrace}. 
\end{equation}
Similarly to the other models, the Harmonic model exhibits reversibility, and thus stationarity, with respect to a product measure $\nu_\rho^{\mathrm{NB}}$. Moreover, a straightforward computation shows that the harmonic model preserves the degree up to two, see \cite{franceschini2026hydrodynamic} for more details.

\subsection{Generalized hyperbolic secant (GHS) distribution} 
\label{GHS_Section}
Here, we recall from \cite[Chapter 3]{fischer2013generalized} the definition and basic properties of generalized hyperbolic secant~(GHS) distribution, or the Meixner distribution in some literature.  
The density of the GHS distribution with parameters $r>0$ and $\theta\in\mathbb R$ is given as follows: 
\begin{equation*}
p_{r,\theta}^{\mathrm{GHS}}(\eta) 
= e^{\theta \eta + r \log \cos \theta} 
\frac{2^{r-2}}{\pi \Gamma(r)} 
\Big| \Gamma\Big(\frac{r}{2}+ i\frac{\eta}{2}\Big)\Big|^2 
\end{equation*}
for $\eta\in\mathbb R$, where $i=\sqrt{-1}$ denotes the imaginary unit.  
Here, notice that the density of the GHS distribution is expressed with an infinite product since 
\begin{equation*}
\bigg| \frac{\Gamma\big(\frac{r}{2}+ i\frac{\eta}{2}\big)}{\Gamma(\frac{r}{2}) } \bigg|^2 
= \prod_{j=0}^\infty \Big( 1 +\frac{\eta^2}{(r+2j)^2} \Big)^{-1} ,
\end{equation*}
see \cite[6.1.25, p.256]{abramowitz1965handbook}.  
In particular, using the relation $|\Gamma(1/2+i\eta)|^2=\pi/\cosh(\pi \eta)$, it turns out that when $r=1$ and $\theta=0$, it reduces to the hyperbolic secant distribution:
\begin{equation*}
 p^{\mathrm{GHS}}_{1,0}(\eta)
={\frac {1}{2}}\operatorname {sech} \left( {\frac {\pi \eta}{2}} \right)  
= (e^{\pi \eta/2} + e^{-\pi \eta/2})^{-1}. 
\end{equation*}
The density $p^{\mathrm{GHS}}_{r,0}$ with generic ${r > 0}$ can be interpret as the ``$r$-th'' convolution of $p_{1,0}^{\mathrm{GHS}}$, and particularly we have an identity $p^{\mathrm{GHS}}_{r,0}*p^{\mathrm{GHS}}_{r,0}(\eta)=p^{\mathrm{GHS}}_{2r,0}(\eta)$ as it is clear from the form of the moment generating function below, and the parameter $\theta$ is introduced via the Esscher transformation, see \cite[Section 5]{morris1982natural} for more details.   
It is known that the  mean (resp. variance) of the GHS distribution with the above parameterization is given by $r\tan \theta$ (resp. $r\tan^2\theta+r$). 
In order to have an association with the density, let us take $\theta=\mathrm{arctan} (\rho/r)$ an let $\nu^{\mathrm{GHS}}_\rho$ be the product measure whose common marginal is given as GHS with this reparameterization. 
Then we know that $E_{\nu_\rho^{\mathrm{GHS}}}[\eta_0]=\rho$ and the variance function in this case is given by $V^{\mathrm{GHS}}(\rho)=\rho^2/r + r$. 
Moreover, note that the moment generating function of GHS is given by 
\begin{equation}
\label{eq:GHS_MGF_definition}
\mathscr M^{\mathrm{GHS}}_{r,\theta}(t)
\coloneqq E_{\nu^{\mathrm{GHS}}_\rho} \big[e^{t\eta_0} \big] 
= \left( \frac{\cos \theta}{\cos(\theta+t)} \right)^r 
\end{equation}
with $\theta=\mathrm{arctan}(\rho/r)$, see \cite[Section 3, Eq. (3.8)]{fischer2013generalized}.

\subsubsection{Redistribution}
Now, let us describe the process associated with the GHS distribution. To the best of our knowledge, there is no references in the literature regarding this model.
The infinitesimal generator is given by \eqref{eq:generator_redistribution} with the operator $\nabla ^{x,y}$ acting on functions $f:\mathscr X_N\to\mathbb R$ as
\begin{equation*}
\nabla_{x,y}f(\eta) 
= \int_{-\infty}^\infty 
c_\alpha^{\mathrm{GHS}}(\eta_x,\eta_y) 
[f(\eta-\alpha \delta_x + \alpha\delta_y) - f(\eta)]d\alpha.
\end{equation*}
We also note that the normalizing factor defined in \eqref{eq:partition_function_redistribution_def} reads as
\begin{equation*}
Z^{\mathrm{GHS}}_{r,\theta}(\eta_x+\eta_y)
= \big( p^{\mathrm{GHS}}_{r,\theta} *
p^{\mathrm{GHS}}_{r,\theta}\big) (\eta_x+\eta_y)
=p^{\mathrm{GHS}}_{2r,\theta} (\eta_x+\eta_y)
\end{equation*}
where the last identity follows from the form of the moment generating function \eqref{eq:GHS_MGF_definition}. And the  non-negative rate $c_\alpha^{\mathrm{GHS}}(\eta_x,\eta_y)$ is defined by 
\begin{equation*}
c^{\mathrm{GHS}}_\alpha(\eta_x,\eta_y)
= \frac{1}{Z^{\mathrm{GHS}}_{r,\theta}  (\eta_x+\eta_y)} 
p^{\mathrm{GHS}}_{r,\theta}(\eta_x-\alpha)
p^{\mathrm{GHS}}_{r,\theta}(\eta_y+\alpha).
\end{equation*}
Recall that from \eqref{eq:redistribution_rate_normalization} and \eqref{eq:redistribution_degree-one_preservation}, we already know that degree-one terms are preserved under the action of the generator. 
For degree-two terms, let us check that the second moment $M_2^{\mathrm{GHS}}$ is a second-order polynomial.  
To that end, note that 
\begin{equation*}
\begin{aligned}
M_2^{\mathrm{GHS}}(\eta_x,\eta_y)
&= \frac{1}{Z^{\mathrm{GHS}}_{r,\theta}(\eta_x+\eta_y) }  
\int_{-\infty}^\infty \alpha^2
p^{\mathrm{GHS}}_{r,\theta} (\eta_x-\alpha)
p^{\mathrm{GHS}}_{r,\theta} (\eta_y+\alpha)d\alpha \\
&=\eta_x\eta_y 
+ (\eta_x-\eta_y)M_1^{\mathrm{GHS}}(\eta_x,\eta_y) 
- \frac{F^{\mathrm{GHS}}_{r,\theta}(\eta_x+\eta_y)}{Z^{\mathrm{GHS}}_{r,\theta}(\eta_x+\eta_y)} 
\end{aligned}
\end{equation*}
where 
\begin{equation*}
F^{\mathrm{GHS}}_{r,\theta}(s)
= \int_{-\infty}^\infty 
\beta(\eta_x+\eta_y-\beta)
p^{\mathrm{GHS}}_{r,\theta} (\beta)
p^{\mathrm{GHS}}_{r,\theta} (s-\beta) d\beta.
\end{equation*}
Above, we used a trivial identity 
\begin{equation*}
\alpha^2 
= \eta_x\eta_y 
+(\eta_x-\eta_y)\alpha
-(\eta_x-\alpha)(\eta_y+\alpha) 
\end{equation*}
and then applied a change of variable $\beta=\eta_x-\alpha$. 
Here recall that we already know that the first moment is degree-one: $M^{\mathrm{GHS}}_1(\eta_x,\eta_y)=(\eta_x-\eta_y)/2$.   
To compute $F^{\mathrm{GHS}}_{r,\theta}(\eta_x,\eta_y)$, let us apply the Laplace transformation, which is denoted by $\mathcal L$. 
Note that 
\begin{equation*}
\begin{aligned}
\mathcal LF_{r,\theta}(t)
&\coloneqq \int_{-\infty}^\infty
\int_{-\infty}^\infty
\beta(s-\beta)
p^{\mathrm{GHS}}_{r,\theta} (\beta)
p^{\mathrm{GHS}}_{r,\theta} (s-\beta) e^{ts} d\beta ds \\
&= \bigg( \int_{-\infty}^\infty
\beta p^{\mathrm{GHS}}_{r,\theta}(\beta)e^{t\beta} d\beta \bigg)^2 
= \Big( \frac{d}{dt}\mathscr M^{\mathrm{GHS}}_{t,\theta}(t) \Big)^2 
= r^2 \Big(\frac{\cos\theta}{\cos(\theta+t)} \Big)^{2r} \tan^2(\theta+t) \\
&=r^2 \Big( 1+ \frac{\rho^2}{r^2}\Big) \mathscr M^{\mathrm{GHS}}_{2r+2,\theta}(t)
-r^2 \mathscr M^{\mathrm{GHS}}_{2r,\theta}(t).
\end{aligned}
\end{equation*}
Therefore, from the Laplace inversion, this immediately yields 
\begin{equation*}
F^{\mathrm{GHS}}_{r,\theta}(s)
= (r^2+\rho^2)p^{\mathrm{GHS}}_{2r+2,\theta}(s)
-r^2 p^{\mathrm{GHS}}_{2r,\theta}(s).
\end{equation*}
However, here we note that  
\begin{equation*}
\begin{aligned}
\frac{p^{\mathrm{GHS}}_{2r+2,\theta}(s)}{p^{\mathrm{GHS}}_{2r,\theta}(s)}
&=\frac{e^{(2r+2)\log \cos\theta}}{e^{2r \log \cos\theta}} \frac{4\Gamma(2r)}{\Gamma(2r+2)}
\bigg| \frac{\Gamma(\tfrac{2r+2}{2}+i\tfrac{s}{2})}{ \Gamma(\tfrac{2r}{2}+i\tfrac{s}{2})} \bigg|^2 \\
&= \cos^2\theta \frac{2}{r(2r+1)} \Big( r^2 + \frac{s^2}{4}\Big) 
= \frac{r(4r^2 + s^2)}{2(2r+1)(r^2 +\rho^2)}
\end{aligned}
\end{equation*}
where we used an elementary identity of the Gamma function $\Gamma(z+1)=z\Gamma(z)$. 
Since the previous display is a quadratic function of $s$ and so is $F^{\mathrm{GHS}}_{r,\theta}(s)/Z^{\mathrm{GHS}}_{r,\theta}(s)$, we conclude that the action of the generator preserves degree-two terms.

\begin{Remark}
\label{rem:GHS_not_exists}
It is not possible to identify a diffusion process whose reversible product measure is the GHS distribution and for which conditions \textnormal{(A6.1)} and \textnormal{(A6.2)} on degree-preserving holds. 
This is because such conditions require a restrictive form of coefficients of each derivative operator in the generator of the diffusion process, and thus the invariance of the GHS distribution for each marginal is violated.   
\end{Remark}

\section{Random walk representation and second-moment bounds} \label{sec4}
Here let us give some key estimates, which will be used ahead in our proof of the hydrodynamic limit. 
In what follows, we always assume  both Assumptions  \ref{assmp:generator_element_basic} and \ref{assumption:second_order_polymonial}.

\subsection{Dynkin's martingales} 
\label{subsec4.1} 
The building blocks for our proof of the hydrodynamic limit (\cref{thm:hydrodynamic_general_main})  are the  Dynkin's martingales, which, in this context, read as follows. 
Recall from \eqref{eq:empirical_measure_definition} the definition of the empirical measure. 
From Dynkin's martingale formula~(see \cite[Lemma A1.5.1]{kipnis1999scaling} or \cite[Proposition VII.1.6]{revuz2013continuous}), for any $G \in C^2(\mathbb T)$, the processes  
\begin{equation}\label{eq:dynkin_1}
M^N_t(G)
= \pi^N_t(G)
- \pi^N_0(G)
-\int_0^t N^2L\pi^N_s(G)ds 
\end{equation}
and $M^N_t(G)^2-\langle M^N(G)\rangle_t$ are  mean-zero martingales with respect to the natural filtration, where 
\begin{equation} \label{eq:dynkin_2}
\langle M^N(G)\rangle_t 
= \int_0^t \Upsilon^N_s(G)ds 
\end{equation}
and the carr\'e du champs operator is defined by 
\begin{equation*}
\Upsilon^N_t(G)= N^2 L \langle\pi_t^N,G \rangle^2 - 2N^2 \langle\pi_t^N,G \rangle L \langle\pi_t^N,G \rangle .
\end{equation*}

Let us start from the observation that the system is of gradient type, with constant diffusion coefficient.  
For $x\in\mathbb T_N$, let  $W_{x,x+1}=L_{x,x+1}\eta_x$ be the instantaneous current between sites $x$ and $x+1$. As we shall see below, all our  models are of gradient type, that is, the current can be  written as a gradient of some local function.

\begin{lemma}\label{lem:lap}
For all $x\in\mathbb T_N$, we have $W_{x,x+1}=D(\eta_{x+1}-\eta_x)$, so that the model is of gradient type. Moreover, we have that 
$
L \eta_x =D \Delta \eta_x
$
where $\Delta$ denotes the discrete Laplacian {though now acting on functions of the state-space.}
\end{lemma}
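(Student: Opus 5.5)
The plan is to reduce everything to \cref{lem:gradient_condition} and the translation structure $L_{x,x+1}=\tau_x L_{0,1}\tau_{-x}$ from \eqref{eq:generator_definition}. First, the current. By definition $W_{x,x+1}=L_{x,x+1}\eta_x=\tau_x L_{0,1}\tau_{-x}\eta_x$. With the convention $\tau_x f(\eta)=f(\tau_x\eta)$ and $\tau_x\eta_z=\eta_{z+x}$, we have $\tau_{-x}\eta_x=\eta_0$. Applying \cref{lem:gradient_condition} gives $L_{0,1}\eta_0=D(\eta_1-\eta_0)$. Shifting back by $\tau_x$ then yields $W_{x,x+1}=D(\eta_{x+1}-\eta_x)$, which is exactly the gradient form.

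Next, the generator acting on $\eta_x$. I will split $L\eta_x=\sum_{y}L_{y,y+1}\eta_x$ and check that only the bonds $y=x-1$ and $y=x$ contribute. For $y\notin\{x-1,x\}$, the function $\eta_x$ depends only on coordinates outside $\{y,y+1\}$. Hence, by (A2) applied with $G\equiv1$ (after shifting) together with (A1), we get $L_{y,y+1}\eta_x=\eta_x L_{y,y+1}1=0$.

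The bond $y=x$ contributes $W_{x,x+1}=D(\eta_{x+1}-\eta_x)$. For the bond $y=x-1$, I will use $L_{x-1,x}(\eta_{x-1}+\eta_x)=0$, which is the shifted version of the conservation $L_{0,1}(\eta_0+\eta_1)=0$ noted after Assumption~\ref{assumption:second_order_polymonial}. This gives
\[
L_{x-1,x}\eta_x=-L_{x-1,x}\eta_{x-1}=-W_{x-1,x}=-D(\eta_x-\eta_{x-1}).
\]
Summing the two contributions, $L\eta_x=D(\eta_{x+1}-2\eta_x+\eta_{x-1})=D\Delta\eta_x$.

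The only step requiring care is that (A2) is formally stated for functions in $\mathcal D(L)$, whereas $\eta_x$ is a polynomial. Assumption~\ref{assumption:second_order_polymonial} extends the action of $L_{0,1}$ to $\mathcal P_2$, and I would invoke (A2) in that extended sense, as the paper already does for the conservation law. Beyond this point, the argument is bookkeeping of shifts and needs no further estimate.
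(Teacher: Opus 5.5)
Your proposal is correct and matches the paper's proof: the current comes from \cref{lem:gradient_condition} by translation, and $L\eta_x$ reduces to the two bond terms $L_{x,x+1}\eta_x+L_{x-1,x}\eta_x$. Your explicit check that all other bonds vanish via \textnormal{(A1)}--\textnormal{(A2)}, and your derivation of $L_{x-1,x}\eta_x$ from the conservation law, only spell out steps the paper leaves implicit.
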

\begin{proof}
The computation of the instantaneous current follows directly from \cref{lem:gradient_condition}. 
As a consequence, we can write
\begin{equation}
\label{diff}
\begin{aligned}
L \eta_x 
&= L_{x,x+1} \eta_x + L_{x-1,x} \eta_x \\
&= D(\eta_{x+1} - \eta_x) + D(\eta_{x-1} - \eta_x) =  D
(\eta_{x+1} - 2\eta_x + \eta_{x-1}) = D \Delta \eta_x \;.
\end{aligned}
\end{equation}
\end{proof}

Next, we note that we have the following explicit representation of the carr\'e du champs operator.  

\begin{lemma}
\label{lem:qua_var}
We have that 
\begin{equation*}
\Upsilon^N_t(G) 
= \dfrac{1}{N^2}\sum_{x \in \mathbb{T}_N} \left( \nabla^+_N  G \left( \tfrac{x}{N} \right) \right)^2 \left[ D \left( \eta_x (t)- \eta_{x+1}(t) \right)^2 
- L_{x,x+1}(\eta_x(t) \eta_{x+1}(t)) \right] .
\end{equation*}
Above and in what follows, $\nabla^\pm_N G(x/N)=N(G((x\pm 1)/N) -G(x/N))$. 
\end{lemma}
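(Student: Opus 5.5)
We compute the carré du champ directly from its definition, using the decomposition $L=\sum_{x}L_{x,x+1}$ and the fact that $\langle\pi^N_t,G\rangle$ is a linear function of $\eta$. Write $F=\langle\pi^N,G\rangle=\frac1N\sum_z G(z/N)\eta_z$. Then
\begin{equation*}
\Upsilon^N(G)=N^2\sum_{x\in\mathbb T_N}\big(L_{x,x+1}F^2-2F\,L_{x,x+1}F\big),
\end{equation*}
so it suffices to compute, for each bond $(x,x+1)$, the local carré du champ $\Gamma_{x,x+1}(F)\coloneqq L_{x,x+1}F^2-2FL_{x,x+1}F$.

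\emph{Reducing to the two sites of the bond.} Split $F=A_x+B_x$ with
\begin{equation*}
A_x=\frac1N\big(G(\tfrac xN)\eta_x+G(\tfrac{x+1}N)\eta_{x+1}\big),
\end{equation*}
so that $B_x$ depends only on $\eta_y$ for $y\notin\{x,x+1\}$. By \textnormal{(A2)} (shifted by $\tau_x$), $L_{x,x+1}(B_xH)=B_xL_{x,x+1}H$ for any $H$ in the domain, and by \textnormal{(A1)} $L_{x,x+1}B_x=B_xL_{x,x+1}1=0$. Expanding $F^2=A_x^2+2A_xB_x+B_x^2$ shows that all terms containing $B_x$ cancel, leaving $\Gamma_{x,x+1}(F)=\Gamma_{x,x+1}(A_x)$.

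\emph{Using the conservation law on the bond.} Next, rewrite
\begin{equation*}
A_x=\frac{G(x/N)}{N}(\eta_x+\eta_{x+1})+\frac{G((x+1)/N)-G(x/N)}{N}\eta_{x+1}.
\end{equation*}
The first summand is a function of $\eta_x+\eta_{x+1}$, so the same (A2)/(A1) argument removes it. Since $\frac{G((x+1)/N)-G(x/N)}{N}=\frac{1}{N^2}\nabla^+_NG(x/N)$, we get
\begin{equation*}
\Gamma_{x,x+1}(F)=N^{-4}\big(\nabla_N^+G(\tfrac xN)\big)^2\,\Gamma_{x,x+1}(\eta_{x+1}).
\end{equation*}
Multiplying by $N^2$ produces the prefactor $N^{-2}(\nabla_N^+G)^2$ of the statement.

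\emph{Computing the bond carré du champ.} We must show $\Gamma_{x,x+1}(\eta_{x+1})=D(\eta_x-\eta_{x+1})^2-L_{x,x+1}(\eta_x\eta_{x+1})$. Use
\begin{equation*}
\eta_{x+1}^2=\eta_{x+1}(\eta_x+\eta_{x+1})-\eta_x\eta_{x+1}.
\end{equation*}
By (A2), $L_{x,x+1}\big((\eta_x+\eta_{x+1})\eta_{x+1}\big)=(\eta_x+\eta_{x+1})L_{x,x+1}\eta_{x+1}$. By \cref{lem:gradient_condition} together with (A3), $L_{x,x+1}\eta_{x+1}=D(\eta_x-\eta_{x+1})$. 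Hence
\begin{equation*}
\Gamma_{x,x+1}(\eta_{x+1})=(\eta_x+\eta_{x+1})D(\eta_x-\eta_{x+1})-L_{x,x+1}(\eta_x\eta_{x+1})-2\eta_{x+1}D(\eta_x-\eta_{x+1}),
\end{equation*}
and the first and last terms combine to $D(\eta_x-\eta_{x+1})^2$, as required.

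The only delicate point is justifying that (A2) applies to these unbounded polynomial functions. Assumption \ref{assumption:second_order_polymonial} states that $L_{0,1}$ acts on $\mathcal P_2$, and the martingale properties in (A6.1)–(A6.2) make Dynkin's formula \eqref{eq:polynomial_martingale_qv} valid for $f\in\mathcal P_1$. We interpret (A2) on $\mathcal P_2$ in this sense; where a direct argument is needed, we expect it to follow from the bilinear identities already encoded in (A6.2).
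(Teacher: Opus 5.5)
Your proof is correct, but it is organized differently from the paper's. The paper expands $\Upsilon^N_t(G)$ as a double sum $\sum_{x,y}G(\tfrac xN)G(\tfrac yN)[L(\eta_x\eta_y)-2\eta_xL\eta_y]$ and keeps only the diagonal and nearest-neighbour pairs; this tacitly uses that pairs at distance $\ge 2$ contribute nothing. It then computes $L\eta_x^2-2\eta_xL\eta_x$ and $L(\eta_x\eta_{x+1})-\eta_xL\eta_{x+1}-\eta_{x+1}L\eta_x$ separately, expressing both through the bond quantity $D(\eta_x-\eta_{x+1})^2-L_{x,x+1}(\eta_x\eta_{x+1})$. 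Finally it recombines $G(\tfrac xN)^2+G(\tfrac{x+1}N)^2-2G(\tfrac xN)G(\tfrac{x+1}N)$ into a square.

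You instead localize the carr\'e du champ bond by bond. On each bond you use (A1)--(A2) to discard everything that is a function of the other sites and of $\eta_x+\eta_{x+1}$, so only the gradient part $N^{-2}\nabla^+_NG(\tfrac xN)\,\eta_{x+1}$ survives, and a single one-bond computation finishes.

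The ingredients are the same: (A1), (A2), (A3) and Lemma~\ref{lem:gradient_condition}. Your version makes it transparent why only $(\nabla_N^+G)^2$ appears, namely the conservation law on each bond. It also avoids both the tacit cancellation of non-adjacent pairs and the final recombination. The paper's route, in exchange, produces along the way the identities for $L_{x,x+1}\eta_x^2$ that it reuses when deriving the evolution of $\phi(t,0)$.

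One small correction to your last paragraph. The product rule (A2) on $\mathcal P_2$ cannot be deduced from (A6.2), which only prescribes the value of $L_{0,1}(\eta_0\eta_1)$. The paper simply uses (A2) on polynomials as part of its standing assumptions, and you may do the same.
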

\begin{proof}
We can explicitly compute
\begin{align} 
\label{integrand}
\Upsilon^N_s(G) 
& = \sum_{x,y \in \mathbb{T}_N} 
G\left( \tfrac{x}{N}\right)
G\left( \tfrac{y}{N}\right) \left[L \eta_x(s) \eta_y(s) - 2 \eta_x(s) L \eta_y(s) \right] \\ 
& =  \sum_{x \in \mathbb{T}_N} 
G\left( \tfrac{x}{N}\right)
\left[L \eta_x(s)^2 - 2 \eta_x(s) L \eta_x(s) \right] \nonumber \\ 
&\quad+ \sum_{x \in \mathbb{T}_N} 
2 G\left( \tfrac{x}{N}\right)
G\left( \tfrac{x+1}{N}\right)
\left[L (\eta_x(s)\eta_{x+1}(s)) -  \eta_x(s) L \eta_{x+1}(s) -  \eta_{x+1}(s) L \eta_{x}(s)  \right] \nonumber
\end{align}
and we are left to compute the second-order terms.
For the first one we have 
\begin{align*}
L \eta_x^2 = L_{x,x+1}\eta_x^2 + L_{x-1,x}\eta_x^2
\end{align*}
where 
\begin{align*}
L_{x,x+1} \eta_x^2 &= L_{x,x+1} \left( \eta_x^2 + \eta_x \eta_{x+1} \right) - L_{x,x+1} \left( \eta_x \eta_{x+1} \right) \\ & = \left( \eta_x + \eta_{x+1} \right) L_{x,x+1} \eta_x - L_{x,x+1}\left( \eta_x \eta_{x+1} \right) \\ & = 
D \left( \eta_{x+1}^2 - \eta_x^2 \right) -  L_{x,x+1}\left( \eta_x \eta_{x+1} \right)
\end{align*}
and similarly
\begin{align*}
L_{x-1,x} \eta_x^2 = D \left( \eta_{x-1}^2 - \eta_x^2 \right) -  L_{x-1,x}\left( \eta_{x-1} \eta_{x} \right) .
\end{align*}
Above, we used the fact that $L_{x,x+1} = L_{x+1,x}$ and 
\begin{equation*}
L_{x,x+1} \big( F(\eta)G(\eta_x,\eta_{x+1})\big) 
= F(\eta)L_{x,x+1}G(\eta_x,\eta_{x+1}) 
\end{equation*}
for any $F:\mathscr X_N\to\mathbb R$ which is a function of $\eta_y, y \neq x, x+1$ and $\eta_x+\eta_{x+1}$.
This, together with \eqref{diff}, leads to 
\begin{align*}
L \eta_x^2 -2\eta_x L \eta_x 
&= D(\eta_{x+1}^2 - \eta_x^2) - L_{x,x+1}(\eta_x \eta_{x+1}) -2D\eta_x (\eta_{x+1} - \eta_x) \\
&\quad + D(\eta_{x-1}^2 - \eta_x^2) - L_{x-1,x}\eta_x \eta_{x-1} -2D\eta_x (\eta_{x-1} - \eta_x) \\
&=  D (\eta_{x+1} - \eta_x)^2 - L_{x,x+1}(\eta_x \eta_{x+1}) + D(\eta_{x-1} - \eta_x)^2 - L_{x-1,x}\eta_x \eta_{x-1}.
\end{align*}
For the last line of \eqref{integrand} we have that
\begin{align*}
L (\eta_x\eta_{x+1}) -  \eta_x L \eta_{x+1} -  \eta_{x+1} L \eta_{x} 
&= L_{x,x+1} (\eta_x \eta_{x+1}) - \eta_x L_{x,x+1} \eta_{x+1} - \eta_{x+1}L_{x,x+1} \eta_x \\
& = L_{x,x+1} (\eta_x \eta_{x+1}) - \eta_x D (\eta_x - \eta_{x+1}) - \eta_{x+1} D (\eta_{x+1} - \eta_{x}) \\
&= L_{x,x+1} (\eta_x \eta_{x+1}) -D (\eta_x - \eta_{x+1})^2
\end{align*}
where we used the fact that 
\begin{equation*}
\begin{aligned}
L_{x,x+1} \eta_{x+1} 
= L_{x,x+1} ( \eta_x + \eta_{x+1} - \eta_x) 
= -  L_{x,x+1} \eta_x = D (\eta_x - \eta_{x+1}) .
\end{aligned}
\end{equation*}
This allows us to rewrite the integrand part of the quadratic variation as
\begin{align*}
\Upsilon^N_t(G) 
& = \sum_{x \in \mathbb{T}_N} 
\left( G\left( \tfrac{x}{N}\right) - G\left(\tfrac{x+1}{N}\right) \right)^2 
\big[ D( \eta_x - \eta_{x+1})^2 - L_{x,x+1} (\eta_x \eta_{x+1}) \big](t) \\ &
= \dfrac{1}{N^2}\sum_{x \in \mathbb{T}_N} 
\left( \nabla^+_N G \left( \tfrac{x}{N} \right) \right)^2 \big[ D ( \eta_x - \eta_{x+1})^2 
- L_{x,x+1}(\eta_x \eta_{x+1}) \big](t) .
\end{align*}
\end{proof}

\subsection{Main estimates} 
\label{sebsec:main_estimates}
Now we derive a random walk representation for a two-point space correlation function.  
We show that the correlation decays, and consequently, we have a uniform second-order moment bound. 
The next estimate was crucial to show the hydrodynamic limit, see \cite[Appendix A]{franceschini2026hydrodynamic}, however, for the present paper, it is obvious from the second-order polynomial assumption ({Assumption \ref{assumption:second_order_polymonial}}), i.e. \textnormal{(A6.2)}, and that $|\eta_x| \le 1+ \eta_x^2$.  

\begin{lemma}
\label{K-estimate} 
There exists a constant $K>0$ such that for any $\eta$ and $x\in\mathbb T_N$
\begin{equation}
\big| D( \eta_x - \eta_{x+1})^2 
- L_{x,x+1}(\eta_x \eta_{x+1})\big|  
\le K(\eta_x^2+\eta_{x+1}^2 +1). 
\end{equation}
\end{lemma}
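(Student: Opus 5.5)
The argument is purely algebraic, using only the explicit quadratic form of $L_{x,x+1}(\eta_x\eta_{x+1})$ from (A6.2).

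First, translate (A6.2) from the bond $(0,1)$ to $(x,x+1)$. Since $L_{x,x+1}=\tau_x L_{0,1}\tau_{-x}$ and $\tau_{-x}(\eta_x\eta_{x+1})=\eta_0\eta_1$, we get
\begin{equation*}
L_{x,x+1}(\eta_x\eta_{x+1})=\mathsf{a}(\eta_x^2+\eta_{x+1}^2)+\mathsf{b}\,\eta_x\eta_{x+1}+\mathsf{c}(\eta_x+\eta_{x+1})+\mathsf{d}.
\end{equation*}
The same constants appear for every $x$ and every $N$, because $L_{0,1}$ acting on polynomials of $\eta_0,\eta_1$ does not depend on the rest of the configuration.

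Next, expand the whole expression. With $D>0$ from \cref{lem:gradient_condition},
\begin{equation*}
D(\eta_x-\eta_{x+1})^2-L_{x,x+1}(\eta_x\eta_{x+1})=(D-\mathsf{a})(\eta_x^2+\eta_{x+1}^2)-(2D+\mathsf{b})\eta_x\eta_{x+1}-\mathsf{c}(\eta_x+\eta_{x+1})-\mathsf{d}.
\end{equation*}
Bound each term separately:
\begin{itemize}
\item for the cross term, use $|\eta_x\eta_{x+1}|\le \tfrac12(\eta_x^2+\eta_{x+1}^2)$;
\item for the linear terms, use $|\eta_y|\le 1+\eta_y^2$, as noted before the statement;
\item keep the constant term as $|\mathsf{d}|$.
\end{itemize}
Collecting, one can take
\begin{equation*}
K=|D-\mathsf{a}|+\tfrac12|2D+\mathsf{b}|+|\mathsf{c}|+2|\mathsf{c}|+|\mathsf{d}|,
\end{equation*}
or any larger constant. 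This $K$ depends only on $D,\mathsf{a},\mathsf{b},\mathsf{c},\mathsf{d}$, hence is uniform in $\eta$, $x$ and $N$.

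The only point needing care is the translation step: the identity in (A6.2) must hold pointwise for every configuration, not merely in expectation, and it must carry over to each bond with $N$-independent coefficients. This follows directly from the definition $L_{x,x+1}=\tau_xL_{0,1}\tau_{-x}$ together with the polynomial identity assumed in (A6.2). Everything else is elementary.
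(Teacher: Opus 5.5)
Your proof is correct and follows the paper's argument: the paper just calls the bound obvious from the quadratic form of $L_{x,x+1}(\eta_x\eta_{x+1})$ in \textnormal{(A6.2)} together with $|\eta_x|\le 1+\eta_x^2$, and you have carried this out with an explicit $K$. One small correction: the $N$-independence of $D,\mathsf{a},\mathsf{b},\mathsf{c},\mathsf{d}$ is a standing assumption of the setup (they are treated as fixed model constants), not something that follows from locality of $L_{0,1}$.
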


Next, let us show the following preliminary result.  

\begin{lemma}
\label{lem:special_case_condition_bernoulli} 
We have that $v_2 >-1$. 
\end{lemma}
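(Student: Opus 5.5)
The plan is to combine the classification in \cref{thm:classification_main} with a direct algebraic argument that excludes the single boundary case. First I reduce the claim to one excluded case. Under an affine change $\eta_0\mapsto A\eta_0+B$ with $A\neq 0$, the mean becomes $A\rho+B$. The variance becomes $A^2 V(\rho)$, which as a function of the new mean $\tilde\rho$ equals $A^2V((\tilde\rho-B)/A)$. Hence the leading coefficient $v_2$ is invariant under such transformations. By \cref{thm:classification_main}, $v_2$ therefore belongs to $\{0\}\cup\{s:s>0\}\cup\{-1/\kappa:\kappa\in\mathbb N\}$. All of these values are $>-1$ except $v_2=-1/\kappa$ with $\kappa=1$. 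So it suffices to rule out that the marginal is, up to an affine map, a Bernoulli distribution.

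Second, I assume for contradiction that the marginal is an affine image of a Bernoulli law. I would first reduce to the case $S=\{0,1\}$, arguing that the state space is the support $\{B,A+B\}$ of the marginal. On this two-point set every function is affine, so the relation $\eta_0^2=(2B+A)\eta_0-B(A+B)$ holds on $S$, and similarly for $\eta_1$. Consequently $\eta_0^2+\eta_1^2$ is an affine function of $\eta_0+\eta_1$. The function $(\eta_0+\eta_1)^2$ is also a function of $\eta_0+\eta_1$. By (A2), with $G\equiv 1$, together with (A1), $L_{0,1}$ annihilates both of these functions. Writing
\[
\eta_0\eta_1=\tfrac12\big((\eta_0+\eta_1)^2-\eta_0^2-\eta_1^2\big),
\]
I get $L_{0,1}(\eta_0\eta_1)\equiv 0$ on $\mathscr X_N$. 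This contradicts the non-degeneracy requirement $L_{0,1}(\eta_0\eta_1)\not\equiv 0$ in (A6.2).

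As a consistency check, I can also use the relations \eqref{systemvs}. Normalize to $\{0,1\}$, so that $V(\rho)=\rho-\rho^2$, that is $(v_2,v_1,v_0)=(-1,1,0)$. The relations then force $\mathsf b=0$, $\mathsf c=-\mathsf a$ and $\mathsf d=0$. This gives $L_{0,1}(\eta_0\eta_1)=\mathsf a(\eta_0^2-\eta_0+\eta_1^2-\eta_1)$, which indeed vanishes identically on $\{0,1\}^2$. The contradiction is the same.

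The main obstacle is the identification of $S$ with the support of the marginal. If $S$ were strictly larger than the two-point support, the polynomial $L_{0,1}(\eta_0\eta_1)$ need not vanish off the support. I would handle this by noting that the paper's admissible state spaces are exactly the supports of the six families, so in the binomial case with $\kappa=1$ we have $S=\{0,1\}$ up to the affine map. Failing that, I would interpret the non-degeneracy in (A6.2) as non-vanishing on the support of $\overline\nu_\lambda$, where the argument above applies verbatim.
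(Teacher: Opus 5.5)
Your proposal is correct and follows the paper's proof: by the classification only the Bernoulli case ($\kappa=1$) has $v_2\le -1$, and there $\eta_x^2$ is affine in $\eta_x$, so (A1)--(A2) force $L_{0,1}(\eta_0\eta_1)=\tfrac12 L_{0,1}(\eta_0+\eta_1)^2-\tfrac12 L_{0,1}(\eta_0^2+\eta_1^2)\equiv 0$, contradicting (A6.2). Your extra care (affine invariance of $v_2$, a general two-point support, contradicting the non-degeneracy in (A6.2) directly rather than via $\mathsf a=0$, and requiring that $S$ coincide with the support) makes explicit what the paper uses tacitly when it writes $\eta_0^2=\eta_0$; the last identification is genuinely needed, since nearest-neighbour jump dynamics on $S=\mathbb Z$ with suitable quadratic rates, reducing to SSEP on the closed set $\{0,1\}^{\mathbb T_N}$, satisfy the remaining assumptions with $v_2=-1$.
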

\begin{proof}
Since $v_2 \le -1$ happens only when $\eta$ is distributed according to the Bernoulli product measure, namely when $\kappa = 1$ in the case of binomial distribution.   
In this special case, $\eta_0^2=\eta_0, \eta_1^2=\eta_1$ and therefore $L_{0,1}(\eta_0\eta_1)=\frac{1}{2}L_{0,1}(\eta_0+\eta_1)^2 -\frac{1}{2}L_{0,1} (\eta_0+\eta_1)=0$ which necessary implies $\mathsf{a} =0$.
\end{proof}

From now on, we assume $\mathsf{a} \neq 0$, so that from the previous result $v_2 >-1$ . 
For $x\in\mathbb{T}_N$, let $\rho_x(t)=\mathbb{E}_{N}[\eta_x(t)]$ be a discrete density profile at time $t$ and let us denote by $\overline\eta_x(t)=
\eta_x(t) - \rho_x(t)$ the centering. 
Here, we can easily check that $\rho_x(t)$ satisfies the discrete heat equation $(d/dt)\rho_x(t)=D\Delta\rho_x(t)$ for each $x\in\mathbb T_N$, which is a system of ordinary differential equations.  
Moreover, for $i \in \{1,\dots, \lfloor N/2 \rfloor \}$, let $\phi(t,i)$ be defined by 
\begin{equation}\label{eq:phi}
\phi(t,i)
\coloneqq \sum_{x\in\mathbb T_N} \mathbb{E}_{N}[\overline\eta_x(t) \overline\eta_{x+i}(t)]
\end{equation}
whereas 
\begin{equation}\label{eq:redefine}
\phi(t,0)
\coloneqq \frac{1}{v_2+1} \sum_{x\in\mathbb T_N}
\mathbb{E}_{N}\Big[ 
\eta_x(t)^2
-\big\{(v_2+1)\rho_x(t)^2+v_1\rho_x(t)+v_0\big\} \Big] ,
\end{equation}
in the above we subtract the second moment of the occupation variable with respect to the invariant measure. 
Below, we would like to show the uniform $L^2$-bound of the occupation variables. 

\begin{lemma}
\label{lem:correlation_estimate}
Assume that there exists some constant $C_0>0$ such that 
\begin{equation}
\label{assump:initial_condition_correlation}
\sup_{N\in\mathbb N} 
\max_{i=0,\ldots,\lfloor N/2\rfloor}|\phi(0,i)| < C_0. 
\end{equation}
Then, there exists a constant $ C=C(T)$ such that 
\begin{equation*}
\sup_{N\in\mathbb N}\sup_{0\le t\le T} \max_{i=0,\ldots,\lfloor N/2\rfloor}|\phi(N^2t,i)| < C. 
\end{equation*}
In particular, we have the second moment estimate 
\begin{equation}
\label{eq:moment_estimate} 
\sup_{N\in\mathbb N} \sup_{0\le t\le T}
\frac{1}{N} 
\mathbb E_N\bigg[\sum_{x\in\mathbb T_N} \eta_x^2(N^2t)\bigg]
<C. 
\end{equation} 
\end{lemma}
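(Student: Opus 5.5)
The plan is to derive a closed linear system of ODEs for $\{\phi(t,i)\}_i$ and read it as the forward equation of a (sub-)Markovian random walk on $\{0,\dots,\lfloor N/2\rfloor\}$, possibly with a bounded source term. The bound then follows from a Duhamel formula together with local-time estimates for that walk.

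\textbf{Deriving the system.} By Assumption \ref{assumption:second_order_polymonial}, $\frac{d}{dt}\mathbb E_N[\eta_x\eta_y]=\mathbb E_N[L(\eta_x\eta_y)]$, and $\rho_x$ solves $\dot\rho_x=D\Delta\rho_x$.

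For $|x-y|\ge2$, (A2) gives $L_{z,z+1}(\eta_x\eta_y)=\eta_y L_{z,z+1}\eta_x$ whenever $y\notin\{z,z+1\}$. Hence $L(\eta_x\eta_y)=D(\Delta_x+\Delta_y)(\eta_x\eta_y)$, and after centering $\overline\eta_x\overline\eta_y$ obeys the same discrete Laplacian. Summing over $x$ gives, for $2\le i<\lfloor N/2\rfloor$,
\[
\frac{d}{dt}\phi(t,i)=2D\big(\phi(t,i+1)+\phi(t,i-1)-2\phi(t,i)\big).
\]
For $i=1$ and $i=0$, the bond term $L_{x,x+1}(\eta_x\eta_{x+1})=\mathsf a(\eta_x^2+\eta_{x+1}^2)+\mathsf b\eta_x\eta_{x+1}+\mathsf c(\eta_x+\eta_{x+1})+\mathsf d$ appears. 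So does $L\eta_x^2$, which by the computation in the proof of Lemma \ref{lem:qua_var} equals $D(\eta_{x+1}^2-\eta_x^2)+D(\eta_{x-1}^2-\eta_x^2)-L_{x,x+1}(\eta_x\eta_{x+1})-L_{x-1,x}(\eta_{x-1}\eta_x)$.

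Substituting $v_2,v_1,v_0$ from Theorem \ref{thm:classification_main} ($\mathsf b=-2\mathsf a(v_2+1)$, $\mathsf c=-\mathsf a v_1$, $\mathsf d=-2\mathsf a v_0$), the constant and linear terms should cancel exactly. This is why the normalization \eqref{eq:redefine} subtracts the invariant second moment. What remains should be the closed boundary relations
\begin{align*}
\frac{d}{dt}\phi(t,0)&=c_1\big(\phi(t,1)-\phi(t,0)\big)+R_0(t),\\
\frac{d}{dt}\phi(t,1)&=c_2\big(\phi(t,0)-\phi(t,1)\big)+2D\big(\phi(t,2)-\phi(t,1)\big)+R_1(t),
\end{align*}
with constants $c_1,c_2$ built from $\mathsf a$, $D$ and $v_2+1>0$ (Lemma \ref{lem:special_case_condition_bernoulli}). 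The remainders $R_0,R_1$ are deterministic and come from the $\rho_x$-dependent pieces, for instance $\sum_x(\rho_{x+1}-\rho_x)^2$-type terms.

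At $i=\lfloor N/2\rfloor$, the symmetry $\phi(t,i)=\phi(t,N-i)$ yields a reflecting boundary.

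\textbf{Main obstacle: positivity of the rates.} I expect the hardest step to be checking that $c_1,c_2>0$, or at least that the generator is that of a random walk with nonnegative off-diagonal rates. The role of (A5) should be to fix the sign of $\mathsf a$: applying (A5) to $F=\eta_0\eta_1$-type functions, or using $E_{\nu}[\eta_0(-L\eta_0^2)]$, should give $-\mathsf a/(v_2+1)>0$. The factor $1/(v_2+1)$ in \eqref{eq:redefine} is exactly what symmetrizes the rates between sites $0$ and $1$ into a reversible chain.

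\textbf{Controlling the remainder.} I need $\int_0^{N^2T}|R_i(s)|\,ds$ bounded in $N$. Since $\rho$ solves the discrete heat equation, the energy identity gives
\[
\int_0^{\infty}2D\sum_x(\rho_{x+1}-\rho_x)^2(s)\,ds\le\sum_x\rho_x(0)^2 .
\]
The right-hand side is $O(N)$, not $O(1)$, so an extra gain is needed. This comes from the walk: the contribution of $R$ is weighted by the time the walk spends at $\{0,1\}$. Because $\sum_x\rho_x^2$ is decreasing, one can directly bound $\sup_t|R(t)|\le C\,\frac1N\sum_x\rho_x(0)^2\cdot N$.

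\textbf{Closing via Duhamel.} Write $\phi(t)=e^{tA}\phi(0)+\int_0^te^{(t-s)A}R(s)\,ds$, where $e^{tA}$ is a Markov semigroup, so $\|e^{tA}\phi(0)\|_\infty\le C_0$. For the Duhamel term, $|(e^{(t-s)A}R(s))(i)|\le|R(s)|\,P_i(X_{t-s}\in\{0,1\})$. Since $R=O(1/N)\cdot N$ in the worst case and the local time at $\{0,1\}$ up to time $N^2T$ is $O(N)$ for a one-dimensional walk, this should yield the stated $O(1)$ bound. A sharper bookkeeping is also available: $\sum_x(\rho_{x+1}-\rho_x)^2\le$ (energy)$\cdot$(heat-kernel decay) gives $|R(s)|\lesssim N\min(1,(s)^{-1/2}\cdot\ldots)$.

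This careful pairing of local-time decay ($P(X_u\in\{0,1\})\lesssim u^{-1/2}$) against the decay of $R$ is where I would spend the effort.

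\textbf{Second moment estimate.} Finally, \eqref{eq:moment_estimate} follows because
\[
\frac1N\sum_x\mathbb E_N[\eta_x^2]=\frac{v_2+1}{N}\phi(N^2t,0)+\frac1N\sum_x\big((v_2+1)\rho_x^2+v_1\rho_x+v_0\big),
\]
and $\frac1N\sum_x\rho_x^2(t)\le\frac1N\sum_x\rho_x^2(0)$, which is bounded by the initial assumption.
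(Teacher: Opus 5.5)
Your route is the paper's. The closed system you anticipate is exactly \eqref{eq:correlation_function_time_evolution}, with $c_1=4\mathsf a$, $c_2=2\mathsf a(v_2+1)$, $R_0=(2D-2\mathsf a)\sum_x(\rho_{x+1}-\rho_x)^2$ and $R_1=(\mathsf a(v_2+1)-D)\sum_x(\rho_{x+1}-\rho_x)^2$. It is followed by Duhamel for the walk $X^{(1)}$ on $\{0,\dots,\lfloor N/2\rfloor\}$ and an $O(N)$ bound on its local time at $\{0,1\}$ up to time $N^2T$.

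The gap is in closing the Duhamel term $\int_0^{N^2t}|R(N^2t-s)|\,\mathbf P_i(X^{(1)}_s\in\{0,1\})\,ds$. With the bounds you actually have, the estimate does not close. Your bound $\sup_t|R(t)|\lesssim\sum_x\rho_x(0)^2=O(N)$ times a local time of $O(N)$ gives $O(N^2)$. The energy identity with $\mathbf P_i(\cdot)\le1$ gives $O(N)$. Writing ``$R=O(1/N)\cdot N$'' conflates these.

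The ingredient the paper uses is a uniform gradient bound, $\sup_{0\le t\le T}\sum_x\big(\rho_{x+1}(N^2t)-\rho_x(N^2t)\big)^2\le C/N$. This is what its uniform bound on $\nabla^+_N\rho^N_\cdot$ amounts to. It holds when $\rho_\cdot(0)$ discretizes an $H^1$ profile, and it propagates in time because the discrete Dirichlet energy is nonincreasing under the discrete heat flow. Then the Duhamel term is $O(1/N)\cdot O(N)=O(1)$.

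No sharper pairing of heat-kernel decay against energy decay can replace this input, because the $O(1)$ bound is false without it. Take PEP with $\kappa\ge2$: there $D=\mathsf a=\kappa/2$ and $v_2=-1/\kappa$, so $R_0\equiv0$ and $R_1=-\frac12\sum_x(\rho_{x+1}-\rho_x)^2$. Start ($N$ even) from the deterministic configuration $\eta_x=\kappa\mathbf 1_{\{x\text{ even}\}}$. Then $\phi(0,\cdot)\equiv0$ (since $V(0)=V(\kappa)=0$) and \eqref{assump:initial_condition_moment} holds. Yet $\frac{d}{dt}\phi(t,1)\big|_{t=0}=-\kappa^2N/2$, so $|\phi(\tau,1)|\ge cN$ at a fixed microscopic time $\tau$, i.e. at macroscopic time $\tau/N^2$. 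So a gradient hypothesis on the initial mean profile must be added; the paper invokes it without it being among the lemma's hypotheses.

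What your energy identity does give, with no local-time estimate at all, is $|\phi(N^2t,i)|\le\max_j|\phi(0,j)|+C\sum_x\rho_x(0)^2=O(N)$ under \eqref{assump:initial_condition_moment}. That already suffices for \eqref{eq:moment_estimate}, which is the only part used later. Note that bounding $\frac1N\sum_x\rho_x(0)^2$ in your last step also needs \eqref{assump:initial_condition_moment}, not \eqref{assump:initial_condition_correlation}.

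Finally, the Duhamel representation needs the rates $4\mathsf a$ and $2\mathsf a(v_2+1)$ to be nonnegative, i.e. $\mathsf a>0$. Your guess that (A5) yields $-\mathsf a/(v_2+1)>0$ has the sign reversed. The paper also takes $\mathsf a>0$ for granted; it holds in every example.
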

\begin{Remark}
Note that the assumption \eqref{assump:initial_condition_correlation} is immediately satisfied under the stronger condition \eqref{assump:initial_condition_moment}, which will be used to show boundedness of the empirical measure at the initial time. 
\end{Remark}

\begin{Remark}
We will show that the uniform bound in \eqref{eq:moment_estimate} can be proved using the representation of a one-dimensional random walk which, at all times, records the difference between two occupation variables. In the previous work \cite{franceschini2026hydrodynamic}, we used the attractiveness of the processes under the stronger assumption that the initial measure is stochastically dominated by the invariant one. This condition is now removed, and therefore we can also include processes that are not attractive, e.g., the symmetric inclusion process (SIP). The second-moment estimate can also be proved via stochastic duality, assuming the bound \eqref{assump:initial_condition_moment} at time zero. However, following this route, we could not include the new model of Section \ref{GHS_Section}, for which duality is still under investigation. All duality relations for the remaining models can be found in \cite{carinci2013duality, dualitybook}.  
\end{Remark}

To show \cref{lem:correlation_estimate}, let us compute the time evolution of the correlation function $\phi(t,i)$, which varies in parity of $N$.

\begin{lemma}
We have 
\begin{equation}
\label{eq:correlation_function_time_evolution}
\begin{aligned}
\frac{d}{dt}\phi(t,i)
&=  2D\Delta\phi(t,i)
\mathbf{1}_{i\neq 0,1,\lfloor N/2\rfloor}
+ p_N D\nabla^- \phi(t,i)\mathbf{1}_{i=\lfloor N/2\rfloor} \\
&\quad+\big\{ 2D\nabla^+\phi(t,1)
+2\mathsf{a} (v_2+1)\nabla^-\phi(t,1)
+ (\mathsf{a} (v_2+1)-D)\|\nabla^+\rho_\cdot(t)\|^2_{\ell^2(\mathbb T_N)} \big\} \mathbf{1}_{i=1} \\
&\quad+ \big\{ 4\mathsf{a} \nabla^+\phi(t,0) + (2D-2\mathsf{a} )\|\nabla^+\rho_\cdot(t)\|^2_{\ell^2(\mathbb T_N)} \big\}\mathbf{1}_{i=0} 
\end{aligned}
\end{equation}
where $p_N=4$ if $N$ is even, whereas $p_N=2$ if $N$ is odd, and we introduced the discrete derivatives $\nabla^\pm g(i)=g(i\pm 1) -g(i)$, and $\Delta g(i)=g(i+1)+g(i-1)-2g(i)$ for any real sequence $(g(i))_{i\in\mathbb Z}$. 
\end{lemma}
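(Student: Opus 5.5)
The proof is a direct computation using Kolmogorov's forward equation $\frac{d}{dt}\mathbb E_N[f(\eta(t))]=\mathbb E_N[Lf(\eta(t))]$, applied to the degree-two polynomials $f=\eta_x\eta_{x+i}$ and $f=\eta_x^2$. These are legitimate by (A6.2), since $M_f$ is a martingale for $f\in\mathcal P_2$. For $\rho_x$ we use $\frac{d}{dt}\rho_x=D\Delta\rho_x$, which follows from \cref{lem:lap}. Writing
\[
\mathbb E_N[\overline\eta_x\overline\eta_{x+i}]=\mathbb E_N[\eta_x\eta_{x+i}]-\rho_x\rho_{x+i},
\]
everything reduces to evaluating $L(\eta_x\eta_{x+i})$ and $L\eta_x^2$ as polynomials, taking expectations, and summing over $x\in\mathbb T_N$. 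The sum over $x$ is used to exploit translation invariance of the resulting sums.

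\emph{Far-apart pairs.} If the sites $x$ and $x+i$ do not share a bond, i.e.\ $i\neq 0,\pm1$ mod $N$, then by (A2) each $L_{y,y+1}$ acts on only one factor. Hence $L(\eta_x\eta_{x+i})=D\eta_{x+i}\Delta\eta_x+D\eta_x\Delta\eta_{x+i}$. After subtracting the analogous expression for $\rho_x\rho_{x+i}$ and summing over $x$, this gives $2D\Delta\phi(t,i)$, since $\sum_x\mathbb E[\overline\eta_{x\pm1}\overline\eta_{x+i}]=\phi(t,i\mp1)$. We also need $\phi(t,j)=\phi(t,N-j)$ by symmetry of the sum. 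Near $i=\lfloor N/2\rfloor$ the neighbour $i+1$ is identified with $N-i-1$. For $N$ even, $i+1\equiv N/2-1$, so the Laplacian becomes $2D\cdot 2(\phi(i-1)-\phi(i))$, giving $p_N=4$. For $N$ odd, $i+1\equiv N-(i+1)=i$, so only one difference survives, giving $p_N=2$.

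\emph{$i=1$.} The bond $(x,x+1)$ contributes $L_{x,x+1}(\eta_x\eta_{x+1})$ by (A6.2), while the other bonds act on single factors as before, contributing a Laplacian-type term with a missing piece. In expectation, $\mathsf a(\eta_x^2+\eta_{x+1}^2)+\mathsf b\eta_x\eta_{x+1}+\mathsf c(\eta_x+\eta_{x+1})+\mathsf d$ must be rewritten through $\phi(t,0)$ and $\phi(t,1)$. Summing over $x$, we have $\sum_x\mathbb E\eta_x^2=(v_2+1)\phi(t,0)+\sum_x\{(v_2+1)\rho_x^2+v_1\rho_x+v_0\}$ and $\sum_x\mathbb E\eta_x\eta_{x+1}=\phi(t,1)+\sum_x\rho_x\rho_{x+1}$. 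Then we substitute the relations of \cref{thm:classification_main}: $\mathsf b=-2\mathsf a(v_2+1)$, $\mathsf c=-\mathsf a v_1$, $\mathsf d=-2\mathsf a v_0$. The terms in $v_1$ and $v_0$ cancel exactly, and the $\rho$-quadratic part becomes $\mathsf a(v_2+1)\sum_x(2\rho_x^2-2\rho_x\rho_{x+1})=\mathsf a(v_2+1)\|\nabla^+\rho\|^2$. Combining with the contribution $-\frac{d}{dt}\sum\rho_x\rho_{x+1}$ and the non-bond Laplacian pieces yields $2D\nabla^+\phi(1)+2\mathsf a(v_2+1)\nabla^-\phi(1)$. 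The leftover $\rho$-terms produce $(\mathsf a(v_2+1)-D)\|\nabla^+\rho\|^2$, using summation by parts $\sum_x\rho_{x+1}\Delta\rho_x$ and the like.

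\emph{$i=0$.} Here we use the identity from the proof of \cref{lem:qua_var}:
\[
L_{x,x+1}\eta_x^2=D(\eta_{x+1}^2-\eta_x^2)-L_{x,x+1}(\eta_x\eta_{x+1}).
\]
Summed over $x$, the $D$-terms telescope and vanish, so $\frac{d}{dt}\sum_x\mathbb E\eta_x^2=-2\sum_x\mathbb E L_{x,x+1}(\eta_x\eta_{x+1})$. We substitute the same expansion as in the case $i=1$, subtract $\frac{d}{dt}\sum_x\{(v_2+1)\rho_x^2+v_1\rho_x+v_0\}$ (the $v_1$-term has zero time derivative by conservation), and divide by $v_2+1$, which is nonzero by \cref{lem:special_case_condition_bernoulli}. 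The $\phi(0)$ and $\phi(1)$ terms combine to $4\mathsf a\nabla^+\phi(t,0)$, using $\phi(1)-\phi(0)$. The $\rho$-terms give $-2\mathsf a\|\nabla^+\rho\|^2+2D\|\nabla^+\rho\|^2$, since $\frac{d}{dt}\sum\rho_x^2=-2D\|\nabla^+\rho\|^2$.

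\emph{Main obstacle.} The main difficulty is the bookkeeping at $i=0,1$. The substitution of (A6.2) and \eqref{systemvs} must make the inhomogeneous $v_0$ and $v_1$ terms cancel exactly, and the constants in front of $\|\nabla^+\rho\|^2$ must come out right. I would verify these steps carefully by expanding each sum explicitly, together with the parity analysis at $\lfloor N/2\rfloor$.
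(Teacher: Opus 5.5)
Your proposal is correct and follows essentially the same route as the paper. Both use Kolmogorov's forward equation with $L\eta_x=D\Delta\eta_x$ for the far-apart pairs, with the wrap-around $\phi(t,j)=\phi(t,N-j)$ producing $p_N$. For $i=0,1$, both use the expansion (A6.2) together with \eqref{systemvs}, so that the $v_0,v_1$ terms cancel and $\sum_x\mathbb E_N[L_{x,x+1}(\eta_x\eta_{x+1})]=2\mathsf a(v_2+1)(\phi(t,0)-\phi(t,1))+\mathsf a(v_2+1)\|\nabla^+\rho_\cdot(t)\|^2$, and for $i=0$ both add the telescoping identity for $L_{x,x+1}\eta_x^2$. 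All the constants you state check out.
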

\begin{proof}
First, let us consider the case $i \neq 0,1$.
By Kolmogorov's forward equation, we have 
\begin{align*}
\frac{d}{dt}\phi(t,i) 
= \sum_{x\in\mathbb T_N}
\mathbb E_N[L (\eta_x\eta_{x+i})(t)]
-\sum_{x\in\mathbb T_N}\frac{d}{dt}\big(\rho_x(t)\rho_{x+i}(t)\big) . 
\end{align*}
Here, note that 
\begin{equation*}
L(\eta_x\eta_{x+i})
= (D\Delta \eta_x)\eta_{x+i} + \eta_x(D\Delta \eta_{x+i})
\end{equation*}
provided $i\neq 0,1$. 
Hence, using the fact that $\rho_x(t)$ satisfies the discrete heat equation, 
\begin{align*}
\frac{d}{dt}\phi(t,i)
& = 2D \sum_{x\in\mathbb T_N} 
\mathbb E_N\big[ \overline\eta_x(t) \overline{\eta}_{x+i-1}(t) + \overline\eta_x(t) \overline{\eta}_{x+i+1}(t) - 
2\overline\eta_x(t) \overline{\eta}_{x+i}(t) \big] =  2D \Delta \phi(t,i)
\end{align*}
if $i \neq 0,1,\lfloor N/2 \rfloor$. 
Next, if $i=\lfloor N/2 \rfloor$ and $N$ is even, then note that 
\begin{equation*}
\mathbb E_N\bigg[ \sum_{x\in\mathbb T_N} \overline\eta_x(t) \overline\eta_{x+i+1}(t)\bigg] 
= \mathbb E_N\bigg[ \sum_{x\in\mathbb T_N} \overline\eta_x(t) \overline\eta_{x+i-1}(t)\bigg] 
=\phi(t,i-1)
\end{equation*}
and thus 
\begin{equation*}
\frac{d}{dt}\phi(t,i) = 4D (\phi(t,i-1)-\phi(t,i)).
\end{equation*}
On the other hand, if $i=\lfloor N/2 \rfloor$ and $N$ is odd, note that 
\begin{equation*}
\mathbb E_N\bigg[ \sum_{x\in\mathbb T_N} \overline\eta_x(t) \overline\eta_{x+i+1}(t)\bigg] 
= \mathbb E_N \bigg[ \sum_{x\in\mathbb T_N} \overline\eta_x(t) \overline\eta_{x+i}(t)\bigg] 
=\phi(t,i). 
\end{equation*}
Thus, we have that 
\begin{equation*}
\frac{d}{dt}\phi(t,i) = 2D (\phi(t,i-1)-\phi(t,i)).
\end{equation*}
Next, we consider the case $i=1$.
Again by the Kolmogorov forward equation, we have that 
\begin{equation*}
\frac{d}{dt}\phi(t,1)
=\sum_{x\in\mathbb T_N}\mathbb E_N[L(\eta_x\eta_{x+1})(t)]
- \sum_{x\in\mathbb T_N} \frac{d}{dt}(\rho_x(t)\rho_{x+1}(t)). 
\end{equation*}
Here, note that  
\begin{equation*}
\begin{aligned}
L(\eta_x\eta_{x+1})
&=L_{x,x+1}(\eta_x\eta_{x+1})
+ (L_{x-1,x}\eta_x)\eta_{x+1}
+ \eta_x(L_{x+1,x+2}\eta_{x+1})\\
&=L_{x,x+1}(\eta_x\eta_{x+1})
+ D(\eta_{x-1}-\eta_x)\eta_{x+1}
+ D\eta_x(\eta_{x+2}-\eta_{x+1}) . 
\end{aligned}
\end{equation*}
This immediately yields 
\begin{equation*}
\begin{aligned}
\frac{d}{dt}\phi(t,1) 
&= \sum_{x\in\mathbb T_N}\mathbb E_N [(L_{x,x+1} (\eta_x\eta_{x+1}))(t)] \\
&\quad+ D \sum_{x\in\mathbb T_N}\mathbb E_N [(\eta_{x-1}(t)-\eta_x(t) )\eta_{x+1}(t)] 
+  D \sum_{x\in\mathbb T_N}\mathbb E_N [ \eta_x(t) (\eta_{x+2} (t)-\eta_{x+1}(t))] \\
&\quad- 2D \sum_{x\in\mathbb T_N}\rho_{x-1}(t)\rho_{x+1}(t) +2D \sum_{x\in\mathbb T_N}\rho_x(t)\rho_{x+1}(t) -D \sum_{x\in\mathbb T_N} (\rho_x(t)-\rho_{x+1}(t))^2 \\
&= \sum_{x\in\mathbb T_N}\mathbb E_N \big[ (L_{x,x+1} (\eta_x\eta_{x+1}))(t)\big] + 2D (\phi(t,2)- \phi(t,1)) -D \sum_{x\in\mathbb T_N} (\rho_x(t)-\rho_{x+1}(t))^2. 
\end{aligned}
\end{equation*}
Now, we compute the first term of the last expression.
By Assumption \ref{assumption:second_order_polymonial}, we have that 
\begin{equation}
\label{eq:computation_degree2_sum}
\begin{aligned}
& \sum_{x\in\mathbb T_N}\mathbb E_N [(L_{x,x+1} (\eta_x\eta_{x+1})) (t)]\\
&\quad= 
2\mathsf{a} \sum_{x\in\mathbb T_N}\mathbb E_N [\eta_x(t)^2] 
+ \mathsf{b}   \sum_{x\in\mathbb T_N}\mathbb E_N [\eta_x(t)\eta_{x+1}(t)] 
+ 2\mathsf{c}   \sum_{x\in\mathbb T_N}\rho_x(t) 
+ \mathsf{d}    N \\
&\quad= 
2\mathsf{a} \left((v_2+1)\phi(t,0) +(v_2+1)\sum_{x\in\mathbb T_N}\rho_x(t)^2 + v_1\sum_{x\in\mathbb T_N}\rho_x(t) + v_0N \right) \\
&\qquad+ \mathsf{b}   \sum_{x\in\mathbb T_N}\mathbb E_N [\eta_x(t)\eta_{x+1}(t)] + 2\mathsf{c}   \sum_{x\in\mathbb T_N}\rho_x(t) + \mathsf{d}    N \\
&\quad= 2\mathsf{a} (v_2+1)\phi(t,0) +2\mathsf{a}  (v_2+1)\sum_{x\in\mathbb T_N}\rho_x(t)^2   - 2\mathsf{a}  (v_2+1)\sum_{x\in\mathbb T_N}\mathbb E_N [\eta_x(t)\eta_{x+1}(t)]  \\
&\quad = 2\mathsf{a} (v_2+1)(\phi(t,0) -\phi(t,1)) 
+\mathsf{a}  (v_2+1)\sum_{x\in\mathbb T_N}(\rho_x(t)-\rho_{x+1}(t))^2.
\end{aligned}
\end{equation}
Above we used the definition of $\phi(t,0)$ and the relations in \eqref{systemvs}. 
The last display deduces the expression in the assertion. 
Finally, let us consider the case $i=0$.
Recalling again the definition of $\phi(t,0)$, we have that 
\begin{equation*}
\frac{d}{dt}\phi(t,0)
= \frac{1}{v_2+1}\sum_{x\in\mathbb T_N}\mathbb E_N[(L \eta_x^2)(t)]
-\frac{1}{v_2+1}\frac{d}{dt}\sum_{x\in\mathbb T_N} 
\big( (v_2+1)\rho_x(t)^2+v_1\rho_x(t)+v_0\big). 
\end{equation*}
Moreover, recall that we have the following identities: 
\begin{equation*}
\begin{aligned}
&L_{x,x+1}\eta_x^2
= D(\eta_{x+1}^2-\eta_x^2)-L_{x,x+1}(\eta_x\eta_{x+1}),\\
&L_{x-1,x}\eta_x^2
= D(\eta_{x-1}^2-\eta_x^2)-L_{x-1,x}(\eta_{x-1}\eta_{x}).
\end{aligned}
\end{equation*}
Thus, using the fact that the telescopic sum vanishes, we have that 
\begin{align*}
\frac{d}{dt}\phi(t,0) 
&= \frac{1}{v_2+1}\sum_{x\in\mathbb T_N}
\mathbb E_N\big[ (L_{x-1,x} \eta_x^2)(t)
+ (L_{x,x+1} \eta_x^2)(t)\big]  
- \frac{d}{dt} \sum_{x\in\mathbb T_N} \rho_x(t)^2 \\
&= \frac{1}{v_2+1}\sum_{x\in\mathbb T_N}\mathbb E_N[D(\eta_{x+1}^2(t)-\eta_x^2(t))-(L_{x,x+1}(\eta_x\eta_{x+1}))(t)] \\
&\quad+ \frac{1}{v_2+1}\sum_{x\in\mathbb T_N}\mathbb E_N[D(\eta_{x-1}^2(t)-\eta_x^2(t))-(L_{x-1,x}(\eta_{x-1}\eta_x))(t)] \\
&\quad - 2D \sum_{x\in\mathbb T_N} \rho_x(t) \Delta \rho_{x}(t)
\\
&= - \frac{2}{v_2+1}\sum_{x\in\mathbb T_N}\mathbb E_N [(L_{x,x+1} (\eta_x\eta_{x+1}))(t)] +2D \sum_{x\in\mathbb T_N} (\rho_x(t)-\rho_{x+1}(t))^2.
\end{align*}
Hence, applying the computation \eqref{eq:computation_degree2_sum} for the case $i=1$, we have that 
\begin{align*}
\frac{d}{dt}\phi(t,0) 
=4\mathsf{a} (\phi(t,1) -\phi(t,0))
+(2D-2\mathsf{a} ) \sum_{x\in\mathbb T_N} (\rho_x(t)-\rho_{x+1}(t))^2.
\end{align*}
\end{proof}

This allows us to use the random walk representation and estimate $\phi(t,0)$ uniformly in $t$ and $N$. Additionally, note that this random walk depends only on $v_2, D$ and $\mathsf{a}$.  

\subsection{Proof of \texorpdfstring{\cref{lem:correlation_estimate}}{correlation estimate}}  
\label{subsec4.2.1}
Let $\mathbb I_N=\{0,1,\ldots,\lfloor N/2\rfloor\}$.  
Here let us write the time evolution of the correlation function given in \eqref{eq:correlation_function_time_evolution} in the following way:
\begin{equation}
\label{eq:correlation_function_time_evolution_general}
\frac{d}{dt}\phi(t,i)
= \mathscr L\phi(t,i)
+ \mathfrak g(t,i)
\end{equation}
where the operator $\mathscr L$, which is acting on the discrete space variable $i$ in the last expression, is defined by 
\begin{equation*}
\begin{aligned}
\mathscr LG(i)
&=2D\Delta G(i)\mathbf{1}_{i\neq 0,1,\lfloor N/2\rfloor} 
+ p_N D\nabla^-G(i)\mathbf{1}_{i=\lfloor N/2\rfloor} \\
&\quad+ \big[2D\nabla^+G(i) +2\mathsf{a}  (v_2+1) \nabla^-G(i)\big]\mathbf{1}_{i=1}
+ 4\mathsf{a} \nabla^+G(i)\mathbf{1}_{i=0} 
\end{aligned}
\end{equation*}
for any $\{G(i)\}_{i\in\mathbb I_N}$ and the reminder term $\mathfrak g(t,i)$ is defined by 
\begin{equation*}
\mathfrak g(t,i)
= (\mathsf{a} (v_2+1)-D) \|\nabla^+\rho_\cdot(t)\|^2_{\ell^2(\mathbb T_N)} \mathbf{1}_{i=1}
+ (2D-2\mathsf{a} )\|\nabla^+\rho_\cdot(t)\|^2_{\ell^2(\mathbb T_N)} \mathbf{1}_{i=0} .
\end{equation*}
Now, from \eqref{eq:correlation_function_time_evolution_general}, by Duhamel's principle, we have the following representation of the correlation function: 
\begin{equation*}
\phi(t,i)
= \mathbf E_i
\bigg[ \phi(0,X_t^{(1)})
+ \int_0^t \mathfrak g(t-s,X_s^{(1)})ds \bigg]
\end{equation*}
where $\{X^{(1)}_t\}_{t\ge 0}$ is a random walk on $\mathbb I_N$ with infinitesimal generator $\mathscr L$, and we denote by $\mathbf P_i$ the associated probability measure of the random walk, provided it starts from $i\in\mathbb I_N$, and we write the expectation with respect to $\mathbf P_i$ by $\mathbf E_i$. 
Recalling the definition of $\mathfrak g$, we have that 
\begin{equation*}
\begin{aligned}
\phi(t,i)
&= \mathbf E_i[\phi(0,X^{(1)}_t)]\\
&+ \int_0^t \sum_{j\in\mathbb I_N} 
\big[ (\mathsf{a} (v_2+1)-D)\mathbf 1_{j=1}
+ (2D-2\mathsf{a} ) \mathbf 1_{j=0} \big] \|\nabla^+\rho_\cdot(t-s)\|^2_{\ell^2(\mathbb T_N)}  \mathbf P_i(X^{(1)}_s=j)
ds .
\end{aligned}
\end{equation*}
Now, to be in the diffusive time scaling, inserting $t=tN^2$, a change of variable yields 
\begin{equation*}
\begin{aligned}
\phi(tN^2&,i)
= \mathbf E_i[\phi(0,X^{(1)}_{tN^2})]\\
&+ \int_0^t \sum_{j\in\mathbb I_N} 
\big[ (\mathsf{a} (v_2+1)-D)\mathbf 1_{j=1}
+ (2D-2\mathsf{a} ) \mathbf 1_{j=0} \big] \|\nabla^{+}_N \rho^N_\cdot(t-s)\|^2_{\ell^2(\mathbb T_N)} \mathbf P_i(X^{(1)}_{sN^2}=j)
ds 
\end{aligned}
\end{equation*}
where we set $\rho^N_\cdot(t)=\rho_\cdot(tN^2)$. 
Hence, taking the supremum over $i$ and $t$, we have the bound 
\begin{equation*}
\sup_{0\le s\le t} \|\phi(sN^2,\cdot)\|_{\ell^\infty(\mathbb I_N)} 
\le \|\phi(0,\cdot)\|_{\ell^\infty(\mathbb I_N)}
+ C \sup_{0\le s\le t} \|\nabla^+_N\rho^N_\cdot(s)\|^2_{\ell^2(\mathbb T_N)}
\max_{i\in\mathbb I_N} \mathcal T^{(1)}_N(t,i)
\end{equation*}
with some $C=C(\mathsf{a} ,v_2,D)>0$, where 
\begin{equation*}
\mathcal T^{(1)}_N(t,i)
=\int_0^t \mathbf P_i(X^{(1)}_{sN^2}\in \{0,1\}) ds 
\end{equation*}
is the local time that the random walk $\{X^{(1)}_{tN^2};t\ge 0\}$ starting from $i\in\mathbb I_N$ stays at points $0$ and $1$ until time $t$. 
Then, we can show the following estimate (\cref{lem:local_time_estimate}) for this local time, which immediately completes the proof of \cref{lem:correlation_estimate} since we have a uniform bound 
$$
\sup_{N\in\mathbb N}\sup_{0\le t\le T}\| \nabla^+_N\rho^N_\cdot(t)\|_{\ell^2(\mathbb T_N)}<C
$$
with some $C=C(T)>0$.

\begin{lemma}
\label{lem:local_time_estimate}
Assume $N\ge 4$. 
Then, there exists some $C=C(D,\mathsf{a} )>0$ such that 
\begin{equation*}
\sup_{0\le t\le T} \max_{i\in\mathbb I_N}
\mathcal T^{(1)}_N(t,i)
\le CT/N.  
\end{equation*}
\end{lemma}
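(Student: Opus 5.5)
The plan is to write the local time in the original time scale and bound the heat kernel of the auxiliary walk $X^{(1)}$ uniformly in $N$. After the substitution $u=sN^2$,
\begin{equation*}
\mathcal T^{(1)}_N(t,i)=\frac{1}{N^2}\int_0^{tN^2}\mathbf P_i\big(X^{(1)}_u\in\{0,1\}\big)\,du ,
\end{equation*}
so it suffices to show
\begin{equation*}
\sup_{i,j\in\mathbb I_N}\mathbf P_i\big(X^{(1)}_u=j\big)\le C\Big(\frac{1}{\sqrt{1+u}}+\frac1N\Big)
\end{equation*}
with $C=C(D,\mathsf a,v_2)$ independent of $N$. Integrating this over $[0,TN^2]$ gives $C(N\sqrt T+TN)$, and dividing by $N^2$ gives a bound of order $T/N$ for $T$ bounded below. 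For small $T$ one can simply use $\mathcal T^{(1)}_N\le T$ together with the monotonicity of $\mathcal T^{(1)}_N(t,i)$ in $t$, adjusting the constant so that it depends on $T$ as well.

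\textbf{Step 1: $X^{(1)}$ is a genuine Markov chain.} I first check that $\mathscr L$ is the generator of a nearest-neighbour chain on $\mathbb I_N$ with nonnegative rates. The rates are: $0\to1$ at rate $4\mathsf a$; $1\to0$ at rate $2\mathsf a(v_2+1)$; $1\to2$ at rate $2D$; $i\to i\pm1$ at rate $2D$ in the bulk; $\lfloor N/2\rfloor\to\lfloor N/2\rfloor-1$ at rate $p_ND$. We have $D>0$ by Lemma~\ref{lem:gradient_condition} and $v_2+1>0$ by Lemma~\ref{lem:special_case_condition_bernoulli}, so it remains to show $\mathsf a>0$. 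I would get this by evaluating the identity $2\mathsf a\,E_{\nu_\rho}[\eta_0^2]+\mathsf b\rho^2+2\mathsf c\rho+\mathsf d=0$ against (A5) applied to $F=\eta_0\eta_1$, or alternatively from the pointwise nonnegativity of the carré du champ $L_{0,1}(\eta_0^2)-2\eta_0L_{0,1}\eta_0\ge0$. This sign check is a detail I would settle first; if it fails, the representation itself must be revisited.

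\textbf{Step 2: the chain is reversible with a nearly uniform reversible measure.} Detailed balance gives a reversible measure $\pi$ with
\begin{equation*}
\pi(i)=1 \text{ for } 1\le i<\lfloor N/2\rfloor,\qquad \pi(0)=\frac{v_2+1}{2},\qquad \pi(\lfloor N/2\rfloor)=\frac{2}{p_N}.
\end{equation*}
Thus $\pi$ is bounded above and below by constants independent of $N$, and every conductance $\pi(i)r(i,i+1)$ lies in $[c,c^{-1}]$. The Dirichlet form of $X^{(1)}$ is therefore comparable, uniformly in $N$, to that of simple random walk on a segment of length $M=\lfloor N/2\rfloor$.

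\textbf{Step 3: the heat-kernel bound, which I expect to be the main obstacle.} It must hold uniformly in $N$ and in all times up to $TN^2$. I would first reduce off-diagonal terms to diagonal ones using reversibility and Cauchy--Schwarz,
\begin{equation*}
\mathbf P_i(X_u=j)\le\sqrt{\pi(j)/\pi(i)}\,\sqrt{\mathbf P_i(X_{u}=i)\,\mathbf P_j(X_{u}=j)} .
\end{equation*}
I would then bound the return probabilities via the one-dimensional Nash inequality for functions on a segment, after subtracting the mean:
\begin{equation*}
\|f-\pi(f)\|_{\ell^2(\pi)}^{6}\le C\,\mathcal E(f)\,\|f\|_{\ell^1(\pi)}^{4}.
\end{equation*}
This inequality holds with $C$ independent of $M$ because the conductances are uniformly comparable to those of simple random walk. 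The standard Nash argument then gives $\mathbf P_j(X_u=j)-\pi(j)/\pi(\mathbb I_N)\le C u^{-1/2}$. Since $\pi(\mathbb I_N)\asymp N$, this yields the claimed bound $C(1/\sqrt{1+u}+1/N)$. Substituting into the integral in the first paragraph completes the proof of the lemma.
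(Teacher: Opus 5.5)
Your proof is correct, and it takes a genuinely different route from the paper. The paper never bounds the transition probabilities of $X^{(1)}$. Instead it applies Dynkin's formula to the single test function $f(i)=-(i-i_0)^2$ with $i_0=\lfloor N/2\rfloor-\tfrac12$, chosen so that $\mathscr Lf(\lfloor N/2\rfloor)=0$. For this $f$, $\mathscr Lf=-4D$ in the bulk and $\mathscr Lf(0)=8\mathsf a(\lfloor N/2\rfloor-1)$. Since $\mathbf E_i[(X^{(1)}_{tN^2}-i_0)^2]\ge 0$ and $(i-i_0)^2\le N^2$, a boundary drift of order $N$ forces $N^3\int_0^t\mathbf P_i(X^{(1)}_{sN^2}\in\{0,1\})\,ds\lesssim N^2(1+t)$. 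This takes a few lines and is fully elementary.

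However, it also needs $\mathscr Lf(1)=4D(\lfloor N/2\rfloor-2)+2\mathsf b(\lfloor N/2\rfloor-1)$ to be of order $+N$, i.e.\ $2D+\mathsf b>0$: from site $1$ the walk must jump to $2$ (rate $2D$) strictly faster than to $0$ (rate $-\mathsf b=2\mathsf a(v_2+1)$). This holds for the redistribution models. It degenerates for independent walkers ($D=\tfrac12$, $\mathsf b=-1$, so $\mathscr Lf(1)=-4D$), and it fails for SIP ($D=\mathfrak s$, $\mathsf b=-(2\mathfrak s+1)$). The paper's displayed Dynkin identity writes the $i=1$ term as $+\mathscr Lf(1)$ instead of $-\mathscr Lf(1)$; averaged against $\pi$, its right-hand side should vanish but does not. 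That sign slip hides the restriction.

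Your argument uses only that the conductances $2\mathsf a(v_2+1)$, $2D$ and the weights of $\pi$ are bounded above and below uniformly in $N$. It is therefore insensitive to the sign of $2D+\mathsf b$ and covers the whole class, SIP included. The price is the Nash/ultracontractivity machinery. A Lyapunov-function proof can be rescued, but the test function must be adapted to $\pi$, e.g.\ by solving $\mathscr Lf=-c$ away from the target site.

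Two smaller points.

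First, your Step 1 closes in one line along your second suggestion. By the computation in the proof of Lemma~\ref{lem:qua_var}, $L_{0,1}\eta_0^2-2\eta_0L_{0,1}\eta_0=D(\eta_0-\eta_1)^2-L_{0,1}(\eta_0\eta_1)$. At $\eta_0=\eta_1=k$, the relations $\mathsf b=-2\mathsf a(v_2+1)$, $\mathsf c=-\mathsf av_1$, $\mathsf d=-2\mathsf av_0$ of Theorem~\ref{thm:classification_main} turn this into $2\mathsf aV(k)$, where $V(k)=v_2k^2+v_1k+v_0$. Hence at the constant configuration $\eta\equiv k$ the full carr\'e du champ $L\eta_0^2-2\eta_0L\eta_0$ equals $4\mathsf aV(k)$. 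It is nonnegative: let $t\downarrow0$ in the $N_f$-martingale identity of (A6.1) started from $\eta\equiv k$. Moreover $V(k)=\mathrm{Var}_{\nu_k}[\eta_0]>0$ for $k\in S\cap\overline\rho(I^o)$, which is nonempty once the Bernoulli case is excluded. So $\mathsf a>0$. The paper uses this fact implicitly, since it treats $4\mathsf a$ and $2\mathsf a(v_2+1)$ as jump rates, but never proves it.

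Second, for small $T$ the bound $\mathcal T^{(1)}_N\le T$ gives no factor $1/N$. Use monotonicity instead: $\sup_{t\le T}\mathcal T^{(1)}_N(t,i)\le\mathcal T^{(1)}_N(T\vee1,i)$, which with your estimate gives $C(T\vee1)/N$. That is exactly what the paper's proof produces ($C_2(t\vee1)/N$), and it is the right form of the statement. A bound $CT/N$ with $C$ independent of $T$ cannot hold for small $T$, because $\mathcal T^{(1)}_N(T,0)\asymp\sqrt T/N$ when $N^{-2}\ll T\ll1$.
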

\begin{proof}
Let us consider a function $f:\mathbb I_N\to\mathbb R$ with the following form:
\begin{equation*}
f(i)= -(i-i_0)^2
\end{equation*}
where $i_0=\lfloor N/2\rfloor -1/2$. 
Then, a simple computation shows that 
\begin{equation*}
\begin{aligned}
\mathscr Lf(i)
&= -4D\mathbf{1}_{i\neq 0,1,\lfloor N/2\rfloor}
- p_ND (1-2i+2i_0)\mathbf{1}_{i=\lfloor N/2\rfloor} \\
&\quad - [2D(1+2i-2i_0)+ 2\mathsf{a}  (v_2+1)(1-2i+2i_0) ]\mathbf{1}_{i=1} 
- 4\mathsf{a} (1+2i-2i_0) \mathbf{1}_{i=0} \\
&= -4D\mathbf{1}_{i\neq 0,1,\lfloor N/2\rfloor}
- 4D(2-\lfloor N/2\rfloor) \mathbf{1}_{i=1} 
+2 \mathsf{b} (\lfloor N/2\rfloor -1) \mathbf{1}_{i=1} 
- 4\mathsf{a} (2-2\lfloor N/2\rfloor) \mathbf{1}_{i=0} 
\end{aligned}
\end{equation*}

where in the second identity we used the definition of $i_0$. 
Moreover, by Dynkin's formula, we know that the process 
\begin{equation*}
\begin{aligned}
f(X^{(1)}_{tN^2}) - f(X^{(1)}_0)
- \int_0^t N^2\mathscr L f(X^{(1)}_{sN^2})ds 
\end{aligned}
\end{equation*}
is mean-zero martingale with respect to the natural filtration. 
Now, take the expectation $\mathbf E_i[\cdot]$ in the definition of the martingale in the last display.  
Then for each $t\ge 0$ we have that 
\begin{equation*}
\begin{aligned}
 \mathbf E_i[(X^{(1)}_{tN^2}-i_0)^2] 
-(i-i_0)^2 
&= \int_0^t 4DN^2\mathbf P_i(X^{(1)}_{sN^2} \neq 0,1,\lfloor N/2\rfloor) ds 
\\
&\quad+ \int_0^t \left( 4D \left(\lfloor N/2\rfloor -2 \right) + 2\mathsf{b}   \left(\lfloor N/2\rfloor -1 \right) \right)N^2 
\mathbf P_i(X^{(1)}_{sN^2}=1) ds \\
&\quad- \int_0^t 4\alpha(2\lfloor N/2\rfloor-2)N^2
\mathbf P_i(X^{(1)}_{sN^2}=0)ds \\
&\le 4DN^2t -C_1N^3\int_0^t \mathbf P_i(X^{(1)}_{sN^2} \in \{0,1\}) ds   
\end{aligned}
\end{equation*}
with some $C_1=C_1(D,\mathsf{a} , \mathsf{b}  )>0$.  
Therefore, we have the bound 
\begin{equation*}
\begin{aligned}
\int_0^t \mathbf P_i(X^{(1)}_{sN^2}\in \{0,1\})ds 
\le \frac{4DN^2t+ 4N^2}{C_1N^3} 
\le C_2(t\vee 1)/N
\end{aligned}
\end{equation*}
with another constant $C_2=C_2(D,\mathsf{a} , \mathsf{b}  ) >0$. 
This completes the proof.  
\end{proof}

\section{Proof of \texorpdfstring{\cref{thm:hydrodynamic_general_main}}{hydrodynamic limit}} 
\label{sec:proof_completion} 
In this section we give the proof of the hydrodynamic limit using the results obtained in the previous section. We start by presenting the explicit expressions for the Dynkin martingales. 

\subsection{Estimate of the martingale term}
From \textnormal{(A5)} in {Assumption \ref{assmp:generator_element_basic},}
a direct computation based on Lemma \ref{lem:lap} and summation by parts, shows that
\begin{equation}
\label{eq:dynkin_volume}
\begin{aligned}
M^{N}_t(G)
&=\langle \pi^{N}_t,G\rangle-\langle \pi^{N}_0,G\rangle
-\int_0^tD\langle \pi^{N}_s,\Delta_N G\rangle ds.
\end{aligned}
\end{equation}
Moreover, from Lemma \cref{lem:qua_var} and \cref{K-estimate}, we get that 
\begin{equation*}
\begin{aligned}
\langle M^{N}(G)\rangle_t
&=\int_0^t 
\dfrac{1}{N^2}\sum_{x \in \mathbb{T}_N} \left( \nabla^+_N  G( \tfrac{x}{N}) \right)^2 \big[ D ( \eta_x- \eta_{x+1})^2 
- L_{x,x+1}(\eta_x\eta_{x+1}) \big] (s)d s \\
&\le \int_0^t 
\dfrac{K}{N^2}\sum_{x \in \mathbb{T}_N} \left( \nabla^+_N  G( \tfrac{x}{N})\right)^2 \big( \eta_x^2 +\eta_{x+1}^2 + 1 \big)(s) ds.
\end{aligned}
\end{equation*}
Hence, we conclude up to now that 
\begin{equation}
\label{eq:martingale_vanishment} 
\lim_{N \to \infty}
\mathbb E_N \Big[ \sup_{0\le t\le T} M^N_t(G)^2 \Big] =0
\end{equation}
for any $G\in C^2(\mathbb T)$, where we used Doob's inequality. 

In the next subsection, we prove that the sequence $\{\langle \pi^N_t ,G\rangle\}_{N\in\mathbb N}$ is tight with respect to the Skorohod topology in ${D([0,T],H^{-m}(\mathbb T))}$.

\subsection{Tightness}
\label{subsec:HDL_tightness}

To prove tightness, we follow the approach of \cite[Chapter 11]{kipnis1999scaling}. To that end, we need to introduce some notation. 
\begin{defin}
For $\delta>0$ and a path $\pi$ in ${D([0,T],H^{-m}(\mathbb T))}$, the uniform modulus of continuity of $\pi$, is defined by
\begin{equation*}
\omega_{\delta}(\pi)=\sup_{\substack{|s-t|<\delta, \\0\leq{s,t}\leq{T}}}\|\pi_{t}-\pi_{s}\|_{-m}.
\end{equation*}
\end{defin}
The first result gives sufficient conditions for a subset to be weakly relatively compact.

\begin{lemma}
\label{lem:tightness_sufficient_condition} 
A subset $A$ of $D([0,T],H^{-m}(\mathbb T))$ is relatively compact for the uniform weak topology if $\sup_{Y\in{A}}\sup_{0\leq{t}\leq{T}}\|\pi_{t}\|_{-m}<\infty$ and $\lim_{\delta\rightarrow{0}}\sup_{\pi\in{A}}\omega_{\delta}(\pi)=0$. 
\end{lemma}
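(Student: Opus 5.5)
The plan is to reduce the statement to the Arzelà–Ascoli theorem for functions with values in a compact metric space. The key tool is that closed balls of the separable Hilbert space $H^{-m}(\mathbb T)$ are weakly compact and, on such balls, the weak topology is metrizable.

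\textbf{Setting up the metric.} Fix the orthonormal basis $\{h_z\}_{z\in\mathbb Z}$ of \eqref{eq:sobolev_CONS_definition} and normalize it in $H^{m}(\mathbb T)$ by $e_z=\gamma_z^{-m/2}h_z$. On the closed ball
\begin{equation*}
B_R=\{\pi\in H^{-m}(\mathbb T):\|\pi\|_{-m}\le R\}
\end{equation*}
we use the metric $d(\pi,\pi')=\sum_{z}2^{-|z|}\big(|\langle \pi-\pi',e_z\rangle|\wedge 1\big)$, which metrizes the weak topology there. By Banach–Alaoglu (or directly, since $H^{-m}$ is a Hilbert space) $(B_R,d)$ is a compact metric space. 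The uniform weak topology on paths with values in $B_R$ is the topology of $\sup_{0\le t\le T} d(\pi_t,\pi'_t)$.

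\textbf{Step 1: confinement.} Let $R=\sup_{\pi\in A}\sup_{t}\|\pi_t\|_{-m}<\infty$. By the first hypothesis, every path of $A$ takes values in the compact metric space $(B_R,d)$.

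\textbf{Step 2: continuity and equicontinuity.} The second hypothesis $\lim_{\delta\to0}\sup_{\pi\in A}\omega_\delta(\pi)=0$ first shows that every $\pi\in A$ is in fact norm-continuous: any jump of size $\varepsilon$ would force $\omega_\delta(\pi)\ge\varepsilon$ for every $\delta>0$. Hence $A\subset C([0,T],H^{-m})$. Next, since $\|e_z\|_m=1$,
\begin{equation*}
|\langle \pi_t-\pi_s,e_z\rangle|\le \|\pi_t-\pi_s\|_{-m},
\end{equation*}
so $d(\pi_t,\pi_s)\le C\|\pi_t-\pi_s\|_{-m}$ with $C=\sum_z 2^{-|z|}$. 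Therefore $A$ is uniformly equicontinuous as a family of maps $[0,T]\to(B_R,d)$.

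\textbf{Step 3: Arzelà–Ascoli.} For maps from the compact interval $[0,T]$ into the compact metric space $B_R$, uniform equicontinuity is enough for relative compactness in the uniform topology. Concretely, one extracts by a diagonal argument a subsequence converging at all rational times, and equicontinuity upgrades this to uniform convergence. The limit is again continuous, with norm bounded by $R$ by weak lower semicontinuity of the norm. This gives relative compactness of $A$ in $C([0,T],H^{-m})$ with the uniform weak topology, and hence in $D([0,T],H^{-m})$, since uniform convergence implies Skorohod convergence.

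\textbf{Main obstacle.} There is no hard estimate here. The only point needing care is identifying what ``uniform weak topology'' means and checking that it coincides with the topology of the metric $\sup_t d(\pi_t,\pi'_t)$ on $B_R$-valued paths. This relies on separability of $H^{-m}(\mathbb T)$ and on working inside a bounded set; I would state this explicitly, as in \cite[Chapter 11]{kipnis1999scaling}.
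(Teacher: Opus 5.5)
The paper gives no proof of this lemma (it is quoted from \cite[Chapter 11]{kipnis1999scaling}). Your argument is the standard justification and is correct: you confine all paths to a weakly compact, weakly metrizable ball of $H^{-m}(\mathbb T)$, get equicontinuity from $|\langle \pi_t-\pi_s,e_z\rangle|\le\|\pi_t-\pi_s\|_{-m}$, and apply Arzel\`a--Ascoli.

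The one point you should make explicit is that the limit path is norm-continuous, not merely weakly continuous, so that it actually belongs to $D([0,T],H^{-m}(\mathbb T))$. This follows from the weak lower semicontinuity you already invoke, applied to increments: $\|\pi_t-\pi_s\|_{-m}\le\liminf_n\|\pi^n_t-\pi^n_s\|_{-m}\le\sup_{\pi\in A}\omega_\delta(\pi)$ for $|t-s|<\delta$.
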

From this lemma, we obtain a criterion for tightness of a sequence of probability measures defined on $D([0,T],H^{-m}(\mathbb T))$.
\begin{lemma}\label{lem:tightness_sufficient_condition_sec}
A sequence $\{P_{N}, N\geq{1}\}$ of probability measures defined on $D([0,T],H^{-m}(\mathbb T))$ is tight if the following two conditions hold:
\begin{itemize}
\item[\textnormal{(i)}] $\lim_{A\rightarrow{\infty}}\limsup_{N\rightarrow{\infty}}P_{N}\big(\sup_{0\leq{t}\leq{T}}\|\pi_{t}\|_{-m}>A\big)=0$, 

\item[\textnormal{(ii)}]   $\lim_{\delta\rightarrow{0}}\limsup_{N\rightarrow{\infty}}P_{N}\big(\omega_{\delta}(\pi)\geq{\varepsilon}\big)=0$ for every $\varepsilon>0$. 
\end{itemize}
\end{lemma}

Now, the main result of this section is the following. 

\begin{lemma}
Let {$m>5/2$.}
Then, the sequence of probability measures $\{Q^{N}_m\}_{N\in\mathbb N}$ is tight in $D([0,T],H^{-m}(\mathbb T))$.  
\end{lemma}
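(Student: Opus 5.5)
We verify the two conditions of \cref{lem:tightness_sufficient_condition_sec} by expanding in the orthonormal basis $\{h_z\}_{z\in\mathbb Z}$. By Parseval,
\begin{equation*}
\|\pi^N_t\|_{-m}^2=\sum_{z\in\mathbb Z}\gamma_z^{-m}\langle \pi^N_t,h_z\rangle^2 .
\end{equation*}
For each $z$, the Dynkin decomposition \eqref{eq:dynkin_volume} with $G=h_z$ gives
\begin{equation*}
\langle \pi^N_t,h_z\rangle=\langle \pi^N_0,h_z\rangle+\int_0^t D\langle \pi^N_s,\Delta_N h_z\rangle ds+M^N_t(h_z).
\end{equation*}
The three terms are then bounded separately, uniformly in $z$ up to polynomial weights in $\gamma_z$.

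For the initial and integral terms we use Cauchy--Schwarz: $\langle\pi^N_s,G\rangle^2\le \|G\|_\infty^2\frac1N\sum_x\eta_x(sN^2)^2$. The second moment bound \eqref{eq:moment_estimate} of \cref{lem:correlation_estimate} applies, since its hypothesis follows from \eqref{assump:initial_condition_moment}. Together with $\|h_z\|_\infty\le\sqrt2$ and $\|\Delta_N h_z\|_\infty\le C\gamma_z$, this yields
\begin{equation*}
\mathbb E_N\Big[\sup_{t\le T}\Big(\int_0^t\langle\pi^N_s,\Delta_N h_z\rangle ds\Big)^2\Big]\le T\int_0^T\mathbb E_N\big[\langle\pi^N_s,\Delta_N h_z\rangle^2\big]ds\le C(T)\gamma_z^2,
\end{equation*}
and a bound $C$ for the initial term. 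For the martingale, Doob's inequality, \cref{lem:qua_var}, \cref{K-estimate} and again \eqref{eq:moment_estimate} give
\begin{equation*}
\mathbb E_N\big[\sup_{t\le T}M^N_t(h_z)^2\big]\le \frac{C(T)}{N}\|\nabla_N^+h_z\|_\infty^2\le C(T)\gamma_z/N .
\end{equation*}
Summing against $\gamma_z^{-m}$ we get
\begin{equation*}
\mathbb E_N\big[\sup_t\|\pi^N_t\|_{-m}^2\big]\le C\sum_z\gamma_z^{-m}(1+\gamma_z^2+\gamma_z/N).
\end{equation*}
The right-hand side is finite as soon as $\sum_z\gamma_z^{2-m}<\infty$, that is, $2m-4>1$. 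This is exactly where $m>5/2$ enters. Chebyshev's inequality then gives condition (i).

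For condition (ii) we split the sum at a level $M$: $\omega_\delta(\pi^N)^2\le 2\sum_{z}\gamma_z^{-m}\sup_{|t-s|<\delta}|\langle\pi^N_t-\pi^N_s,h_z\rangle|^2$. The tail $|z|>M$ is controlled by $4\sum_{|z|>M}\gamma_z^{-m}\sup_t\langle\pi^N_t,h_z\rangle^2$, whose expectation is at most $C\sum_{|z|>M}\gamma_z^{2-m}$ by the bounds above, and this vanishes as $M\to\infty$ uniformly in $N$. For each fixed $z$ with $|z|\le M$, the integral term has modulus at most
\begin{equation*}
\delta^{1/2}\Big(\int_0^T\langle\pi^N_s,\Delta_N h_z\rangle^2ds\Big)^{1/2},
\end{equation*}
whose second moment is $O(\delta\gamma_z^2)$. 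The martingale term satisfies $\sup_t|M^N_t(h_z)|\to0$ in $L^2$ by \eqref{eq:martingale_vanishment}. Hence for fixed $M$ the finite sum tends to zero as $\delta\to0$ and $N\to\infty$, and condition (ii) follows.

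The main delicate point is ensuring that all constants are uniform in $z$ with explicit polynomial dependence on $\gamma_z$. This requires using \eqref{eq:moment_estimate} uniformly in time inside the time integrals, rather than only at fixed times; Fubini suffices since the bound is a supremum over $t\in[0,T]$. The only place where the lack of a pathwise bound on $\frac1N\sum_x\eta_x^2$ could cause trouble is the sup over time. We avoid needing such a bound by taking expectations of the integrals before taking any supremum, via Cauchy--Schwarz in time as shown above.
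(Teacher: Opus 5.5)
Your proposal is correct and follows essentially the same route as the paper: a mode-by-mode Dynkin decomposition with $G=h_z$, Cauchy--Schwarz in time together with \eqref{eq:moment_estimate} for the initial and integral terms, and Doob's inequality with \cref{lem:qua_var} and \cref{K-estimate} for the martingale. It then sums against $\gamma_z^{-m}$, which is where $m>5/2$ enters, and for condition (ii) uses a tail/finite-mode split in which the integral term carries a factor $\delta$ and the martingale vanishes by \eqref{eq:martingale_vanishment}.

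Your per-mode bound $C(1+\gamma_z^2)$ is slightly more careful than the paper's $Cz^4$. The latter cannot hold at $z=0$, since $\langle\pi^N_t,h_0\rangle$ is the total mass, which is in general nonzero.
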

 
In order to show that the sequence $\{Q^{N}_m\}_{N\in\mathbb N}$ is tight, it suffices to show the conditions \textnormal{(i)} and \textnormal{(ii)} in \cref{lem:tightness_sufficient_condition_sec} in our context.

\begin{lemma} 
\label{lem:HDL_tighness_key_estimate}
There exists some $C=C(T)>0$ such that for every $z\in\mathbb Z$, 
\begin{equation*}
{\limsup
_{N\rightarrow{+\infty}}}\,\,
\mathbb{E}_{N}\Big[\sup_{0\leq{t}\leq{T}}|\langle\pi^{N}_{t},h_{z}\rangle|^{2}\Big] 
\le Cz^4 .
\end{equation*}
\end{lemma}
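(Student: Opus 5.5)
We start from the Dynkin decomposition \eqref{eq:dynkin_volume} with $G=h_z$:
\begin{equation*}
\langle\pi^N_t,h_z\rangle=\langle\pi^N_0,h_z\rangle+\int_0^t D\langle\pi^N_s,\Delta_N h_z\rangle ds+M^N_t(h_z).
\end{equation*}
Using $(a+b+c)^2\le 3(a^2+b^2+c^2)$, we bound $\sup_{0\le t\le T}|\langle\pi^N_t,h_z\rangle|^2$ by three contributions and estimate each in turn.

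For the initial term, Cauchy--Schwarz gives $|\langle\pi^N_0,h_z\rangle|^2\le \|h_z\|_\infty^2\,\frac1N\sum_x\eta_x(0)^2\le \frac{2}{N}\sum_x\eta_x(0)^2$. Its expectation is then bounded uniformly in $N$ by \eqref{assump:initial_condition_moment}.

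For the integral term, Cauchy--Schwarz in time and space yields
\begin{equation*}
\sup_{t\le T}\Big|\int_0^t D\langle\pi^N_s,\Delta_Nh_z\rangle ds\Big|^2\le D^2T\int_0^T \|\Delta_N h_z\|_\infty^2\,\frac1N\sum_x\eta_x(N^2s)^2\,ds.
\end{equation*}
By Taylor's formula, $\|\Delta_N h_z\|_\infty\le \|h_z''\|_\infty\le \sqrt2(2\pi z)^2$, which produces the factor $z^4$. Taking expectations and applying the uniform second moment estimate \eqref{eq:moment_estimate} of \cref{lem:correlation_estimate} (its hypothesis \eqref{assump:initial_condition_correlation} follows from \eqref{assump:initial_condition_moment}, as remarked after that lemma), this contribution is at most $C(T)z^4$.

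For the martingale term, Doob's inequality gives $\mathbb E_N[\sup_{t\le T}M^N_t(h_z)^2]\le 4\mathbb E_N[\langle M^N(h_z)\rangle_T]$. By \cref{lem:qua_var} and \cref{K-estimate}, together with $|\nabla^+_N h_z|\le \|h_z'\|_\infty\le C|z|$, we get
\begin{equation*}
\mathbb E_N[\langle M^N(h_z)\rangle_T]\le \frac{CKz^2}{N}\int_0^T\mathbb E_N\Big[\frac1N\sum_x(\eta_x^2+\eta_{x+1}^2+1)(N^2s)\Big]ds,
\end{equation*}
which is $O(z^2/N)$ by \eqref{eq:moment_estimate} and vanishes in the $\limsup$. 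Combining the three bounds, and absorbing the $z$-independent constant for $z=0$ (for $z\neq0$ we have $1\le z^4$, while for $z=0$ only the initial term survives), gives the claim with $C=C(T)$. The only substantive input is the uniform second moment bound \eqref{eq:moment_estimate}; the remaining steps are routine Taylor and Cauchy--Schwarz estimates. The one point requiring care is that $\Delta_N h_z$ be bounded by $Cz^2$ uniformly in $N$, which follows from the mean value form of the Taylor remainder.
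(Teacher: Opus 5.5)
Your proposal follows the paper's proof essentially step for step: the same Dynkin decomposition with $G=h_z$, the initial term controlled by \eqref{assump:initial_condition_moment}, the integral term by Cauchy--Schwarz together with $\|h_z''\|_\infty^2\lesssim z^4$ and \eqref{eq:moment_estimate}, and the martingale term by Doob's inequality combined with \cref{lem:qua_var} and \cref{K-estimate}. The one caveat, shared with the paper, is that for $z=0$ the conserved mass $\langle\pi^N_t,h_0\rangle$ is of order one and cannot be absorbed into $Cz^4=0$, so the estimate should be read as $C(1+z^4)$; this weaker form is all the subsequent corollary needs when $m>5/2$.
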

\begin{proof}
The proof of this lemma follows from estimating separately each term in the Dynkin martingale with $G=h_z$ given in \eqref{eq:sobolev_CONS_definition}.  
First, note that a simple computation based on a convex inequality together with \eqref{assump:initial_condition_moment}, shows that
\begin{equation*}
\mathbb{E}_{N}
\big[\langle \pi^{N}_{0},h_{z}\rangle^2 \big]
\leq 1 +  
E_{\mu_N} \bigg[\frac{1}{N}\sum_{x\in\mathbb T_N}\eta_x^2 \bigg]
\leq C
\end{equation*}
for some $C=C(T)>0$. 
Similar computations show that the contribution of the martingale also vanishes, by combining Doob's inequality with \eqref{eq:moment_estimate}, see \eqref{eq:martingale_vanishment}.   
Now, we analyze the integral term:
\begin{equation*}
\begin{split}
&\mathbb E_N \bigg[\bigg(\sup_{0\leq{t}\leq{T}}\int_0^t\frac{1}{N}\sum_{x\in\mathbb{T}}
\eta_x(sN^2)\Delta_N h_z(x/N) ds \bigg)^2\bigg] \\
&\quad\le T \mathbb E_N\bigg[ 
\int_0^T \frac{1}{N} \sum_{x\in\mathbb{T}_N}\eta_x(sN^2)^2 
(\Delta_N h_z(x/N))^2 ds  \bigg] \\
&\quad\le T^2 \| h''_z\|^2_{L^\infty(\mathbb T_N)}  
\sup_{0\le t\le T} 
\mathbb E_N\bigg[ \frac{1}{N}\sum_{x\in\mathbb T_N} \eta_x(N^2t)^2 \bigg] 
\le Cz^4  
\end{split}
\end{equation*}
for some $C=C(T)>0$, where we used the moment estimate \eqref{eq:moment_estimate}.  
Hence we conclude with the desired estimate. 
\end{proof}

\begin{coro}
Assume $m>5/2$. Then, 
\begin{equation*}
\limsup_{N\rightarrow{+\infty}}\mathbb{E}_{N}\Big[\sup_{0\leq{t}\leq{T}}\|\pi_{t}^N\|_{-m}^{2}\Big] < \infty 
\end{equation*}
and 
\begin{equation*} 
\lim_{n\rightarrow{+\infty}}  \limsup_{N\rightarrow{+\infty}}\mathbb{E}_{N}\Big[\sup_{0\leq{t}\leq{T}}\sum_{|z|\geq{n}}(\langle \pi_{t}^N,h_{z}\rangle)^{2}\gamma_{z}^{-m}\Big]=0.
\end{equation*}
\end{coro}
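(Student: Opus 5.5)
We will read both claims off the Fourier expansion of the $H^{-m}$ norm, combined with the per-mode bound of \cref{lem:HDL_tighness_key_estimate}. Since $\{h_z\}_{z\in\mathbb Z}$ is an orthonormal basis of $L^2(\mathbb T)$ diagonalising $1-\Delta$ with eigenvalues $\gamma_z=1+4\pi^2z^2$, for any (signed) finite measure $\pi$ on $\mathbb T$ we have
\begin{equation*}
\|\pi\|_{-m}^2=\sum_{z\in\mathbb Z}\gamma_z^{-m}\langle \pi,h_z\rangle^2 .
\end{equation*}
This is the standard identification of $H^{-m}(\mathbb T)$ as the dual of $H^m(\mathbb T)$. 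For the empirical measure $\pi^N_t$ the sum is a well-defined element of $[0,\infty]$, and it will be finite thanks to the estimates below.

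For the first claim, we bound the supremum of the series by the series of suprema:
\begin{equation*}
\mathbb E_N\Big[\sup_{0\le t\le T}\|\pi^N_t\|_{-m}^2\Big]\le \sum_{z\in\mathbb Z}\gamma_z^{-m}\,\mathbb E_N\Big[\sup_{0\le t\le T}\langle\pi^N_t,h_z\rangle^2\Big].
\end{equation*}
To take $\limsup_N$ inside the sum, we need a bound on each term that is uniform in $N$, not only the asymptotic $\limsup$ of \cref{lem:HDL_tighness_key_estimate}. Inspecting its proof gives this directly. The initial term is at most $C$, using $|h_z|\le\sqrt2$ and \eqref{assump:initial_condition_moment}. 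The integral term is at most $CT^2z^4$, by \eqref{eq:moment_estimate} and $\|\Delta_N h_z\|_\infty\le \|h_z''\|_\infty\le Cz^2$. For the martingale term, Doob's inequality together with \cref{lem:qua_var}, \cref{K-estimate} and \eqref{eq:moment_estimate} gives $\mathbb E_N[\sup_t M^N_t(h_z)^2]\le C\,T\,\|\nabla_N^+h_z\|_\infty^2/N\le Cz^2/N$.

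Hence there is $C=C(T)$ such that, for all $N$ and all $z$,
\begin{equation*}
\mathbb E_N\Big[\sup_{0\le t\le T}\langle\pi^N_t,h_z\rangle^2\Big]\le C(1+z^4).
\end{equation*}
Since $\gamma_z^{-m}\sim (4\pi^2)^{-m}|z|^{-2m}$, the series $\sum_z\gamma_z^{-m}(1+z^4)$ converges exactly when $2m-4>1$, i.e. $m>5/2$. This gives the first claim, with a bound uniform in $N$.

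For the second claim, the same argument applied to the tail gives
\begin{equation*}
\sup_N\mathbb E_N\Big[\sup_{t}\sum_{|z|\ge n}\gamma_z^{-m}\langle\pi^N_t,h_z\rangle^2\Big]\le C\sum_{|z|\ge n}\gamma_z^{-m}(1+z^4).
\end{equation*}
The right-hand side is the tail of a convergent series, so it tends to $0$ as $n\to\infty$.

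The only delicate point is the uniformity in $N$ of the per-mode bound, since \cref{lem:HDL_tighness_key_estimate} is stated only as a $\limsup$. We handle it as described, by rerunning its proof with explicit constants; the one new ingredient there is the explicit $z$-dependence of the martingale bound via $\|\nabla^+_Nh_z\|_\infty\le Cz$.
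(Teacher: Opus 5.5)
Your proof is correct and follows the same route as the paper. You expand $\|\pi^N_t\|_{-m}^2=\sum_z\gamma_z^{-m}\langle\pi^N_t,h_z\rangle^2$, bound the supremum of the series by the series of suprema, and combine the $O(z^4)$ per-mode bound from \cref{lem:HDL_tighness_key_estimate} with $\sum_z\gamma_z^{-m}(1+z^4)<\infty$ for $m>5/2$.

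Your extra step of rerunning that lemma's proof to get a per-mode bound $C(1+z^4)$ that is uniform in $N$ is worth keeping. The paper passes $\limsup_N$ through the infinite sum over $z$ without comment, and the lemma as stated (only a $\limsup$ per mode) does not by itself justify that interchange.
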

\begin{proof}
First, let us show the first item. 
Since
\begin{equation*}
\limsup_{N\rightarrow{+\infty}}\mathbb{E}_{N}\Big[\sup_{0\leq{t}\leq{T}}\|\pi^N_{t}\|_{-m}^{2}\Big]
\le \limsup_{N\rightarrow{+\infty}}
\sum_{z\in{\mathbb{Z}}}\gamma_{z}^{-m} \mathbb{E}_{N}\Big[\sup_{0\leq{t}\leq{T}}\langle \pi^N_{t},h_{z}\rangle ^{2}\Big] ,
\end{equation*}
and from \cref{lem:HDL_tighness_key_estimate} the last display is bounded as long as $m>5/2$. 
The second assertion can be shown analogously. 
\end{proof}

Note that condition \textnormal{(i)} in \cref{lem:tightness_sufficient_condition_sec} holds as a consequence of the first assertion of the previous corollary. It remains now to prove the condition~\textnormal{(ii)}, which follows from the next result.

\begin{lemma}
For every $n\in{\mathbb{N}}$ and every $\varepsilon>{0}$,
\begin{equation*}
\lim_{\delta\rightarrow{0}}
\limsup_{N\rightarrow{+\infty}}
\mathbb{P}_N
\bigg(\sup_{\substack{|s-t|<\delta, \\0\leq{s,t}\leq{T}}} \,
\sum_{|z|\leq{n}}(\langle \pi^N_{t}-\pi^N_{s},h_{z}\rangle)^{2}\gamma_{z}^{-m}>\varepsilon\bigg)=0 . 
\end{equation*}
\end{lemma}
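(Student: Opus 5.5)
Since the sum over $|z|\le n$ is finite and $\gamma_z^{-m}\le 1$, it suffices to prove, for each fixed $z$ and every $\varepsilon>0$,
\begin{equation*}
\lim_{\delta\to0}\limsup_{N\to\infty}\mathbb P_N\Big(\sup_{|s-t|<\delta,\,0\le s,t\le T}|\langle\pi^N_t-\pi^N_s,h_z\rangle|>\varepsilon\Big)=0 .
\end{equation*}
A union bound over the finitely many $z$ with $|z|\le n$, together with $\varepsilon$ replaced by $\varepsilon/(2n+1)$, then gives the lemma. For fixed $z$ I use the Dynkin decomposition \eqref{eq:dynkin_volume} with $G=h_z$:
\begin{equation*}
\langle\pi^N_t-\pi^N_s,h_z\rangle=M^N_t(h_z)-M^N_s(h_z)+\int_s^tD\langle\pi^N_r,\Delta_Nh_z\rangle dr .
\end{equation*}
I then control the martingale part and the integral part separately, each at level $\varepsilon/2$.

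For the martingale part, $\sup_{|s-t|<\delta}|M^N_t-M^N_s|\le 2\sup_{0\le t\le T}|M^N_t(h_z)|$. By \eqref{eq:martingale_vanishment} and Chebyshev's inequality, the probability that this exceeds $\varepsilon/2$ tends to $0$ as $N\to\infty$, uniformly in $\delta$. This step uses the uniform second-moment bound \eqref{eq:moment_estimate} of \cref{lem:correlation_estimate}, via \cref{lem:qua_var} and \cref{K-estimate}, which give $\mathbb E_N[\langle M^N(h_z)\rangle_T]\le C(T)\|h_z'\|_\infty^2/N$.

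For the integral part, Cauchy--Schwarz in time gives, for $|t-s|<\delta$,
\begin{equation*}
\Big|\int_s^tD\langle\pi^N_r,\Delta_Nh_z\rangle dr\Big|^2\le D^2\delta\int_0^T\langle\pi^N_r,\Delta_Nh_z\rangle^2dr\le D^2\delta\,\|\Delta_Nh_z\|_\infty^2\int_0^T\frac1N\sum_{x\in\mathbb T_N}\eta_x(rN^2)^2dr .
\end{equation*}
The last step uses $\langle\pi,G\rangle^2\le\|G\|_\infty^2\frac1N\sum_x\eta_x^2$, again by Cauchy--Schwarz. This bound holds uniformly over all pairs $s,t$, so taking expectations and using \eqref{eq:moment_estimate} yields
\begin{equation*}
\mathbb E_N\Big[\sup_{|s-t|<\delta}\Big|\int_s^t\cdots\Big|^2\Big]\le C(T)\,D^2\,\delta\,(2\pi z)^4 .
\end{equation*}
Here $\|\Delta_Nh_z\|_\infty\le\|h_z''\|_\infty\le C z^2$ by Taylor's theorem. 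Markov's inequality then bounds the probability by $C\delta z^4/\varepsilon^2$, which tends to $0$ as $\delta\to0$ uniformly in $N$.

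Combining the two parts finishes the proof. The only nontrivial input is the uniform second-moment bound \eqref{eq:moment_estimate}, which is already available from \cref{lem:correlation_estimate}, so I expect no real obstacle. The one point needing care is to make the supremum over $s,t$ harmless: in the martingale part it reduces to $\sup_t|M_t|$, and in the integral part it is absorbed into the factor $\delta$ by Cauchy--Schwarz before expectations are taken.
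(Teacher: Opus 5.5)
Your proposal is correct and is essentially the paper's argument: you reduce to a single $h_z$, split $\langle \pi^N_t-\pi^N_s,h_z\rangle$ via the Dynkin decomposition \eqref{eq:dynkin_volume}, absorb the supremum over $|s-t|<\delta$ in the integral term into a factor $\delta$ by Cauchy--Schwarz in time together with \eqref{eq:moment_estimate}, and bound the martingale increment by $2\sup_{t\le T}|M^N_t(h_z)|$ using \eqref{eq:martingale_vanishment}. The only slip is in the reduction step, where the per-$z$ threshold should be $\sqrt{\varepsilon/(2n+1)}$ rather than $\varepsilon/(2n+1)$; this is harmless, since your per-$z$ claim holds for every $\varepsilon>0$.
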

\begin{proof}
The lemma follows from showing that
\begin{equation*}
\lim_{\delta\rightarrow{0}}
\limsup_{N\rightarrow{+\infty}}
\mathbb{P}_N \bigg(\sup_{\substack{|s-t|<\delta,\\0\leq{s,t}\leq{T}}}\, 
(\langle \pi^N_{t}-\pi^N_{s},h_{z}\rangle)^{2}>\varepsilon\bigg)=0
\end{equation*}
for every $z\in{\mathbb{Z}}$ and $\varepsilon>0$.
Recall \eqref{eq:dynkin_volume}. 
Note that we need to show that the compensator term and the martingale term vanish. Let us start with the compensator term. Note that, from the Cauchy-Schwarz inequality for the time integral:
\begin{equation*}
\begin{split}
&\mathbb{P}_N \bigg(\sup_{\substack{|s-t|<\delta,\\0\leq{s,t}\leq{T}}}\, 
\int_s^t\frac{D}{N}\sum_{x\in\mathbb T_N } \Big| \Delta_N h_z(\tfrac xN)\eta_x(sN^2)\Big|ds>\varepsilon\bigg)\\
& \le \frac{1}{\varepsilon^2}\mathbb{E}_N \bigg[\sup_{\substack{|s-t|<\delta,\\0\leq{s,t}\leq{T}}}\, 
\Big(\int_s^t\frac{D}{N}\sum_{x\in\mathbb T_N} \Big| \Delta_N h_z(\tfrac xN)\eta_x(sN^2)\Big| ds\Big)^2\bigg] 
\\
&\le \frac{C}{\varepsilon^2}
\mathbb{E}_N \bigg[\sup_{\substack{|s-t|<\delta,\\0\leq{s,t}\leq{T}}}\, 
(t-s)\int_s^t\Big(\frac{D}{N}\sum_{x\in\mathbb T_N} \Big| \Delta_N
h_z(\tfrac xN)\eta_x(sN^2)\Big| \Big)^2ds\bigg]
\end{split}
\end{equation*}
with some $C>0$. 
Apply again the Cauchy-Schwarz inequality to the summation to bound the last term in the last display by
\begin{equation*}
\frac{C\delta }{\varepsilon_n^2 N^2}
\mathbb{E}_N \bigg[ \int_0^T
\sum_{x\in\mathbb T_N} 
(\Delta_Nh_z(\tfrac xN))^2
\sum_{y\in\mathbb T_N}\eta_y(sN^2)^2 ds\bigg] ,
\end{equation*}
which vanishes as $\delta\to0$, from \eqref{eq:moment_estimate}. 
Hence it remains to show the assertion for the martingale part.
This follows from the next claim: for any function $G\in{C^\infty(\mathbb{T})}$ and  for every $\varepsilon>0$, 
\begin{equation*}
\lim_{\delta\rightarrow{0}}
\limsup_{N\rightarrow{+\infty}}
\mathbb{P}_N \bigg( \sup_{\substack{|s-t|<\delta,\\0\leq{s,t}\leq{T}}} \,
|M_{t}^N(G)-M_{s}^N(G)|>\varepsilon \bigg) =0.
\end{equation*} 
This claim follows from the following estimate: 
\begin{equation*}
\begin{aligned}
\mathbb{P}_N \bigg( \sup_{\substack{|s-t|<\delta,\\0\leq{s,t}\leq{T}}} \,
|M_{t}^N(G)-M_{s}^N(G)|>\varepsilon \bigg)
&\le 2\mathbb{P}_N \bigg( \sup_{t\in[0,T]} \,
|M_{t}^N(G)|>\frac{\varepsilon}{2} \bigg)\\
&\le \frac{8}{\varepsilon^2}\mathbb{E}_N \bigg[\sup_{t,\in[0,T]} \,
|M_{t}^N(G)|^2\bigg]. 
\end{aligned}
\end{equation*}
However, from \eqref{eq:martingale_vanishment}, the utmost right-hand side of the last display vanishes as $N\to+\infty$, and thus we complete the proof.   
\end{proof}

\subsection{Absolute continuity}
As a consequence of the previous subsection, we know that the sequence $\{Q^N_m\}_{N\in\mathbb N}$ has a limit point $Q_m$, by taking a subsequence if necessary.  
Here, we can show that the limit points $Q$ are concentrated on possibly signed measures which are absolutely continuous with respect to the Lebesgue measure. 
Indeed, it easy to see that, by the second-moment bound \eqref{eq:moment_estimate}, for any limit $\pi_\cdot\in D([0,T],H^{-k}(\mathbb T))$, the functional $\pi^N_t$, for each $t\in[0,T]$, can be regarded as a signed measure by extending the domain of test functions.  
Moreover, again from \eqref{eq:moment_estimate}, we can show that Fourier series of the measure $\pi_t(du)$ is in $\ell^2(\mathbb Z)$ a.s. for a.e. $t \in [0,T]$.  
Therefore, following the argument done in \cite[Section 4]{suzuki1993hydrodynamic}, see also \cite[Lemma 3.12]{gonccalves2025stochastic}, we can show the desired assertion, that is,  
\begin{equation*}
Q_m\big( \pi :
\pi_t(u)=\rho(t,u)du \, \text{ a.e.}\, t\big) = 1 . 
\end{equation*}

\subsection{Uniqueness of the weak solution}
Up to now, we know that every limit points of the sequence $\{Q^N_m\}_{N\in\mathbb N}$ are concentrated on absolute continuous trajectories satisfying the weak form of the heat equation \eqref{eq:HDL_heat_equation}: 
\begin{equation*}
Q_m \bigg(\pi : 
\langle \pi_t,G\rangle 
= \langle \pi_0,G\rangle 
+ \int_0^t D\langle\pi_s,G\rangle ds 
\bigg)
= 1 . 
\end{equation*}
Moreover, it is not hard to show that the density $\rho$ is in the space $L^2([0,T]\times \mathbb T)$. 
Under the condition $\|\rho\|_{L^2([0,T]\times \mathbb T)}<+\infty$, 
the weak solution of the heat equation is unique, see~\cite[Section A.2.4]{kipnis1999scaling}.
This allows us to take the full sequence and thus we conclude the desired convergence of $\{Q^N_m\}_{N\in\mathbb N}$.

\appendix
\section{Construction of the dynamics with redistribution interaction}
\label{sec:construction_dynamics}
We comment here on the construction of the dynamics. 
Recall that under Assumption \textnormal{(A4)}, $\nu_\rho$ denotes a product measure whose common marginal is an natural exponential family: $\nu_\rho(d\eta_x)=(1/ Z_{\lambda})e^{\lambda \eta_x} d\nu^o(\eta_x)$ for each $x\in\mathbb T_N$ with some Stieltjes measure $d\nu^o$, and the chemical potential $\lambda=\lambda(\rho)$ is chosen in such a way that $E_{\nu_\rho}[\eta_0]=\rho$. 
Let $p:S\to\mathbb R$ be the probability density function of the one-site marginal distribution of $\nu_\rho$. 
Then the linear operator $L^{\mathrm{Red}}$ defined in \eqref{eq:generator_redistribution} is written with conditional expectation: for each $f\in C(\mathscr X_N)$
\begin{equation}
\label{redistrib-gen}
L^{\mathrm{Red}} f(\eta)
= \frac{1}{2} 
\sum_{x,y \in\mathbb T_N, \, |x-y|=1}
\big( 
 E[f|\eta_x+\eta_y]
 - f(\eta) \big) .
\end{equation}
Here, the conditional expectation is taken with respect to the measure on $S\times S$ whose common marginal is given as that of $\nu_\rho$, so that the law of $\eta_x$ and $\eta_y$ is $\nu_\rho$ for both of them. 
We will denote by $\mathcal D(\mathcal L)$ and $\mathcal R(\mathcal L)$ be the domain and range of a linear operator $\mathcal L$. 
Now, let us recall from \cite{liggett2010continuous} the definition of the probability generator on given in general a state-space $\Omega$, which is locally compact. 

\begin{defin}
\label{def:generator_general}
A probability generator is a linear operator $\mathcal L$ on $C(\Omega)$ satisfying the following properties: 
\begin{itemize}
\item[\textnormal{(a)}] The domain $\mathcal D(\mathcal L)$ is dense in $C_0(\Omega)$. 
\item[\textnormal{(b)}] If $f\in \mathcal D(\mathcal L)$, $\lambda\ge0$ and $g=f-\lambda \mathcal Lf$, then,
\begin{equation*}
\inf_{\eta\in \Omega} f(\eta) \ge \inf_{\eta\in \Omega} g(\eta).  
\end{equation*}
\item[\textnormal{(c)}] $\mathcal R(\lambda-\mathcal L)$ is dense in $C_0(\Omega)$ for all sufficiently small $\lambda>0$. 
\item[\textnormal{(d)}] If $\Omega$ is compact, $1\in \mathcal D(\mathcal L)$ and $\mathcal L1=0$. On the other hand, if $\Omega$ is not compact, for small $\lambda >0$ there exists a sequence $\{f_n\}_{n\in\mathbb N}\subset  \mathcal D(\mathcal L)$ such that $g_n=f_n-\lambda \mathcal Lf_n$ satisfies $\sup_{n}\|g_n\|<+\infty$, and both $f_n$ and $g_n$ converge to 1 pointwise.   \end{itemize}
\end{defin}

Now we check that the operator given in \eqref{redistrib-gen} is a probability generator according to \cref{def:generator_general}.  
Since \eqref{eq:generator_redistribution} is a bounded operator on $C(\mathscr X_N)$, by \cite[Proposition 3.22]{liggett2010continuous}, the condition (c) above immediately follows. 
Let us check (b) and (d). 
To see (d), it is enough to take 
\begin{equation*}
f_n(\eta) 
= \min \Big\{ 1 , \frac{n}{\|\eta\|}\Big\}. 
\end{equation*}
Then, we have $f_n,g_n\to 1$ pointwise and $\|g_n\|$ is uniformly bounded since 
\begin{equation*}
\|g_n\| \le (1+2\lambda N)\|f_n\| .
\end{equation*}
Above we used the fact that our generator is in the form given in \eqref{redistrib-gen}.  
To show (b), let us assume $\mathscr X_N$ is compact.
Then, note that $\inf_{\eta\in \mathscr X_N}f(\eta) \le 0$ due to the fact that $f\in C(\mathscr X_N)$ is decaying at infinity.
If $\inf_{\eta}f(\eta)=0$, since 
\begin{equation*}
g(\eta)
= f(\eta)- \lambda L^{\mathrm{Red}} f(\eta)
= (1+\lambda)f(\eta) 
- \lambda \sum_{x\in\mathbb T_N}
E[f|\eta_{x}+\eta_{x+1}]
\le (1+\lambda)f(\eta), 
\end{equation*}
we have $\inf_\eta g(\eta)\le 0$. 
On the other hand, if $\inf_\eta f(\eta)<0$, the infimum is attained at some point $\eta^*\in \mathscr X_N$. 
Then, 
\begin{equation*}
\begin{aligned}
g(\eta^*)
&= f(\eta^*) - \lambda L^{\mathrm{Red}} 
f(\eta^*)\\
&= f(\eta^*) 
- \lambda \sum_{x\in\mathbb T_N} 
\big( E[f|\eta_x+\eta_{x+1}](\eta^*) - f(\eta^*)\big)
\le f(\eta^*)
= \inf_{\eta\in\mathscr X_N} f(\eta). 
\end{aligned}
\end{equation*}
Now, if $\mathscr X_N$ is compact, the infimum of $f$ is attained at some point, so that the argument in the last display immediately verifies the condition (b). 
Hence we could check all items of \cref{def:generator_general} and thus the desired dynamics is constructed, according to \cite[Theorem 3.24]{liggett2010continuous}.

\section{Verification of the martingale properties}
\label{sec:degree-preserving_verification} 
Here we show that all the microscopic models that we listed as examples satisfy the assumptions \textnormal{(A6.1)} and \textnormal{(A6.2)} on the degree preservation listed in Assumption \ref{assumption:second_order_polymonial}, especially the martingale property in the assumptions. 
Recall that we introduced the processes $M_f=(M_f(t):t\ge0 )$ and $N_f=(N_f(t):t\ge0)$ by \eqref{eq:polynomial_martingale} and \eqref{eq:polynomial_martingale_qv}, respectively. 
Take any polynomial $f\in \mathcal P_2$. 
Hereinafter let us focus on the proof of $M_f$ being a martingale, since the assertion for $N_f$ can be shown analogously. 
To show that $M_f$ is a martingale, we need to take a sequence $\{f_n\}\subset C(\mathscr X_N)$. 
Let $\{\mathscr F_t:t\ge 0\}$ be the natural filtration of the process $\{\eta(t):t\ge 0\}$.  
Note that by Dynkin's martingale formula, we know that the process $M_{f_n}$ is a martingale.
Thus, for any $A\in \mathscr F_s$ and for any $0 \leq s \leq t$,  
\begin{equation*}
\mathbb E_N[M_{f_n}(t)\mathbf{1}_A]
= \mathbb E_N[M_{f_n}(s)\mathbf{1}_A]. 
\end{equation*}
Therefore, to show that $M_f$ is a martingale, it is enough to seek for a sequence $\{f_n\}_{n\in\mathbb N}$ such that $M_{f_n}\to M_f$ almost surely, and $\{M_{f_n}\}_{n\in\mathbb N}$ is uniformly integrable. 
To this end, notice that we may only show the assertion for 
\begin{equation}
\label{eq:martingale_function_domination}
f(\eta)
= \sum_{x\in\mathbb T_N} \eta_x^2. 
\end{equation}
In particular, we can show some uniform bound on (the expectation of) this function with the help of Gronwall's inequality. 
For the other cases, approximation by elements in $C(\mathscr X_N)$ can be constructed analogously to the previous case, whereas to derive the uniform bound, we use the fact that the function $f$ given above dominates any function in the set $\mathcal P_2$ up to a constant shift and multiplication.    
In what follows, we take $f$ to be the one in \eqref{eq:martingale_function_domination}, and, to make use of some model-wise properties, we split the proof into three cases in the following way. 

\subsection{Case I: Non-negative state-space}
First, let us consider the case where the state-space is non-negative: $S\subset [0,+\infty)$. 
An idea for this case is to use the conservation law. 
Let $f_n\in C(\mathscr X_N)$ be defined by 
\begin{equation*}
f_n(\eta)
= f(\eta)
\Theta_n\Big(\sum_{x\in\mathbb T_N}\eta_x\Big)
\end{equation*}
where $\Theta_n:[0,+\infty)\to [0,1]$ is a bounded smooth function such that $\Theta_n(r) =1$ if $r\le n$ whereas $\Theta_n(r)=0$ if $r\ge n+1$. 
Now, note that since $L:\mathcal P_2\to \mathcal P_2$, we have the bound 
\begin{equation*}
Lf_n(\eta)
\le K_1 f_n(\eta) + K_2
\end{equation*}
for some $K_1,K_2\ge 0$ which are independent of $n$, where we used the conservation law when computing the action of the generator.  
By Dynkin's martingale formula, we have that 
\begin{equation*}
\mathbb E_N[f_n(\eta(t))]
\le \mathbb E_N[f_n(\eta(0))]
+ \int_0^t \big( K_1 \mathbb E_N[f_n(\eta(s))] + K_{2}\big) ds ,
\end{equation*}
which shows that $\mathbb E_N[f_n(\eta(t))]$ is bounded uniformly in $n$ since from \eqref{assump:initial_condition_moment} we know that this bound holds at time $t=0$.   
Hence, the proof ends since the function $f$ defined in \eqref{eq:martingale_function_domination}is in $L^1(\mathbb P_N)$ and it dominates all functions in $\mathcal P_2$.

\subsection{Case II: Continuous state-space} 
Next, let us consider the case $S=\mathbb R$ and the process $(\eta(t):t\ge 0)$ has continuous trajectories, i.e. when it is given as an interacting diffusion. 
Let 
\begin{equation*}
\sigma_n
=\inf\bigg\{ t\ge 0 ; \sum_{x\in\mathbb T_N} \eta_x(t)^2 \ge n \bigg\}.
\end{equation*}
Moreover, let 
\begin{equation*}
f_n(\eta) 
= f(\eta) 
\Theta_{n+1}\Big( \sum_{x\in\mathbb T_N} \eta_x^2\Big) 
\end{equation*}
where recall that the function $\Theta_n$ is the same as in Case I. 
Here note that we took the cutoff at the value $n+1$, in order for the identity $Lf(\cdot)= Lf_n(\cdot)$ to be true until the random time $\sigma_n$. 
Then, $f_n\in C(\mathscr X_N)$ for each $n\in\mathbb N$.  
Moreover, we know that $M_{f_n}$ converges almost surely to $M_f$, and again by Dynkin's martingale formula, the process 
\begin{equation*}
M^{\sigma_n}_{f_n}(t)
\coloneqq f_n(\eta(t\wedge \sigma_n))
- f_n(\eta(0))
-\int_0^{t\wedge \sigma_n} 
L f_n(\eta(s)) ds 
\end{equation*}
is a martingale, since $\sigma_n$ is a stopping time.  
Then, we can conduct a similar argument as in Case I to show that, with the help of Gronwall's inequality, $\mathbb E_N[f_n(\eta(t\wedge \sigma_n))]$ is uniformly bounded in $n$,  so that we have $\mathbb E_N[f(\eta(t))] 
= \lim_{n\to\infty} 
\mathbb E_N[f_n(\eta ( t \wedge \sigma_n))]
<+\infty$. 
Hence, the function $f$ defined in \eqref{eq:martingale_function_domination} is integrable, and we complete the proof for this case.

\subsection{Case III: Redistribution-type interaction} 
Finally, let us focus on the case for models with redistribution-type interaction. 
Define for any $m,n\in\mathbb N$ satisfying $m>n$, 
\begin{equation*}
f^m_n(\eta)
= \begin{cases}
    \begin{aligned}
    & f(\eta) && \text{ if } f(\eta) \le n, \\
    & n\frac{m-f(\eta)}{m-n} && \text{ if } n \le f(\eta) \le m, \\
    & 0 && \text{ if } f(\eta) \ge m .
    \end{aligned}
\end{cases}
\end{equation*}
Here, note that $f^m_n(\eta)\uparrow f(\eta)\wedge n \eqqcolon f_n(\eta)$ as $m\to\infty$ almost surely and $f_n$ is not included in the space $C(\mathscr X_N)$, though $f^m_n\in C(\mathscr X_N)$ for any fixed $m$.   
Now, we claim that the bound holds: 
\begin{equation}
\label{eq:redistirbution_dominating_function_bound}  
|Lf^m_n(\eta)| 
\le K_1f_n(\eta)+ K_2
\end{equation}
with some $K_1,K_2\ge 0$ which do not depend on $m,n$.  
Indeed, note that 
\begin{equation*}
\begin{aligned}
Lf^m_n(\eta)
&=(1/2) \sum_{x,y\in\mathbb T_N, \, |x-y|=1}
\int_S 
q_\alpha(\eta_x,\eta_y) \big( f^m_n(\eta -\alpha \delta_x + \alpha \delta_y)-f^m_n(\eta) \big)d\alpha\\
&\le (1/2) \sum_{x,y\in\mathbb T_N, \, |x-y|=1}
\int_S 
q_\alpha(\eta_x,\eta_y) \big( f(\eta -\alpha \delta_x + \alpha \delta_y)-f^m_n(\eta) \big)d\alpha\\
&\le Lf(\eta) + N\big(f(\eta) - f^m_n(\eta)\big)
\end{aligned}
\end{equation*}
where in the last estimate, we added and subtracted $f(\eta)$ inside the integral and used the normalization condition of the rate $q_\alpha$. 
Therefore, since the function $f^m_n$ is non-negative, and the action $L$ on $f$ is again degree-two, we have the bound 
\begin{equation*}
Lf^m_n(\eta)
\le K_1f(\eta)+ K_2
\end{equation*}
with some $K_1,K_2\ge 0$. 
Note that when $f(\eta)\leq n$ we have $f_n(\eta)=f(\eta)$, and then the last inequality reads $
Lf^m_n(\eta)
\le K_1f_n(\eta)+ K_2$. 
On the other hand, we note that the non-negativity of $f^m_n$ and the fact that $f^m_n\le n$ yields the bound 
\begin{equation*}
Lf^m_n(\eta)
= (1/2) \sum_{x,y\in\mathbb T_N,\, |x-y|=1}  
\int_S
q_\alpha(\eta_x,\eta_y)\big( 
f^m_n(\eta- \alpha \delta_x + \alpha \delta_y) - f^m_n(\eta) \big)
d\alpha
\le \widetilde K_1 n
\end{equation*}
with another $\widetilde K_1\ge 0$. Again we note that when $f(\eta)\geq n$, we have $f_n(\eta)=n$ and therefore, the last bound reads $
Lf^m_n(\eta)
\le \widetilde K_1 f_n(\eta)$. These two observations 
 give the upper bound of \eqref{eq:redistirbution_dominating_function_bound}.
Analogously, we can bound $Lf^m_n(\eta)$ from below by $-(K_1 f_n(\eta) + K_2)$ and thus we obtain the claim \eqref{eq:redistirbution_dominating_function_bound}. 
Now, by Dynkin's martingale formula, we have that 
\begin{equation*}
\begin{aligned}
\mathbb E_N[f^m_n(\eta(t))]
&= \mathbb E_N[f^m_n (\eta(0))]
+ \mathbb E_N\bigg[ \int_0^t Lf^m_n(\eta(s))ds \bigg] \\
&\le \mathbb E_N[f^m_n(\eta(0))] 
+ \int_0^t \big( K_1 \mathbb E_N[f_n(\eta(s))] 
+ K_2 \big) ds.
\end{aligned}
\end{equation*}
Hence, by taking $m\to \infty$ by the monotone convergence theorem, and then applying Gronwall's inequality, we conclude with the uniform bound $\sup_{t \in [0,T]}\sup_{n\in\mathbb N} \mathbb E_N[f_n(\eta(t))] < +\infty$. 
Now, take the limit $n\to\infty$, again by the monotone convergence theorem to deduce the desired bound for the function $f$ and we complete the proof for redistribution models.

\subsection*{Acknowledgments}
P.G. and K.H. thank the hospitality of Universidade do Minho for their research visit during the period of September 2025 when part of this work was developed.   
K.H. is supported by KAKENHI 25K23337.  
P.G. and C.F. thank the hospitality of the University of Tokyo and the University of Osaka for their research stay during the period of October 2025 when part of this work was developed.
C.F. acknowledges support from the FAR UniMoRe project CUP-E93C25002460005.
P.G. thanks Fundação para a Ciência e Tecnologia
FCT/Portugal for financial support through the projects UIDB/04459/2020 and UIDP/04459/2020 and ERC-FCT.
M.S. is supported by KAKENHI 24K21515.

\subsection*{Data Availability}
No datasets were generated or analyzed during the current study. 

\subsection*{Conflict of Interests}
The authors declare that they have no known competing financial interests or personal relationships that could have appeared to influence the work reported in this paper.

\bibliographystyle{abbrv}
\bibliography{ref}

\end{document}